\documentclass{amsart}
\usepackage{amsmath,amsfonts,amssymb,amscd,amsthm,color}
\usepackage{hyperref}
\usepackage{graphicx}
\usepackage{amsmath,amsfonts,amssymb,amscd,amsthm,color}
\usepackage{rotating}
\usepackage{vmargin}
\newtheorem{theorem}[equation]{Theorem}
\newtheorem{lemma}[equation]{Lemma}
\newtheorem{proposition}[equation]{Proposition}
\newtheorem{definition}[equation]{Definition}
\newtheorem{corollary}[equation]{Corollary}
\newtheorem{scholium}[equation]{Scholium}
\newtheorem*{property}{Property ($*$)}
\theoremstyle{remark}
\newtheorem{remark}[equation]{Remark}
\numberwithin{equation}{section}

\newcommand\ad{{\text{ad}}}
\newcommand\sco{{\text{sc}}}
\DeclareMathOperator\codim{codim}
\DeclareMathOperator\End{End}
\DeclareMathOperator\Gl{\text{GL}}
\DeclareMathOperator\IC{IC}
\DeclareMathOperator\Id{Id}
\DeclareMathOperator\Ind{Ind}
\DeclareMathOperator\Irr{Irr}
\DeclareMathOperator\Proj{Proj}
\DeclareMathOperator\PSO{\text{PSO}}
\DeclareMathOperator\PSp{\text{PSp}}
\DeclareMathOperator\Orth{\text{O}}
\DeclareMathOperator\rank{rank}
\DeclareMathOperator\reg{\text{reg}}
\DeclareMathOperator\Res{Res}
\DeclareMathOperator\Sh{\text{Sh}}
\DeclareMathOperator\SL{\text{SL}}
\DeclareMathOperator\SO{\text{SO}}
\DeclareMathOperator\Spin{\text{Spin}}
\DeclareMathOperator\Sp{\text{Sp}}
\DeclareMathOperator\supp{supp}
\DeclareMathOperator\Sym{Sym}
\DeclareMathOperator\wf{wf}

\newcommand\BF{{\mathbb F}}
\newcommand\BQ{{\mathbb Q}}
\newcommand\BZ{{\mathbb Z}}
\newcommand\BN{{\mathbb N}}

\newcommand\bG{{\bf G}}
\newcommand\bL{{\bf L}}
\newcommand\bT{{\bf T}}
\newcommand\ci{{\iota}}
\newcommand\cj{{\gamma}}
\newcommand\ck{{\kappa}}
\newcommand\cl{{\lambda}}

\newcommand\Sgot{{\mathfrak S}}
\newcommand\CA{{\mathcal A}}
\newcommand\CC{{\mathcal C}}
\newcommand\CE{{\mathcal E}}
\newcommand\CF{{\mathcal F}}
\newcommand\CG{{\mathcal G}}
\newcommand\CI{{\mathcal I}}
\newcommand\CL{{\mathcal L}}
\newcommand\CM{{\mathcal M}}
\newcommand\CP{{\mathcal P}}
\newcommand\CS{{\mathcal S}}
\newcommand\CT{{\mathcal T}}
\newcommand\CX{{\mathcal X}}
\newcommand\CY{{\mathcal Y}}
\newcommand\CZ{{\mathcal Z}}
\newcommand\GF{{\bG^F}}
\newcommand\WGL{{W_\bG(\bL)}}
\newcommand\Qlbar{{\overline\BQ}_\ell}
\newcommand\Fq{{\BF_q}}
\newcommand\Fqbar{{\overline\BF}_q}
\newcommand{\inv}{^{-1}}
\newcommand{\uni}{{\text{uni}}}
\newcommand\GFuni{{\bG^F_\uni}}
\newcommand{\lf}{Lusztig function}
\renewcommand{\ggg}{generalised Gelfand-Graev character}
\newcommand\wavefront{wave front set}
\newcommand{\cfs}{\hat\bG^F}
\newcommand{\cf}{{\it cf.}}
\newcommand{\ie}{{\it i.e.}}
\newcommand{\be}{\begin{equation}}
\newcommand{\ee}{\end{equation}}
\newcommand{\lr}{\longrightarrow}
\newcommand{\wt}{\widetilde}
\newcommand{\ol}{\overline}
\newcommand{\ot}{\otimes}
\newcommand{\ve}{\varepsilon}
\newcommand{\vp}{\varphi}
\newcommand{\scal}[3]{{\langle\,#1,#2\,\rangle_{#3}}}
\newcommand\lexp[2]{\kern\scriptspace\vphantom{#2}^{#1}\kern-\scriptspace#2}
\makeatletter
\@namedef{subjclassname@2010}{%
  \textup{2010} Mathematics Subject Classification}
\makeatother

\sloppy

\begin{document}

\renewcommand{\thefootnote}{\fnsymbol{footnote}}

\footnotetext{Received March 13, 2014.}

\footnotetext{2010 {\it Mathematics Subject Classification.}{Primary 20C33; secondary 20G05, 20G40.}}

\title[On Character Sheaves and Characters of Reductive Groups...]{On Character Sheaves and Characters of Reductive Groups at Unipotent Classes}

\author[Fran\c cois~Digne, Gustav~Lehrer, and Jean~Michel]{Fran\c cois~Digne, Gustav~Lehrer, and Jean~Michel}

\begin{abstract}
\noindent{\bf Abstract:} With a view to determining character values of finite reductive groups at unipotent
elements, we prove a number of results concerning inner products of \ggg s
with characteristic functions of character sheaves, here called \lf s.
These are used to determine projections of \ggg s
to the space of unipotent characters, and to the space of characters with a given \wavefront.
Such projections are expressed largely in terms of Weyl group data.
We show how the values of characters at their unipotent support or \wavefront\
are determined by such data. In some exceptional groups we show that the projection
of a \ggg\ to a family with the same \wavefront\ is (up to sign) the dual of a Mellin transform.
Using these results,
in certain cases we are able to determine roots of unity which relate
almost characters to the characteristic functions. In particular we show how to compute
the values of all unipotent characters at all unipotent classes for the exceptional
groups of type $G_2$, $F_4$, $E_6$, $\lexp 2E_6$, $E_7$ and $E_8$ by a method different from
that of \cite{CV,K2}; we therefore require weaker restrictions on $p$ and
$q$. We also provide an appendix which
gives a complete list of the cuspidal character sheaves on all quasi-simple groups.
\\
\noindent{\bf Keywords:} Reductive group, character sheaf, Gelfand-Graev.
\end{abstract}

\maketitle

\section{Introduction} Let $\bG$ be a connected reductive algebraic group over a field
of positive characteristic $p$. We shall generally assume that $p$ is ``sufficiently large'',
which will often mean ``larger than the Coxeter number of the associated Weyl group''.
Let $F$ be a Frobenius morphism defining a rational structure on $\bG$ over the finite
extension $\Fq$ of the finite field $\BF_p$ with $p$ elements. We shall be be concerned with
the irreducible characters of the finite group of fixed points $\GF$ over the field $\Qlbar$,
where $\ell$ is a prime different from $p$.
The purpose of this work is to contribute to the determination of
the values of the irreducible characters of $\GF$ at unipotent elements of $\GF$.
In addition, we include, as an appendix (\S\ref{s:classcs})
a classification of the cuspidal character sheaves for quasi simple groups $\bG$, which
is complete up to a small number of ambiguities. This classification is essentially
due to Lusztig \cite{CS}, but we provide a list, conveniently arranged,
of cuspidal character sheaves for each isogeny type of quasi-simple group.
Specifically, we give a discussion of the
series of classical type in Appendix \S\ref{s:classcl},
and tables for the exceptional groups in Appendix \S\ref{s:classex}.

The irreducible characters of $\GF$ are partitioned into subsets in various ways.
The cuspidal character sheaves on Levi subgroups of $\bG$
 lead to a classification into ``Harish-Chandra type'' series,
whose constituents are labelled by (twisted) characters of an appropriate Coxeter group.
Each character of $\GF$ has a \wavefront\ and a unipotent support, both of which are
(geometric) unipotent conjugacy classes of $\bG$, and belongs to a `Lusztig series'.
The Lusztig series are further partitioned into families. Our results relate principally to
certain classes of characters which are described in terms of these partitions.

The notion of determination of a value requires explanation.
Lusztig has shown that the space $\CC(\GF)$ of class functions on $\GF$ has an
orthonormal basis consisting of the
characteristic functions of $F$-invariant simple $\bG$-equivariant perverse
sheaves on $\bG$. Such functions will be referred to as \lf s, and we consider their
values known by the work of Lusztig \cite{CS}.
Further, it was shown in \cite{Shoji1,Shoji2,Shoji3,Shoji4,B,W} that with certain qualifications, the \lf s
coincide with `almost characters' (see \S\ref{s:fourier}) up to multiplication by a
root of unity. Since the transition from almost characters to irreducible
characters is known, it follows that determination of this root of unity implies the determination
of the values of certain characters.

Our main results are as follows.

In  \S\ref{s:cf-ggg} we complement \cite{DLM3}  by giving an expression for
the  characteristic function  of an  arbitrary unipotent  class in  terms of
duals  of generalised Gelfand-Graev characters, and as a consequence deduce
some  results concerning the  support of these  duals. The special cases of
the regular and subregular classes are spelled out explicitly. We also give
a  formula  for  the  value  of  any irreducible character at any element of its unipotent
support.
In \S\ref{s:wavefront},
we provide some background on character sheaves and families, and define Lusztig series
and the \wavefront\ in a way suitable for our purpose. In \S\ref{ss:princbl} we determine
the multiplicity of an irreducible character $\chi$ of $\GF$ in the
Mellin transform $\Gamma_\ci$ of the \ggg s,
when $\ci$ is in the principal series, in terms of the Lusztig series of
$\chi$. This is applied to special cases such as subregular $\chi$, where
explicit formulae may be given.

Section \ref{s:resnil-cs} provides a general study of the restriction
to the unipotent set of
\lf s, which are defined as characteristic functions of $F$-stable character sheaves,
and applications to the determination of various
multiplicities.
The restriction to $\GFuni$ of the \lf \;$\chi_{E,\phi_E}$ (see \S\ref{ss:cf})
is given (Theorem \ref{prop:reslf} and Lemma \ref{lem:wf-cs})
in terms of Weyl group data and Green functions.
Item (ii) of Theorem \ref{prop:reslf} has also been obtained by Jay Taylor
in \cite{Taylor2}.

This is applied in Theorem
\ref{thm:gamma-cs} to give the inner product of a \lf\ with a \ggg.
These results are applied to prove vanishing theorems for inner products,
and in Corollary \ref{sch:gggch} to give an explicit expression for the projection of
$\Gamma_\ci$ to the space spanned by the characters with given \wavefront\
in terms of Weyl group data. In \S\ref{s:ls} this is applied to show that
when the \wavefront\ above is the support of $\ci$, then the projection
to the unipotent characters of the above projection is precisely a \lf, up to
sign. This explains phenomena which had been observed in several examples earlier.
We also show how the inner product of any irreducible character with $\Gamma_\ci$
may be expressed in terms of its inner products with \lf s.

Section \ref{s:fourier} treats a special situation which applies in the cases
when $\bG$ is of type $G_2,F_4$  or $E_8$.
In this situation, we are able to determine the root of unity which relates the almost characters
to the \lf s, by an analysis which uses the Mellin transforms (see Definition \ref{def:mellin})
of the characters in a family. Specifically we compute ({\cf} Theorem \ref{prop:projgamma-ac})
the projection of a \ggg\
to the space space spanned by a family with the same wave front set. In the last paragraph
of \S\ref{ss:setup} it is explained how this permits the computation of character values.

Finally, as mentioned above, in Appendices \ref{s:classcs},\ref{s:classcl} and
\ref{s:classex}, we present a complete classification of the cuspidal character
sheaves on all quasi-simple groups $\bG$.

\section{Characteristic functions and generalised Gelfand-Graev Characters.}\label{s:cf-ggg}

We maintain the notation of \cite{DLM3}, which we now briefly review.
Consider pairs $\ci=(C,\zeta)$, where $C$ is a
unipotent   class  of  $\bG$   and $\zeta$ is a  $\bG$-equivariant  irreducible
$\Qlbar$-local  system on $C$; $C$ is said to be the {\em support} of the pair and
may also be written $C_\ci$.
Set $a_\ci=|A(u)| $ for $u\in C$, where $A(u)=C_\bG(u)/C_\bG^0(u)$. Each
such  pair belongs to a cuspidal pair on  a Levi subgroup $\bL$ of $\bG$, and
all  pairs belonging to a  given cuspidal system  form a {\em  block} $\CI$; we
also denote $\bL$  by $\bL_\CI$.  When $\CI$ is $F$-stable we may choose
the cuspidal data to be $F$-stable and we let $\eta_\bL=(-1)^{\text{semisimple
$\Fq$-rank  of $\bL$}}$.

Suppose $X$ is an algebraic variety over $\Fqbar$, with an $\Fq$-structure embodied in
the Frobenius morphism $F:X\to X$. If $\chi$ is a complex of $\Qlbar$-sheaves on $X$
and we are given an isomorphism $\phi:F^*\chi\overset{\sim}{\lr}\chi$, then this data
permits the definition (see \cite[\S 2]{DLM3}) of a function $c_\phi:X^F\to \Qlbar$,
called the {\it characteristic function} of the pair $\chi,\phi$, where as usual, $X^F$ denotes
the set of $F$-stable points of $X$.
For an $F$-stable pair $\ci$, we denote  by $\CY_\ci$ the
characteristic function of
$\zeta$  and by $\CX_\ci$  the  characteristic  function of the corresponding
intersection  cohomology complex. Writing $c_\ci=\frac 12(\codim C-\dim
Z_\bL)$,  we define the normalisations $\wt\CY_\ci=q^{c_\ci}\CY_\ci$
and   $\wt\CX_\ci=q^{c_\ci}\CX_\ci$ of $\CY_\ci$ and $\CI_\ci$ respectively.
Define a  partial  order on pairs by stipulating
$\ci\le\ck$ if  the  pairs  are  in  the  same  block  and
$C_\ci\subset\overline C_\ck$. We have $\CX_\ci=\sum_{\ck\le\ci}
P_{\ck,\ci}\CY_\ck$    for    some    $P_{\ck,\ci}\in\BZ[q]$;   we   define
$P_{\ck,\ci}=0$  when $\ck\not\le\ci$,  and the  normalised version $\tilde
P_{\ck,\ci}=q^{c_\ci-c_\ck}P_{\ck,\ci}$ so that
$\wt\CX_\ci=\sum_{\ck\le\ci} \wt P_{\ck,\ci}\wt\CY_\ck$.

The fixed point set $C^F$ splits into $\GF$-classes indexed by the set
$H^1(F,A(u))$ of $F$-classes in $A(u)$, where $u$ is any (chosen) element of $C^F$.
For   $a\in  A(u)$   we  denote   by  $u_a$  a
representative  of the  $\GF$-class defined  by the  $F$-class of  $a$, and
denote  by $\Gamma_{u_a}$ the  Generalised Gelfand-Graev character attached
to the $\GF$-class of $u_a$. We let $\Gamma_\ci=\sum_{a\in
A(u)}\CY_\ci(u_a)\Gamma_{u_a}$ and define the normalisation
$\wt\Gamma_\ci=a_\ci\inv\zeta_\CI\Gamma_\ci$,
where $\zeta_\CI$ is the fourth root of unity attached by
Lusztig  to a  block $\CI$  (see \cite[7.2]{US}).
Finally  we  denote  by  $D$ (or $D_{\bG}$ when appropriate)
  the  Alvis-Curtis  duality  operation on $\CC(\GF)$ and by
$f\mapsto  f^*$  the  operation  on  Laurent  polynomials  in $q$ such that
$f^*(q)=f(q\inv)$. Note that we refer to the $\Gamma_\ci$ as the `Mellin transforms'
of the $\Gamma_{u_a}$; this should not be confused with the `Mellin transforms'
defined in Definition \ref{def:mellin} below.

For $\CI$ an $F$-stable block the sets
$(\CY_\ci)_{\ci\in\CI^F}$, $(\CX_\ci)_{\ci\in\CI^F}$ and
$(\Gamma_\ci)_{\ci\in\CI^F}$   are   three   bases   of  the  same  space
$\CC_\CI(\GF)$  of  unipotently  supported  class  functions. This space is
stable  under Alvis-Curtis duality  and two such  spaces attached to different
blocks are orthogonal.

Let  $\chi_{(x)}$ be the normalised characteristic function of the class in
$\GF$ of an element $x$, \ie, the function whose value is zero outside this
class and $|C_\bG(x)^F|$ on the class. The following result is a variation
on \cite[Lemma 2.5, Cor 2.6]{G}, which we shall require below.
\begin{theorem} \label{chi_u}
Let $u$ be a rational unipotent element. Then the normalised
characteristic function of the $\GF$-class of $u$ is given by
\begin{equation}\label{eq:charfn}
\chi_{(u)}=\sum_\CI\eta_{\bL_\CI}\zeta_\CI
\sum_{\ci\in\CI^F}\overline{\wt\CY_\ci(u)}
\sum_{\overset{\cj\geq\ci}{\cj\in\CI^F}}a_\cj\inv
(\wt P(\wt P^*)\inv)_{\ci,\cj}D\Gamma_\cj
\end{equation}
where $\CI$ runs over the rational blocks and $\wt P$ is the matrix whose
$(\ci,\cj)$ entry is the polynomial $\wt P_{\ci,\cj}$.
\end{theorem}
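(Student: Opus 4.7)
My approach is in two steps: first, expand $\chi_{(u)}$ in the $\CY$-basis by Fourier inversion on $A(u)$; second, use results of \cite{DLM3} to convert this expansion into a formula in the $D\Gamma$-basis inside each block.

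For the first step, the class functions on $\GF$ supported on $C^F$ (where $C$ is the class of $u$) have an orthogonal basis given by those $\CY_\ci$ with $\ci \in \CI^F$ and $C_\ci = C$, as $\CI$ runs over all $F$-stable blocks. The characters of the ``twisted'' set $A(u)\cdot F$ are in biorthogonal bijection with the $\GF$-classes inside $C^F$, so that standard Fourier inversion yields
\[
\chi_{(u)} \;=\; \sum_{\CI} \sum_{\substack{\ci \in \CI^F \\ C_\ci = C}} a_\ci\inv\, \overline{\CY_\ci(u)}\, \CY_\ci.
\]
The constraint $C_\ci = C$ is automatic, since $\CY_\ci(u)=0$ otherwise, and one can freely pass to the tilded versions $\wt\CY_\ci = q^{c_\ci}\CY_\ci$, producing a factor $q^{-2c_\ci}$ that will be absorbed in the next step.

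For the second step, within each block $\CI$ I invert the triangular transition from $\wt\CY_\ck$ to $D\Gamma_\cj$. The defining relation $\wt\CX_\ci = \sum_{\ck\le\ci}\wt P_{\ck,\ci}\wt\CY_\ck$ provides one triangular matrix; and \cite{DLM3} supplies a similar triangular expression for $D\Gamma_\cj$ in the $\wt\CY$-basis whose transition matrix, once the factors $a_\cj$, $\eta_{\bL_\CI}$, $\zeta_\CI$, and the appropriate power of $q$ have been divided out, is $\wt P^*$ (the $q\mapsto q\inv$-dual of $\wt P$). Inverting this upper-triangular system expresses $\wt\CY_\ck$ as $\eta_{\bL_\CI}\zeta_\CI \sum_{\cj\ge\ck}a_\cj\inv(\wt P(\wt P^*)\inv)_{\ck,\cj}D\Gamma_\cj$. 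Substituting into the expression from the first step yields precisely \eqref{eq:charfn}, the range $\cj\ge\ci$ being forced by the triangular support of the inverse matrix.

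\textbf{Main obstacle.} The principal technical difficulty is the bookkeeping of normalising constants: one must carefully account for the powers $q^{c_\ci}$ hidden in the tildes, the sign $\eta_{\bL_\CI}$ that arises when Alvis-Curtis duality is applied to generalised Gelfand-Graev characters (following the pattern of \cite{G}), and Lusztig's fourth root of unity $\zeta_\CI$ attached to each block. The crucial structural input from \cite{DLM3} is that applying $D$ to $\Gamma_\cj$ has, up to scalars, the effect of replacing $\wt P$ by $\wt P^*$; this is what produces the matrix product $\wt P(\wt P^*)\inv$, rather than just $\wt P\inv$, in the final formula.
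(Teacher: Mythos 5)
Your overall strategy --- expand $\chi_{(u)}$ on the class $C^F$ and then change basis to the $D\Gamma_\cj$ --- is in principle equivalent to the paper's argument, but both of your intermediate formulas are incorrect, and the errors do not cancel. First, the Fourier inversion in your Step~1 has the wrong constant: the column orthogonality dual to \cite[(4.2)]{DLM3} gives $\sum_\ci\overline{\CY_\ci(u)}\CY_\ci(u_b)=|A(u)^F|\delta_{(u),(u_b)}$, whereas $\chi_{(u)}(u)=|C_\GF(u)|=|A(u)^F|\,|(C_\bG^0(u))^F|$; hence the correct expansion is $\chi_{(u)}=|(C_\bG^0(u))^F|\sum_\ci\overline{\CY_\ci(u)}\,\CY_\ci$, with a factor $|(C_\bG^0(u))^F|$ of positive degree in $q$, not the factor $a_\ci\inv$ you wrote. (Your right-hand side is bounded independently of $q$ at $u$, so it cannot equal $\chi_{(u)}$.) Second, the identity you assert in Step~2, $\wt\CY_\ck=\eta_{\bL_\CI}\zeta_\CI\sum_{\cj\ge\ck}a_\cj\inv(\wt P(\wt P^*)\inv)_{\ck,\cj}D\Gamma_\cj$, is not an expression for $\wt\CY_\ck$ but for the element $f_\ck$ of the basis \emph{dual} to $(\wt\CY_\cl)$ with respect to $\scal{\cdot}{\cdot}\GF$. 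What \cite{DLM3} actually supplies is the inner product $\scal{D\Gamma_\cj}{\wt\CX_\ck}\GF=\eta_{\bL_\CI}a_\cj\zeta_\CI\inv\wt P^*_{\cj,\ck}$, not a direct triangular expansion of $D\Gamma_\cj$ in the $\wt\CY$-basis with matrix $\wt P^*$; the direct expansion carries the same class-dependent factors $|A(v)|\,|(C_\bG^0(v))^F|\,q^{-c_\ci}$ that appear in Theorem \ref{DGammaiv}, and your bookkeeping drops them. Substituting your Step~2 into your Step~1 produces the right-hand side of \eqref{eq:charfn} multiplied termwise by $a_\ci\inv q^{-2c_\ci}$, which is not $1$, so the argument as written does not close --- the $q^{-2c_\ci}$ is not ``absorbed''.

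The repair is essentially the paper's route: expand $\chi_{(u)}$ in the basis \emph{dual} to $(\wt\CY_\ci)$, where the coefficients are exactly $\scal{\chi_{(u)}}{\wt\CY_\ci}\GF=\overline{\wt\CY_\ci(u)}$ with no extra constants, and compute that dual basis in terms of the $D\Gamma_\cj$ directly from the inner products $\scal{D\Gamma_\cj}{\wt\CX_\ck}\GF$. This sidesteps any need for the Gram matrix of the $\wt\CY_\ci$ (i.e.\ for the factors $|(C_\bG^0(u))^F|$), which is precisely where your version loses control of the normalisations.
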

Note that the sum in the theorem is over those blocks which contain a local
system whose support is the class of $u$.
\begin{proof}
We  shall find  coefficients $m_{\ci,\cj}$  such that  the set of functions
$(f_\ci=\sum_\cj   m_{\ci,\cj}D\Gamma_\cj)$  is  the  basis  of  the  space
$\CC_\uni(\GF)$  of unipotently  supported class  functions dual to the
basis  $(\wt\CY_\ci)$ for the  usual inner product  $\scal f{f'}\GF$ on
class functions. It will then follow that
$\chi_{(u)}=\sum_\ci\overline{\wt\CY_\ci(u)}f_\ci=
\sum_\ci\overline{\wt\CY_\ci(u)}\sum_\cj m_{\ci,\cj}D\Gamma_\cj$.

The coefficients $m_{\ci,\cj}$ are determined by the equations $\sum_\cj
m_{\ci,\cj}\scal{D\Gamma_\cj}{\wt\CY_\cl}\GF=\delta_{\ci,\cl}$.       By
orthogonality of the spaces $\CC_\CI(\GF)$ for different blocks, we have
$m_{\ci,\cj}=0$ unless  $\ci$ and $\cj$ are in the  same block.
For any  total order  extending $\le$  the matrix $\tilde
P_{\ci,\cj}$ is upper unitriangular, thus invertible; using
$\wt\CY_\cl=\sum_{\ck\in\CI}\tilde    P'_{\ck,\cl}\wt\CX_\ck$   where
$\tilde  P'_{\ck,\cl}$ are the entries of the matrix $\wt P\inv$, we get
$\scal{D\Gamma_\cj}{\wt\CY_\cl}\GF =\sum_\ck\tilde
P'_{\ck,\cl}\scal{D\Gamma_\cj}{\wt\CX_\ck}\GF$ if $\cl$ and $\cj$ are in
the  same  block  $\CI$,  and  the  inner  product  is  0  otherwise. So by
\cite[proof  of  6.2]{DLM3}  we  get $\scal{D\Gamma_\cj}{\wt\CY_\cl}\GF=
\sum_\ck\tilde           P'_{\ck,\cl}\eta_{\bL_\CI}a_\cj\zeta_\CI\inv\tilde
P^*_{\cj,\ck}$  and  the  equations  for  the $m_{\ci,\cj}$, when $\ci$ and
$\cj$ are in a block $\CI$, are $\sum_\cj m_{\ci,\cj}
\eta_{\bL_\CI}a_\cj\zeta_\CI\inv      \sum_\ck\tilde     P'_{\ck,\cl}\tilde
P^*_{\cj,\ck}=\delta_{\ci,\cl}$.   As   $\sum_\ck\tilde  P'_{\ck,\cl}\tilde
P^*_{\cj,\ck}$  is the $(\cj,\cl)$ entry  of $\wt P^*\wt P\inv$, this
can  be  written  in  matrix  terms  as  follows, $M$ being the matrix with
entries $m_{\ci,\cj}$:
$$
M\begin{pmatrix}\ddots&&\cr&a_\cj&\cr&&\ddots \end{pmatrix}
\wt P^*\wt P\inv=\eta_{\bL_\CI}\zeta_\CI I,
$$
where $I$ is the unit matrix.
Hence $$m_{\ci,\cj}=a_\cj\inv\eta_{\bL_\CI}\zeta_\CI
(\wt P(\wt P^*)\inv)_{\ci,\cj},$$ as stated.
\end{proof}

\begin{scholium}\label{sch:supdg}
For any rational unipotent element $u$ the (virtual) character
$D_G\Gamma_u$ is supported by unipotent classes greater than
or equal to the class of $u$.
\end{scholium}
\begin{proof} Let $C$ be the geometric class of $u$.
The statement is equivalent to the analogous support property for the functions
$D_G(\Gamma_\ci)$ for all $\ci$ supported by $C$.
Using the triangular shape of the
matrices $(\wt P'_{\ck,\cl})$ and $(\wt P^*_{\ck,\cl})$
it follows
from the proof of Theorem \ref{chi_u}, that
$\scal{D\Gamma_\cj}{\wt\CY_\cl}\GF$ is zero unless $\cj\leq\cl$.
Since the characteristic function of the $G^F$-class of a unipotent
element $u$ is a linear combination of $\wt\CY_\cl$ with  $\cl$
running over pairs with support $C$,  the
result follows.
\end{proof}

The last statement is also a consequence of the following result.
\begin{corollary}
With notation as in Theorem \ref{chi_u}, we have
\be
\chi_{(u)}=
\sum_{\{C\subseteq\bG_\uni\mid u\in\ol C\}}
|A(C)|\inv\sum_{a\in A(C)}c(u,v_a)D_\bG\Gamma_{v_a},
\ee
where $C$ runs over the unipotent classes, $A(C)=A(v)$ for $v\in C$ and
$v_a\in C^F$ corresponds to $a \in A(C)$, and
$$
c(u,v_a)=\sum_{\CI\in\CP^F}\eta_{\bL_\CI}\zeta_\CI\inv\wt\CY_\CI(u)^t
\wt P(\wt P^*)\inv\wt\CY_\CI(v_a),
$$
and $\wt\CY_\CI(u)$ denotes the column vector with entries $\wt\CY_\ci(u)$
where $\ci$ runs over $\CI$.
\end{corollary}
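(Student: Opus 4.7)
The plan is to derive this Corollary as a reorganised form of Theorem \ref{chi_u}, by grouping the sum over pairs $\cj$ in \eqref{eq:charfn} according to their support class $C=C_\cj$ and expanding each $\Gamma_\cj$ via its defining Mellin relation $\Gamma_\cj=\sum_{a\in A(v)}\CY_\cj(v_a)\Gamma_{v_a}$ for a fixed $v\in C^F$.

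First, I would observe that in \eqref{eq:charfn} the constraint $\cj\ge\ci$ may be removed: since $\wt P$ is upper unitriangular with respect to $\le$ (and hence so is $(\wt P^*)\inv$), the product $\wt P(\wt P^*)\inv$ has zero $(\ci,\cj)$-entry unless $\ci\le\cj$. Substituting $D\Gamma_\cj=\sum_a\CY_\cj(v_a)\,D_\bG\Gamma_{v_a}$ and interchanging the orders of summation, the sum over $\cj$ is stratified by its support $C_\cj=C$; the factor $a_\cj\inv=|A(C)|\inv$ depends only on $C$ and is pulled out. The restriction $u\in\ol C$ in the outer sum arises automatically, since $\ol{\wt\CY_\ci(u)}=0$ unless $u\in C_\ci$, and upper triangularity then forces $C_\ci\subseteq\ol{C_\cj}=\ol C$.

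The remaining task is to identify the resulting coefficient of $D_\bG\Gamma_{v_a}$, namely
\[
\sum_{\CI\in\CP^F}\eta_{\bL_\CI}\zeta_\CI\sum_{\ci,\cj\in\CI^F}\ol{\wt\CY_\ci(u)}(\wt P(\wt P^*)\inv)_{\ci,\cj}\CY_\cj(v_a),
\]
with the stated matrix-product expression $c(u,v_a)=\sum_\CI\eta_{\bL_\CI}\zeta_\CI\inv\wt\CY_\CI(u)^t\wt P(\wt P^*)\inv\wt\CY_\CI(v_a)$. Using the conjugation identity $\wt P=D\inv PD$, where $D$ is the diagonal matrix with entry $q^{c_\ci}$ at position $\ci$, the diagonal rescalings that absorb $\CY$ into $\wt\CY$ commute with the product $\wt P(\wt P^*)\inv$, so the $q^c$-normalisations transfer across to both vectors without altering the form of the matrix. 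The conjugate bar on $\wt\CY_\ci(u)$ and the change $\zeta_\CI\to\zeta_\CI\inv$ are reconciled via $\zeta_\CI\inv=\ol{\zeta_\CI}$ (as $\zeta_\CI$ is a fourth root of unity) together with the reality of the chosen $\wt\CY_\ci$ under the standard $F$-equivariant trivialisations of the local systems.

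The main delicate point is the precise bookkeeping of the $q$-powers coming from $\wt\CY=q^c\CY$ and $\wt P_{\ck,\ci}=q^{c_\ci-c_\ck}P_{\ck,\ci}$; these must combine so that no residual power of $q$ survives and the unadorned matrix $\wt P(\wt P^*)\inv$ reappears in the middle of the quadratic form. The accompanying handling of the complex conjugate and of the fourth-root-of-unity adjustment $\zeta_\CI\to\zeta_\CI\inv$ is the other technical point requiring care, but it is forced by the orthogonality argument already used in the proof of Theorem~\ref{chi_u}.
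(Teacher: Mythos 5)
Your proposal matches the paper's proof, which consists of a single line: substitute $\Gamma_\cj=\sum_{a\in A(v)}\CY_\cj(v_a)\Gamma_{v_a}$ into equation \eqref{eq:charfn} and rearrange, exactly as you do in your first two paragraphs. The discrepancies you labour over at the end (the bar on $\wt\CY_\ci(u)$, $\zeta_\CI$ versus $\zeta_\CI\inv$, and the tilde on $\CY_\cj(v_a)$) are genuine mismatches between the stated formula for $c(u,v_a)$ and what direct substitution produces, but the paper's proof does not address them either, so they should be read as notational slips in the corollary's statement rather than as a gap your argument needs to fill; in particular your claimed reconciliation via conjugating the matrix by the diagonal of $q^{c_\ci}$'s is not needed for the substitute-and-rearrange argument to go through.
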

\begin{proof}
We substitute the relation $\Gamma_\cj=\sum_{a\in A(v)}\CY_\cj(v_a)\Gamma_{v_a}$
into equation \eqref{eq:charfn} and rearrange.
\end{proof}

Scholium \ref{sch:supdg} follows from the above statement by simply inverting
the equation for $\chi_{(u)}$.
Now we specialise Theorem \ref{chi_u} to the case of a regular unipotent class;
let $\Gamma_u^\CI$ be the orthogonal
projection of $\Gamma_u$ onto the space $\CC_\CI(\GF)$
and write $c_\CI(u)$ for the
common value of $c_\ci$ for $\ci\in\CI$ whose support contains $u$:
\begin{corollary}\label{cor:regu}
If $u$ is a rational regular unipotent element, then
$$\chi_{(u)}=\sum_\CI\eta_{\bL_\CI}\zeta_\CI q^{c_\CI(u)}D\Gamma_u^\CI,$$
where the sum is over the regular blocks (those blocks containing a pair
whose support is the regular unipotent class).
\end{corollary}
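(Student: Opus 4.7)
The plan is to apply Theorem \ref{chi_u} with $u$ regular and exploit the fact that the regular class $C^{\reg}$ is maximal among unipotent classes to collapse the double sum. First, $\wt\CY_\ci(u) = 0$ unless $u \in C_\ci$, so the sum in \eqref{eq:charfn} is supported on pairs $\ci$ with $C_\ci = C^{\reg}$; these occur precisely in the regular blocks. Since $C^{\reg}$ is the unique maximal unipotent class, any such $\ci$ is maximal under $\le$ in its block, so $\cj \ge \ci$ forces $\cj = \ci$ and the inner sum collapses to one term. Moreover $\wt P$ and $(\wt P^*)\inv$ are upper unitriangular, so $(\wt P(\wt P^*)\inv)_{\ci,\ci}=1$. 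All pairs $\ci$ in a given regular block $\CI$ with $C_\ci = C^{\reg}$ share the same value $c_\CI(u)$ of $c_\ci$, which I pull out of $\wt\CY_\ci = q^{c_\ci}\CY_\ci$, reducing \eqref{eq:charfn} to
$$
\chi_{(u)} = \sum_\CI \eta_{\bL_\CI}\zeta_\CI\, q^{c_\CI(u)} \sum_\ci a_\ci\inv \overline{\CY_\ci(u)}\, D\Gamma_\ci,
$$
where the inner sum runs over $\ci \in \CI^F$ with $C_\ci = C^{\reg}$ and the outer over the regular blocks.

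It remains to identify the inner sum with $D\Gamma_u^\CI$. For this I would use Mellin inversion on the (possibly $F$-twisted) finite group $A(u)$: as $\ci$ varies over all $F$-stable pairs with support $C^{\reg}$, the local systems $\zeta$ range over all $F$-stable $\bG$-equivariant irreducible local systems on $C^{\reg}$, which correspond to the $F$-stable irreducible characters of $A(u)$. The matrix $(\CY_\ci(u_a))$ is accordingly (a twist of) the character table of $A(u)^F$, and inverting the relation $\Gamma_\ci = \sum_{a \in A(u)} \CY_\ci(u_a)\Gamma_{u_a}$ yields
$$
\Gamma_u = |A(u)|\inv \sum_{\ci :\, C_\ci = C^{\reg}} \overline{\CY_\ci(u)}\,\Gamma_\ci.
$$
Since $\Gamma_\ci \in \CC_\CI(\GF)$ whenever $\ci \in \CI$, and $\CC_\uni(\GF) = \bigoplus_\CI \CC_\CI(\GF)$ is an orthogonal decomposition, projecting to $\CC_\CI(\GF)$ restricts the sum to $\ci \in \CI^F$ with regular support. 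Noting that $a_\ci = |A(u)|$ for all such $\ci$ and that $D$ preserves each $\CC_\CI(\GF)$ (so commutes with the projection), the desired identity follows.

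The main delicacy is the $F$-twisted Mellin inversion: when $F$ permutes the components of $A(u)$ non-trivially, one must invoke orthogonality for $F$-class functions on $A(u)$ rather than ordinary character orthogonality, and keep track of the way $\CY_\ci(u_a)$ depends only on the $F$-class of $a$ together with the fact that the defining sum over $A(u)$ weights each class by its size. Granted this standard ingredient, the rest of the argument is formal book-keeping with the triangular structure of $\wt P$.
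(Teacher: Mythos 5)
Your proof is correct and follows essentially the same route as the paper: collapse the double sum in \eqref{eq:charfn} using the maximality of the regular class and the unitriangularity of $\wt P$ (with the diagonal blocks for a fixed support being identity submatrices), then identify the remaining sum with $D\Gamma_u^\CI$. The only cosmetic difference is that you re-derive the identity $\Gamma_u^\CI=\sum_{\{\ci\in\CI^F\mid C_\ci\owns u\}}\overline{\CY_\ci(u)}a_\ci\inv\Gamma_\ci$ by orthogonality of the $\CY_\ci$ and projection onto $\CC_\CI(\GF)$, whereas the paper simply cites it from \cite[lemma 6.3]{DLM3}.
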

\begin{proof}
Since the matrix $\wt P$ is unitriangular and the diagonal blocks
attached  to local systems  with a given  support are identity submatrices,
formula \ref{eq:charfn} reduces to:
$$\chi_{(u)}=\sum_\CI\eta_{\bL_\CI}\zeta_\CI
\sum_{\{\ci\in\CI^F\mid C_\ci\owns u\}}\overline{\wt\CY_\ci(u)}
a_\ci\inv D\Gamma_\ci.$$
Now substitute the value $\Gamma_u^\CI=
\sum_{\{\ci\in\CI^F\mid C_\ci\owns u\}}\overline\CY_\ci(u)
a_\ci\inv\Gamma_\ci$ given in \cite[lemma 6.3]{DLM3} to obtain the
result.
\end{proof}
The corresponding result for subregular elements is
\begin{corollary}\label{ratsubreg}
If $u$ is a rational subregular unipotent element
$$\chi_{(u)}=\sum_\CI\eta_{\bL_\CI}\zeta_\CI    q^{c_\CI(u)}[D\Gamma_u^\CI+
(q\inv-q)\sum_{\overset{\{\ci\in\CI\mid  C_\ci\owns  u \text{ and}}{ \text{ $\ci$  is
standard}\}}}   \overline{\CY_\ci(u)})\CY_{\ci_1}(v)D\Gamma_v^\CI],$$  where
$v$ is any regular rational unipotent element, $\ci_1$ is the pair in
$\CI$ labelled via the Springer correspondence by the trivial representation, and where
``standard'' has the sense of \cite[proposition 7.1]{DLM3}.
\end{corollary}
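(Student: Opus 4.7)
The plan is to specialise the master formula \eqref{eq:charfn} from Theorem \ref{chi_u} to a subregular $u$ and to isolate the contributions coming from pairs of subregular versus regular support. Since $\CY_\ci$ is supported on $C_\ci$, the outer sum over $\ci$ is effectively restricted to those $F$-stable pairs whose support equals the subregular class of $u$. For any such $\ci$, the pairs $\cj\geq\ci$ in the same block then split into two families: those with $C_\cj=C_\ci$ still subregular, and those with $C_\cj$ strictly containing $C_\ci$ in its closure, which by dimension forces $C_\cj$ to be the regular class. I would treat the two contributions separately.

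For the subregular $\cj$ I would argue exactly as in the proof of Corollary \ref{cor:regu}: the diagonal block of $\wt P$ attached to pairs with a fixed support is the identity, so its contribution to $\wt P(\wt P^*)\inv$ is also the identity, and the formula $\Gamma_u^\CI=\sum_{\ci\in\CI^F,\,C_\ci\owns u}\overline{\CY_\ci(u)}a_\ci\inv\Gamma_\ci$ from \cite[lemma 6.3]{DLM3} assembles this piece into the leading summand $q^{c_\CI(u)}D\Gamma_u^\CI$.

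For the regular $\cj$ I would exploit the block-triangular shape of $\wt P$. Ordering pairs with subregular support before those with regular support, the relevant principal minor of $\wt P$ reads $\bigl(\begin{smallmatrix} I & B \\ 0 & I\end{smallmatrix}\bigr)$, where $B$ is the subregular-to-regular block; a direct computation then gives $(\wt P(\wt P^*)\inv)_{\ci,\cj}=\wt P_{\ci,\cj}-\wt P^*_{\ci,\cj}$ for $\ci$ subregular and $\cj$ regular. Because $c_\cj-c_\ci=-1$ in this configuration, $\wt P_{\ci,\cj}=q\inv P_{\ci,\cj}$, so whenever $P_{\ci,\cj}$ is a scalar the $(\ci,\cj)$ entry of $\wt P(\wt P^*)\inv$ equals $(q\inv-q)P_{\ci,\cj}$. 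By \cite[proposition 7.1]{DLM3}, this scalarity, together with the factorisation $P_{\ci,\cj}=\CY_{\ci_1}(v)\overline{\CY_\cj(v)}$, is precisely the content of the ``standard'' hypothesis (and the corresponding polynomials vanish for $\ci$ non-standard). Summing over regular $\cj$ and using \cite[lemma 6.3]{DLM3} once more to recognise $\Gamma_v^\CI=\sum_{\cj\text{ reg in }\CI^F}\overline{\CY_\cj(v)}a_\cj\inv\Gamma_\cj$, this rearranges into $(q\inv-q)\sum_{\ci\text{ std}}\overline{\CY_\ci(u)}\CY_{\ci_1}(v)D\Gamma_v^\CI$, which is the correction term in the statement.

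The main obstacle is the final identification of the constants $P_{\ci,\cj}$ as $\CY_{\ci_1}(v)\overline{\CY_\cj(v)}$ — this is where \cite[proposition 7.1]{DLM3} carries the real content of ``standard'' pairs — after which the reassembly into $\CY_{\ci_1}(v)D\Gamma_v^\CI$ and the tracking of the scalars $\eta_{\bL_\CI}$, $\zeta_\CI$ and $q^{c_\CI(u)}$ is routine.
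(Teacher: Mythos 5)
Your proposal is correct and follows essentially the same route as the paper: specialise \eqref{eq:charfn}, use the block shape $\bigl(\begin{smallmatrix} I & B \\ 0 & I\end{smallmatrix}\bigr)$ to get $(\wt P(\wt P^*)\inv)_{\ci,\cj}=\wt P_{\ci,\cj}-\wt P^*_{\ci,\cj}=q\inv-q$ via \cite[proposition 7.1]{DLM3}, and reassemble with \cite[lemma 6.3]{DLM3}. The only inaccuracy is attributing a factorisation $P_{\ci,\cj}=\CY_{\ci_1}(v)\overline{\CY_\cj(v)}$ to the ``standard'' hypothesis; what is actually used is that $P_{\ci,\ci_1}=1$, that a block contains at most one regularly supported pair (\cite[1.10]{DLM2}), and that $\CY_{\ci_1}(v)$ is a root of unity, which together give $a_{\ci_1}\inv D\Gamma_{\ci_1}=\CY_{\ci_1}(v)D\Gamma_v^\CI$.
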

The sum above may be restricted to the blocks which contain a local system
supported by the subregular class.
\begin{proof}
Let  $\ci$ be a pair  with subregular support; \cite[proposition 7.1]{DLM3}
states  (once a misprint $q$ for $q\inv$ is corrected in (i)) that if $\ci$ is
standard  then $\wt P_{\ci,\cj}=\begin{cases}  q\inv &\text{if }\cj=\ci_1\\
\delta_{\ci,\cj}&\text{otherwise}\\   \end{cases}$   and   otherwise   $\wt
P_{\ci,\cj}=\delta_{\ci,\cj}$.  In the first case we can arrange the matrix
$\wt  P$ so  that it  is upper  unitriangular and  the lower  right corner,
indexed   by   $\ci$   and   $\ci_1$,  is  $\begin{pmatrix}  1&q\inv\\0&1\\
\end{pmatrix}$.  In the second  case we can  arrange $\wt P$  so that it is
upper  unitriangular and $\ci$ indexes the last line. The lower right block
of   $\wt   P(\wt   P^*)\inv$ in the two respective cases
is   then
$\begin{pmatrix}
1&q\inv-q\\0&1\\ \end{pmatrix}$ or $\begin{pmatrix} 1\\
\end{pmatrix}$.

Using these values, formula \ref{eq:charfn} reduces to:
$$\chi_{(u)}=\sum_\CI\eta_{\bL_\CI}\zeta_\CI \big (
\sum_{\{\ci\in\CI^F\mid C_\ci\owns u\}}
\dfrac{\overline{\wt\CY_\ci(u)}}{a_\ci} D\Gamma_\ci
+
\sum_{\overset{\{\ci\in\CI\mid  C_\ci\owns  u \text{ and}}{ \text{ $\ci$  is
standard}\}}}
\dfrac{\overline{\wt\CY_\ci(u)}}{a_{\ci_1}} (q\inv -q)D\Gamma_{\ci_1}
\big ).$$
The first term in the sum can be transformed as in Corollary \ref{cor:regu}.
If we take into account that there is at most one regularly supported
local system in a block (see \cite[1.10]{DLM2}), which in this case we take to be $\ci_1$,
then $D\Gamma_v^\CI=\overline\CY_{\ci_1}(v)a_{\ci_1}\inv D\Gamma_{\ci_1}$
which yields $a_{\ci_1}\inv D\Gamma_{\ci_1}=\CY_{\ci_1}(v)D\Gamma_v^\CI$
since $\CY_{\ci_1}(v)$ is a root of unity because $A(v)$ is commutative.
The second term is now as in the statement of the corollary .
\end{proof}

We  now look at the leading term  in the formula \eqref{eq:charfn}, \ie, the
term indexed by $\gamma$ such that $u$ is in the support of $\gamma$.
\begin{theorem}\label{DGammaiv}(cf. \cite[\S 2.4]{G})
If  $v$ is a rational unipotent element and $\ci$ a rational pair such that
$C_\ci\owns v$, we have
$$D\Gamma_\ci(v)=|A(v)||(C_\bG(v)^\circ)^F|\eta_{\bL_\CI}\zeta_\CI\inv
q^{-c_\ci}\CY_\ci(v),$$ where $\CI$ denotes the block of $\ci$.
\end{theorem}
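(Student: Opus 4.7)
The plan is to expand $D\Gamma_\ci$ in the basis $(\wt\CY_\cl)_{\cl\in\CI^F}$ of $\CC_\CI(\GF)$ and evaluate at $v$; since each $\wt\CY_\cl$ is supported on $C_\cl$, only the $\cl$ with $C_\cl=C_\ci$ can contribute to $D\Gamma_\ci(v)$, and by the block orthogonality of the spaces $\CC_\CI$ only $\cl$ in the block $\CI$ appear in the expansion of $D\Gamma_\ci$ itself.

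First I would import from the proof of Theorem~\ref{chi_u} the formula
$$\scal{D\Gamma_\ci}{\wt\CY_\cl}\GF=\eta_{\bL_\CI}\,a_\ci\,\zeta_\CI\inv\,(\wt P^*\wt P\inv)_{\ci,\cl}$$
(the inner product being $0$ when $\ci,\cl$ lie in different blocks). The matrix $\wt P$ is unitriangular with respect to $\le$, and since the IC-complex of an equivariant local system restricts to that local system on its own open stratum, $\wt P$ is in fact the identity on the sub-block formed by pairs with a given common support. These two facts force $(\wt P^*\wt P\inv)_{\ci,\cl}=\delta_{\ci,\cl}$ whenever $C_\cl=C_\ci$, so that $\scal{D\Gamma_\ci}{\wt\CY_\cl}\GF=\eta_{\bL_\CI}a_\ci\zeta_\CI\inv\delta_{\ci,\cl}$ for every such $\cl$.

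Next I would compute the Gram matrix $G_{\cl,\cl'}=\scal{\wt\CY_\cl}{\wt\CY_{\cl'}}\GF$ for pairs $\cl,\cl'\in\CI^F$ with $C_\cl=C_{\cl'}=C_\ci$. Partitioning $C_\ci^F$ into $\GF$-orbits indexed by $H^1(F,A(u))$ (for any $u\in C_\ci^F$), writing $|C_\bG(u_a)^F|$ as the product of $|(C_\bG(u)^\circ)^F|$ with the order of the $F$-twisted centraliser of a lift of $a$ in $A(u)$ via Lang--Steinberg, and applying the $F$-twisted Schur orthogonality for the characters of $A(u)$ on the irreducible equivariant local systems, $G$ turns out to be diagonal with $G_{\cl,\cl}=q^{2c_\ci}/|(C_\bG(v)^\circ)^F|$.

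Combining the two steps, the coefficient of $\wt\CY_\cl$ in $D\Gamma_\ci$ for $\cl$ of support $C_\ci$ equals $\eta_{\bL_\CI}a_\ci\zeta_\CI\inv\delta_{\ci,\cl}/G_{\cl,\cl}$, so only $\wt\CY_\ci$ contributes to $D\Gamma_\ci(v)$; evaluating and substituting $\wt\CY_\ci(v)=q^{c_\ci}\CY_\ci(v)$ together with $a_\ci=|A(v)|$ yields the stated formula. The main technical obstacle is the Gram matrix computation in the third step, which hinges on the standard but non-trivial $F$-twisted Schur orthogonality for characters of the component group together with the Lang--Steinberg bookkeeping of the rational conjugacy classes in $C_\ci^F$.
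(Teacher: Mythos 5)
Your first two steps are sound and are exactly the ingredients the paper itself uses: the identity $\scal{D\Gamma_\ci}{\wt\CY_\cl}\GF=\eta_{\bL_\CI}a_\ci\zeta_\CI\inv(\wt P^*\wt P\inv)_{\ci,\cl}$ comes from the proof of Theorem~\ref{chi_u}, and unitriangularity together with the identity diagonal blocks does give $(\wt P^*\wt P\inv)_{\ci,\cl}=\delta_{\ci,\cl}$ whenever $C_\cl=C_\ci$. The gap is in the Gram matrix step. You implicitly assert that $|(C_\bG(u_a)^\circ)^F|$ is the same for all rational classes $u_a$ in the geometric class. What Lang--Steinberg actually gives is $|(C_\bG(u_a)^\circ)^F|=|(C_\bG(u)^\circ)^{\ad(\dot a)F}|$, and this genuinely depends on $a$ when $\dot a$ induces an outer automorphism of the reductive quotient of $C_\bG(u)^\circ$. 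For example, in $\bG=\SO_8$ with $p$ odd and $u$ of Jordan type $(3,3,1,1)$, one has $A(u)=\BZ/2\BZ$ acting by inversion on the reductive part $\mathbb{G}_m\times\mathbb{G}_m$ of $C_\bG(u)^\circ$, so the two rational classes have connected centralisers of orders $(q-1)^2$ and $(q+1)^2$ times a common power of $q$. In such cases $G$ is not diagonal, its diagonal entries are not $q^{2c_\ci}/|(C_\bG(v)^\circ)^F|$ (a quantity which is not even independent of the choice of $v$), and your conclusion that only $\wt\CY_\ci$ contributes to $D\Gamma_\ci$ on $C_\ci^F$ fails: it would force $D\Gamma_\ci(v)$ to depend on $v$ only through $\CY_\ci(v)$, whereas the formula to be proved carries the $v$-dependent factor $|(C_\bG(v)^\circ)^F|$.

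The statement is still reachable along your lines, but only by inverting the full Gram matrix: setting $Y_{\cl,a}=\CY_\cl(u_a)$, $D_A=\mathrm{diag}(|A(u_a)^F|)$ and $D_N=\mathrm{diag}(|(C_\bG(u_a)^\circ)^F|)$, the orthogonality relation $YD_A\inv Y^*=I$ gives $G=q^{2c_\ci}YD_N\inv Y\inv$, hence $G\inv Y=q^{-2c_\ci}YD_N$, which reproduces the pointwise formula of the theorem. The paper avoids the Gram matrix altogether: it evaluates the expansion of $\chi_{(u)}$ from Theorem~\ref{chi_u} at the fixed element $v$, then multiplies by $|A(u)^F|\inv\CY_\cj(u)$ and sums over the rational classes $(u)$ contained in $C_\ci$, so that the $|A(u)^F|\inv$-weighted orthogonality of the $\CY$'s isolates $D\Gamma_\cj(v)$ and only the single quantity $|C_\GF(v)|=|A(v)^F|\,|(C_\bG(v)^\circ)^F|$ ever enters. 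I recommend you rework the last step in that pointwise form.
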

\begin{proof}
Assume that $v$ is a rational unipotent element in the same geometric class
as  $u$. Since in  \ref{eq:charfn} the only  terms which do  not vanish are
those where $C_\ci\owns u$, thus $C_\ci\owns v$, and since
$D\Gamma_\cj(v)=0$  when $\cj>\ci$ and since the diagonal blocks in $\wt P$
pertaining to pairs with the same support are identity matrices, we get
\begin{equation}\label{chiuv}
\chi_{(u)}(v)=|C_\GF(u)|\delta_{(u),(v)}=
\sum_\CI\eta_{\bL_\CI}\zeta_\CI\sum_{\ci\in\CI^F}
\overline{\wt\CY_\ci(u)}a_\ci\inv D\Gamma_\ci(v).
\end{equation}
We now use the  orthogonality relation \cite[(4.2)]{DLM3} for the $\CY_\ci$
which can be written
$$\sum_{(u)}|A(u)^F|\inv\overline{\CY_\ci(u)}\CY_\cj(u)=\delta_{\ci.\cj},$$
where  $(u)$ runs over the rational  classes contained in $C_\ci$.
Multiplying both sides of the rightmost equation in \eqref{chiuv}
by $|A(u)^F|\inv\CY_\cj(u)$ and summing over $(u)$
we    get    $$|A(v)^F|\inv\CY_\cj(v)|C_\GF(v)|=    \eta_{\bL_\CI}\zeta_\CI
q^{c_\cj}a_\cj\inv  D\Gamma_\cj(v),$$ whenever
$C_\cj\owns v$, whence the theorem.
\end{proof}
The following corollary, which may be found in
\cite[2.4(a)]{G},
is also a direct consequence of the formulas in the
proof of Theorem \ref{chi_u}.
\begin{corollary} Let $\ci $ and $\cj$ be two pairs with same support; then
$$\scal{D\Gamma_\ci}{\CY_\cj}\GF=
\begin{cases} 0&\text{if }\ci\neq\cj\\ a_\ci\eta_{\bL_\CI}\zeta_\CI\inv
q^{-c_\ci} &\text{if } \ci=\cj\in\CI
\end{cases}$$
\end{corollary}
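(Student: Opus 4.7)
The plan is to read off the result directly from the machinery built up in the proof of Theorem \ref{chi_u}, specialising to the case where both indices have the same support so that the intersection-cohomology matrix $\wt P$ becomes trivial.

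Concretely, I would start from the identity derived in the middle of the proof of Theorem \ref{chi_u}: when $\ci$ and $\cj$ lie in a common block $\CI$,
\[
\scal{D\Gamma_\cj}{\wt\CY_\ci}\GF=\eta_{\bL_\CI}\,a_\cj\,\zeta_\CI\inv
\sum_\ck \wt P^*_{\cj,\ck}(\wt P\inv)_{\ck,\ci}
=\eta_{\bL_\CI}\,a_\cj\,\zeta_\CI\inv\,(\wt P^*\wt P\inv)_{\cj,\ci},
\]
and the inner product vanishes outright when $\ci$ and $\cj$ lie in different blocks (by orthogonality of the spaces $\CC_\CI(\GF)$). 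This already settles the off-block case, and it reduces the theorem to showing that, \emph{when $\ci$ and $\cj$ have a common support}, $(\wt P^*\wt P\inv)_{\cj,\ci}=\delta_{\ci,\cj}$.

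Next I would exploit the triangularity of $\wt P$: a term $\wt P^*_{\cj,\ck}(\wt P\inv)_{\ck,\ci}$ can contribute only when $\cj\le\ck\le\ci$ in the partial order on $\CI$. If $C_\ci=C_\cj$, then $C_\cj\subset\ol{C_\ck}$ and $C_\ck\subset\ol{C_\ci}=\ol{C_\cj}$ force $C_\ck=C_\ci$ as well; so only pairs $\ck$ with the same support as $\ci$ and $\cj$ enter the sum. Now I invoke the standard fact, already used in the proof of Corollary \ref{cor:regu}, that the diagonal blocks of $\wt P$ indexed by pairs of a fixed support are the identity (the restriction of $\IC(\ol{C},\zeta)$ to the open stratum $C$ is just $\zeta$). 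Consequently the same is true for $\wt P\inv$ and $\wt P^*$, so $\wt P^*_{\cj,\ck}=\delta_{\cj,\ck}$ and $(\wt P\inv)_{\ck,\ci}=\delta_{\ck,\ci}$ for all relevant $\ck$, and the whole sum collapses to $\delta_{\ci,\cj}$.

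Plugging this back, $\scal{D\Gamma_\cj}{\wt\CY_\ci}\GF=\eta_{\bL_\CI}\,a_\ci\,\zeta_\CI\inv$ if $\ci=\cj\in\CI$ and vanishes otherwise; dividing through by the normalising factor $q^{c_\ci}$ in $\wt\CY_\ci=q^{c_\ci}\CY_\ci$ produces exactly the statement of the corollary. The only genuinely delicate step is the short poset/support argument collapsing the matrix product to $\delta_{\ci,\cj}$; everything else is bookkeeping that has already been done inside the proof of Theorem \ref{chi_u}.
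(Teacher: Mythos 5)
Your argument is correct, but it is not the proof the paper writes out: the paper's proof goes through Theorem \ref{DGammaiv}, evaluating $D\Gamma_\ci$ pointwise on the rational classes contained in $\supp\ci$ (where it equals $|A(v)||(C_\bG(v)^\circ)^F|\eta_{\bL_\CI}\zeta_\CI\inv q^{-c_\ci}\CY_\ci(v)$), expanding the inner product as a sum over $(\supp\ci)^F$, and then invoking the orthogonality relation \cite[(4.2)]{DLM3} for the $\CY_\ci$. You instead stay at the level of the matrix identity $\scal{D\Gamma_\cj}{\wt\CY_\cl}\GF=\eta_{\bL_\CI}a_\cj\zeta_\CI\inv(\wt P^*\wt P\inv)_{\cj,\cl}$ from the proof of Theorem \ref{chi_u}, and collapse $(\wt P^*\wt P\inv)_{\cj,\ci}$ to $\delta_{\ci,\cj}$ by the poset/support argument together with the fact that the diagonal blocks of $\wt P$ attached to a fixed support are identity matrices. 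This is exactly the alternative the authors flag in the sentence introducing the corollary (``is also a direct consequence of the formulas in the proof of Theorem \ref{chi_u}'') but do not carry out; you have supplied the missing details, and they are all sound --- in particular the step that $\cj\le\ck\le\ci$ with $C_\ci=C_\cj$ forces $C_\ck=C_\ci$, and that inversion and the $*$-operation preserve both the support of the matrix on the partial order and the identity diagonal blocks. What each route buys: the paper's computation reuses the pointwise formula of Theorem \ref{DGammaiv} (itself of independent interest) and needs the orthogonality of the $\CY$'s on a fixed class; yours avoids all pointwise evaluation and reduces the statement to bookkeeping in the incidence algebra of the poset of pairs, at the cost of quoting the inner-product formula from \cite[proof of 6.2]{DLM3} that underlies Theorem \ref{chi_u}. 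Both ultimately rest on that same input, and the handling of the different-block case by orthogonality of the $\CC_\CI(\GF)$ and of the normalisation $\wt\CY_\ci=q^{c_\ci}\CY_\ci$ is correct.
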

\begin{proof}
By Theorem \ref{DGammaiv} we have
$$\displaylines{
\scal{D\Gamma_\ci}{\CY_\cj}\GF=|\GF|\inv\sum_{v\in (\supp\ci)^F}D\Gamma_\ci(v)\overline{\CY_\cj(v)}=\cr
|\GF|\inv\sum_{v\in (\supp\ci)^F}|A(v)||A(v)^F|\inv|C_\GF(v)|\eta_{\bL_\CI}\zeta_\CI\inv
q^{-c_\ci}\CY_\ci(v)\overline{\CY_\cj(v)},}$$
where $\CI$ is the block containing $\ci$. The last sum reduces to
$$
\sum_{a\in A(v)}\eta_{\bL_\CI}\zeta_\CI\inv
q^{-c_\ci}\CY_\ci(u_a)\overline{\CY_\cj(u_a)}
$$
where   $u_a$  is  a   representative  of  the   rational  conjugacy  class
in $C_\ci$ parameterised by $a$, given the choice of $v\in C_\ci$.
Applying the orthogonality formula
\cite[(4.2)]{DLM3}, we obtain the result.
\end{proof}
\begin{remark}
If we apply the above theorem to a regular unipotent element we recover formula \cite[2.1]{DLM2}.
\end{remark}
Let us now compute the value of an irreducible character on its ``unipotent
support''  (see  Definition \ref{def:wfs}).  The  following  proposition generalises
\cite[3.15.4]{DLM1}.
\begin{proposition}\label{prop:val-us}
Let  $\chi$ be an irreducible character and $v$ a rational unipotent element
such  that  $\scal\chi{D\Gamma_u}\GF=0$  for  any  $u$ in a conjugacy class
strictly larger than the conjugacy class of $v$, then
$$\chi(v)=\sum_\CI\eta_{\bL_\CI}\zeta_\CI\inv q^{c_\CI(v)}\scal\chi{D\Gamma_v^\CI}\GF.$$
\end{proposition}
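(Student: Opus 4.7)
The natural approach is to exploit the identity $\chi(v)=\scal\chi{\chi_{(v)}}\GF$, which holds since $\chi_{(v)}$ is the normalised characteristic function of the $\GF$-class of $v$, and to feed in the expansion of $\chi_{(v)}$ given by Theorem \ref{chi_u}. Since $\eta_{\bL_\CI}$ is a sign, $a_\cj$ is an integer, $\zeta_\CI$ is a root of unity, and the entries of $\wt P$ are polynomials in $q$ with integer coefficients, taking complex conjugates in the sesquilinear inner product produces
\[
\chi(v)=\sum_\CI\eta_{\bL_\CI}\zeta_\CI\inv\sum_{\ci\in\CI^F}\wt\CY_\ci(v)\sum_{\overset{\cj\ge\ci}{\cj\in\CI^F}}a_\cj\inv(\wt P(\wt P^*)\inv)_{\ci,\cj}\scal\chi{D\Gamma_\cj}\GF.
\]
Because $\CY_\ci$ is supported on $C_\ci^F$, only pairs $\ci$ whose support equals the geometric class $C(v)$ of $v$ contribute, and the condition $\cj\ge\ci$ then forces $C_\cj\supseteq C(v)$. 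Expanding $\Gamma_\cj=\sum_a\CY_\cj(u_a)\Gamma_{u_a}$ with $u_a\in C_\cj^F$ and applying the hypothesis shows that $\scal\chi{D\Gamma_\cj}\GF=0$ whenever $C_\cj$ strictly contains $C(v)$, so the inner sum is restricted to pairs $\cj$ lying in the same block as $\ci$ with $C_\cj=C(v)$.

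Next I would analyse the matrix entry $(\wt P(\wt P^*)\inv)_{\ci,\cj}$ when $\ci$ and $\cj$ both have support $C(v)$. Since $\wt P$ is upper unitriangular with respect to any linear extension of $\le$, any $\ck$ appearing in the product
\[
(\wt P(\wt P^*)\inv)_{\ci,\cj}=\sum_\ck\wt P_{\ci,\ck}(\wt P^*)\inv_{\ck,\cj}
\]
must satisfy $\ci\le\ck\le\cj$, which together with $C_\ci=C_\cj=C(v)$ forces $C_\ck=C(v)$ as well. As recalled in the proof of Corollary \ref{cor:regu}, on the diagonal sub-block indexed by pairs with a common support, both $\wt P$ and $(\wt P^*)\inv$ restrict to the identity; hence the entry collapses to $\delta_{\ci,\cj}$. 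Moreover $c_\ci$ depends only on the support of $\ci$ and on $\bL_\CI$, so every surviving $\wt\CY_\ci(v)=q^{c_\CI(v)}\CY_\ci(v)$ carries the same power of $q$, which can then be pulled out of the sum.

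Combining these reductions yields
\[
\chi(v)=\sum_\CI\eta_{\bL_\CI}\zeta_\CI\inv q^{c_\CI(v)}\sum_{\{\ci\in\CI^F\mid C_\ci=C(v)\}}a_\ci\inv\CY_\ci(v)\scal\chi{D\Gamma_\ci}\GF.
\]
The final step is to recognise the inner sum as $\scal\chi{D\Gamma_v^\CI}\GF$ by invoking the projection formula $\Gamma_v^\CI=\sum_\ci\overline{\CY_\ci(v)}a_\ci\inv\Gamma_\ci$ of \cite[Lemma 6.3]{DLM3} (already used in the proof of Corollary \ref{cor:regu}); the two conjugations of $\CY_\ci(v)$ cancel as needed. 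The most delicate point of the argument is the triangularity analysis in the second paragraph, which simultaneously kills all off-diagonal contributions of $\wt P(\wt P^*)\inv$ and isolates the single power of $q$ appearing in the answer.
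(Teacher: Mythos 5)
Your proof is correct and follows essentially the same route as the paper's: expand $\chi(v)=\scal\chi{\chi_{(v)}}\GF$ via Theorem \ref{chi_u}, kill the terms with strictly larger support using the hypothesis, reduce to $\cj=\ci$ via the identity diagonal blocks of $\wt P(\wt P^*)\inv$, and recognise $D\Gamma_v^\CI$ from \cite[Lemma 6.3]{DLM3}. If anything, your triangularity analysis in the second paragraph is spelled out more carefully than the paper's one-line assertion that the inner product vanishes unless $\cj=\ci$, which silently uses the same diagonal-block argument for pairs $\cj\neq\ci$ sharing the support of $\ci$.
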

\begin{proof}
We have $\chi(v)=\scal\chi{\chi_{(v)}}\GF$. By Theorem \ref{chi_u} this is equal to
$$\sum_\CI\eta_{\bL_\CI}\zeta\inv_\CI\sum_{\ci\in\CI^F}\tilde
\CY_\ci(v)\sum_{\cj\geq\ci}a_\cj\inv(\wt P(\tilde
P^*)\inv)_{\ci,\cj}\scal\chi{D\Gamma_\cj}\GF.$$
In this formula, if $\ci\in\CI^F$ yields a non-zero summand, then $C_\ci\owns v$
and as $\cj\geq\ci$, the inner product in the sum is zero unless $\cj=\ci$.
Hence we have $\chi(v)=
\sum_\CI\eta_{\bL_\CI}\zeta\inv_\CI\sum_{\ci\in\CI^F}\tilde \CY_\ci(v)a_\ci\inv\scal\chi{D\Gamma_\ci}\GF$,
which can be written
$$
\sum_\CI\eta_{\bL_\CI}\zeta\inv_\CI q^{c_\CI(v)}\scal\chi{a_\ci\inv\sum_{\ci\in\CI^F}\overline{\CY_\ci(v)}
D\Gamma_\ci}\GF,
$$
which completes the proof.
\end{proof}

\section{Character sheaves, \wavefront, Lusztig series and Families}\label{s:wavefront}

We begin with some background.

\subsection{Character sheaves}\label{ss:cs} These arise as follows. Our notation is similar
to that in \cite{US} and \cite{AA}, with some significant departures.
Let $\bL$ be a Levi subgroup of $\bG$ and let $\ci_{\bar \bL}:=(C,\xi)$ be
a cuspidal local system in $\bL/Z^0(\bL)$, where $C$ is a conjugacy class of the
latter group and $\xi$ is a local system on $C$.
Let $\bar{\CS}$ be a Kummer local system on the torus $\bL/[\bL,\bL]$.
We may then form the local system $\ci_{\bar \bL}\boxtimes \bar{\CS}$
on $\bL/Z^0(\bL)\times\bL/[\bL,\bL]$. The pullback of this local
system under the map $\bL\to \bL/Z^0(\bL)\times\bL/[\bL,\bL]$
is supported on $\Sigma:=Z^0(\bL)C$, and we denote it by $\ci_\bL\otimes
\CS$, where $\ci_\bL$ and $\CS$ are respectively the pullbacks of
$\ci_{\bar\bL}$ and $\bar\CS$.
The intersection complex extension $\IC(\ci_\bL\otimes \CS)[\dim\Sigma]$ is then a
cuspidal character sheaf \cite[3.12]{CS} \cite[2.5]{ICC},
and it is known \cite{Lu12} that this intersection complex is clean,
and that therefore is supported on $\Sigma$, and is equal to
$\ci_\bL\otimes \CS[\dim\Sigma]$ there.

We may now form the induced character sheaf ({\cf} \cite[(8.1.2), p. 237]{CS}), referred to
as $K$ in {\it loc.~cit.}, but which we shall also write
$\Ind_\bL^\bG((\ci_\bL\otimes \CS)[\dim\Sigma])$.
Lusztig has shown \cite[3.4]{ICC} that $\End(\Ind_\bL^\bG((\ci_\bL\otimes \CS)[\dim\Sigma]))\simeq\CA$, a finite dimensional
$\Qlbar$-algebra, isomorphic to a twisted group algebra of $W_\bG(\bL,\ci_\bL,\CS)$, the subgroup
of the relative Weyl group $\WGL=N_\bG(\bL)/\bL$ which fixes the Kummer system $\CS$ on $\bL$, as well as
the cuspidal pair $\ci_\bL$ described above.

It follows that
\be\label{eq:dec-ind}
K=\Ind_\bL^\bG((\ci_\bL\otimes \CS)[\dim\Sigma])\simeq
\oplus_{E\in\Irr(\CA)}A_{\ci_\bL,\CS,E}\otimes V_E,
\ee
where $\Irr(\CA)$ denotes the set of irreducible characters of $\CA$,
and for each $E\in\Irr(\CA)$,
$A_{\ci_\bL,\CS,E}$ is an irreducible character sheaf on $\bG$ and
$V_E$ is a $\Qlbar$-representation of $\CA$, with character $E$.
Generally, the data $\bL,\ci_\bL$ and $\CS$ will be fixed, and when there is
no risk of confusion, we write $A_E$ for $A_{\ci_\bL,\CS,E}$.
\begin{remark}\label{rem:cs-cusp}
We shall denote character sheaves by $A_E$, but will sometimes need to refer to the associated
cuspidal data. In that case, we write $(\bL,\ci_\bL,\CS)(E)=(\bL(E),\ci_\bL(E),\CS(E))$
for the relevant data.
\end{remark}

\subsection{Characteristic functions}\label{ss:cf} Now suppose that $\bL$ and $\Sigma$
above are $F$-stable. The stabiliser $\{wF\in\WGL F\mid
F^*\dot w^*(\ci_\bL\otimes \CS)\simeq (\ci_\bL\otimes \CS)\}$ is a subcoset
$W_\bG(\bL,\CS)w_1F\leq \WGL F$. Now for $E$ such that there is an isomorphism
${F}^*A_E\overset{\overset{\phi}{\sim}}{\lr}A_E$, we have an associated characteristic
function $\chi_{E,\phi}:\bG^{F}\to\Qlbar$, defined as an alternating sum on
the cohomology of $A_E$ in the usual way.

In the above situation, there is an isomorphism $\phi_0:F^*
\dot w_1^*K\to K$, which permutes the
canonical decomposition \eqref{eq:dec-ind}. Note that $F^*\dot w_1^*$ acts
on this decomposition as $F^*\ot F^*\dot w_1^*$.
Thus for each $E\in\Irr(\CA)$ such that
$F^*\dot w_1^*E\simeq E$, $\phi_0$ restricts to
$\phi_E\otimes\sigma_E\in\End(A_E\otimes V_E)$, and hence for each
choice of $\sigma_E$ defines
$\phi_E:F^*A_E\to A_E$. The associated characteristic function
$\chi_{E,\phi_E}:\GF\to\Qlbar$ is what we refer to as a \lf. The
various \lf s, suitably normalised,
form an orthonormal basis of the space of class functions on
$\GF$. For further details, see \cite[10.4, 10.6]{CS}.

\subsection{Families, Lusztig series and the \wavefront}\label{ss:wff}
The set $\hat\bG$ of character sheaves on $\bG$ is partitioned into families:
$\hat \bG=\amalg_{\CL,c}\hat\bG_{\CL,c}$, where $\CL$ is a Kummer system on
a fixed maximal torus $\bT$ of $\bG$ and $c$ is a family  in the group
$W_\bG(\CL)$ (see \cite[16.7 and 17.4]{CS} for this partition and
the definition of two-sided cells and
families in this group), two such pairs being considered equivalent if they are
conjugate under the Weyl group. Now it is shown in \cite[Thm. 10.7]{US} that
\begin{proposition}\label{family support}
Given  a family $(\CL,c)$ there  is a unique unipotent  class $C$, called the {\em
unipotent support} of the family, such that for any character sheaf $A_E\in
\hat\bG_{\CL,c}$,  its stalk at $g=su\in\bG$ (Jordan decomposition) is zero
if $\dim(u)\geq\dim C$ and $u\notin C$, and there exists $u\in C$ such that
the  stalk at $u$ is nonzero for some $A_E\in \hat\bG_{\CL,c}$.
\end{proposition}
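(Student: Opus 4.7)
The plan is to produce a candidate class $C$ directly from the family data $(\CL,c)$ and then verify both the vanishing and non-vanishing statements. The candidate is supplied by the generalized Springer correspondence for the pair $(\bG,\CL)$, which attaches to the family $c$ in $W_\bG(\CL)$ a well-defined unipotent class in $\bG$; this is the class that must serve as $C$.

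The first step is to reduce the stalk computation to one for an induced complex. By the decomposition \eqref{eq:dec-ind}, each $A_E \in \hat\bG_{\CL,c}$ is a direct summand of some $K=\Ind_\bL^\bG((\ci_\bL\otimes\CS)[\dim\Sigma])$ attached to a cuspidal datum $(\bL,\ci_\bL,\CS)$ compatible with $(\CL,c)$, so vanishing of the stalks of $K$ at a point forces vanishing for every $A_E$ at that point. The stalk of $K$ at $g=su$ is computable by the standard fibre formula for parabolic induction, as an alternating sum of cohomologies of varieties parameterizing parabolic conjugates of $g$ that land in $\Sigma = Z^0(\bL)C_0$, where $C_0 \subset \bL$ is the unipotent class supporting $\ci_\bL$. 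Cleanness of the cuspidal IC sheaf (\cite{Lu12}) restricts the contributing conjugates to those landing exactly on $\Sigma$, and a Lusztig-Spaltenstein dimension count then shows that the stalk vanishes unless $u$ lies in the closure of the induced unipotent class $\Ind_\bL^\bG(C_0)$; under the hypothesis $\dim u \geq \dim C$, equality of dimensions forces $u$ into the open orbit $\Ind_\bL^\bG(C_0)$ itself. This yields the stated vanishing once one identifies $\Ind_\bL^\bG(C_0)$ with $C$.

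The main obstacle is precisely this identification, together with its independence of the choice of cuspidal datum: different $A_E$ in the same family may be realized as summands of inductions from inequivalent cuspidal data on different Levi subgroups, and a priori the induced classes could differ. The required coherence is the content of Lusztig's theorem (\cite[16.7, 17.4]{CS}) partitioning $\hat\bG$ into families, in which all members of $\hat\bG_{\CL,c}$ share a common image under the generalized Springer correspondence, with the supporting class determined combinatorially by the $a$-function on $c$. For the non-vanishing half of the statement, cleanness furnishes at least one point $u$ in the open orbit of $\Ind_\bL^\bG(C_0)$ at which $K$ has a nonzero stalk in the top degree, and the decomposition \eqref{eq:dec-ind} then forces some summand $A_E \in \hat\bG_{\CL,c}$ to have nonzero stalk at $u$, completing the argument.
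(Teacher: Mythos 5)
There is a genuine gap, and it sits exactly where the real difficulty of the statement lies. Your reduction passes from the individual summand $A_E$ to the full induced complex $K=\Ind_\bL^\bG((\ci_\bL\otimes\CS)[\dim\Sigma])$ and then bounds the support of the stalks of $K$ by the closure of the induced class $\Ind_\bL^\bG(C_0)$. But the decomposition \eqref{eq:dec-ind} mixes character sheaves belonging to \emph{many different} families $(\CL,c)$, each with its own (generally much smaller) unipotent support, so a support bound for $K$ tells you essentially nothing about the family-dependent bound asserted in the proposition. The cleanest counterexample is the principal series: take $\bL=\bT$ with trivial cuspidal datum, so that $K$ is the Springer sheaf and its summands $A_E$ run over $\Irr(W)$, hence over \emph{all} unipotent principal-series families. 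Here $\Ind_\bT^\bG(1)$ is the regular class, so your bound ``stalks vanish off $\overline{\Ind_\bL^\bG(C_0)}$'' is vacuous, while the proposition claims, e.g., that the summand indexed by the sign representation has stalks concentrated near the trivial class. For the same reason the identification $C=\Ind_\bL^\bG(C_0)$ is false except for the single ``top'' family arising from each cuspidal datum, and your non-vanishing argument breaks down as well: the summand of $K$ that carries the top-degree stalk on the open orbit of $\Ind_\bL^\bG(C_0)$ need not lie in the family $(\CL,c)$ you started with.

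What is actually needed is a degree-by-degree analysis of the \emph{individual} isotypic components of $K$: one must control, via the $a$-function on the two-sided cell $c$ and the degrees of the (generalised) Green polynomials, in which cohomological degrees the $E$-isotypic part can contribute on each class, and compare with the dimension estimates for the fibres. This is precisely the content of Lusztig's \cite[Thm.~10.7]{US}, where $C$ is produced not by parabolic induction of classes but by taking the special representation of $c$, $j$-inducing to $W$, and applying the ordinary Springer correspondence (this is the recipe recalled after Definition \ref{wf(E)} in the paper). Note also that the paper itself does not reprove this: it quotes \cite[Thm.~10.7]{US} for the vanishing and \cite[(g), p.~172]{US} for the existence of a nonzero stalk on $C$. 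If you want a self-contained argument you would have to reproduce that $a$-function bookkeeping; the support bound on $K$ alone cannot be sharpened into the statement.
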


The last statement is a consequence of \cite[(g)  page  172]{US}.

The \lf s correspond to the $F$-stable sheaves in the $F$-stable
families $(\CL,c)$, and we may therefore identify the set of Lusztig
functions with $\hat\GF$ and consequently have a partition of this set as
$$
\cfs=\amalg_{(\CL,c):F(\CL,c)=(\CL,c)}\cfs_{\CL,c},
$$
and the spaces spanned by
the distinct $\cfs_{\CL,c}$ are orthogonal.

Correspondingly, there is a partition ({\cf} \cite[11.1]{US}) of the
irreducible characters, whose parts we call again families:
$$\Irr\GF= \amalg_{(\CL,c):F(\CL,c)=(\CL,c)}\Irr\GF_{(\CL,c)}.$$
These  partitions, both of characters and of character sheaves, are
defined  by the blocks of the matrix $\scal{\rho}{\chi_{E,\phi_E}}\GF$.
This allows us to identify families of irreducible characters and families of
character sheaves.
Thus  specifically,  $\scal{\rho}{\chi_{E,\phi_E}}\GF\ne  0$  only  if
both $\rho$ and $A_E$ belong to the family parameterised by the pair $(\CL,c)$.

\begin{definition}\label{def:ls}
The  {\em  (Lusztig)  series}  of  an  irreducible  character $\rho$ (resp.\
character sheaf $A_E$) is said to be the Kummer local system $\CL$ on $\bT$
if $\rho\in\Irr(\GF)_{\CL,c}$ (resp.\ $A_E\in \hat \bG_{\CL,c}$).

We say that $\rho$ (resp.\ $A_E$ or $\chi_{E,\phi_E}$) is {\em unipotent} if its
series  $\CL$ is equal to the trivial sheaf $\Qlbar$.
\end{definition}

Note  that the  Kummer system  $\CL$ on  the maximal  torus $\bT$  of $\bG$
corresponds  to a semisimple element $s\in\bG^*$,  the group dual to $\bG$.
We  may therefore write  $(s,c)$   for  the   family  $(\CL,c)$,  and  for
$s\in{\bG^*}^F$ we denote by $\CE(\GF,s)$ the Lusztig series
$\cup_c\Irr(\GF)_{s,c}$ of irreducible characters.
The   unipotent   characters   correspond  to  $s=1$,  or
equivalently $\CL=\Qlbar$.

Let $\chi$  be  an  irreducible  character  of  $\GF$. Lusztig has shown
\cite[11.2]{US} that the following definitions make sense.

\begin{definition}\label{def:wfs}
\begin{enumerate}
\item
The {\em \wavefront\ $\wf(\chi)$} of $\chi$ is the  largest unipotent class $C$
such that $\chi$ is  a component of the corresponding generalised Gelfand-Graev
representation $\Gamma_u$ for some $u\in C^F$.
\item The {\em unipotent support} of $\chi$ is the largest unipotent class $C$
such that $\chi$ has a non zero value on some element with unipotent part in
$C^F$.
\end{enumerate}
\end {definition}

In  the above, ``largest'' means that for any class of higher dimension, or
of  same dimension as  $C$ but different  from $C$, the multiplicity (resp.\
the  value) is $0$.
\begin{remark}\label{rem:us}
Proposition \ref{prop:val-us} applies in particular to any irreducible character
$\chi$ whose unipotent support is the class $(v)$. For in that case, if $(u)>(v)$, then
by Scholium \ref{sch:supdg}
$D\Gamma_u$ is supported on unipotent classes $(u')$ with $(u')\geq(u)>(v)$,
and $\chi$ vanishes on such classes, whence  $\scal\chi{D\Gamma_u}\GF=0$.
\end{remark}

Given a family $c$ of $W_\bG(\CL)$, let $c\ot\ve$ denote the family defined
by  the property that  $\psi\in c$ if  and only if $\psi\ot\ve\in c\ot\ve$,
where $\ve$ is the alternating character of $W$ restricted to $W_\bG(\CL)$.
With this notation, Lusztig \cite{US} has proved the following properties
of the sets defined above.
\begin{proposition}\label{usupp}
Let $\chi\in\Irr(\GF)_{(\CL,c)}$ be an irreducible character and let $C$ be
its unipotent support and $C'$ its \wavefront.
Then
\begin{enumerate}
\item There exists $u\in C^F$ such that $\chi(u)\ne 0$.
\item $C$ is the \wavefront\ of the Alvis-Curtis dual of $\chi$.
\item $C'$ is the unipotent support of $(\CL, c\ot\ve)$.
\end{enumerate}
\end{proposition}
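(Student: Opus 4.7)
The three statements form Lusztig's \cite[Thm.~11.2]{US}; the plan is to recover them using the tools of \S\ref{s:cf-ggg} together with Proposition \ref{family support}, establishing (i) first, then deducing (ii) formally from Scholium \ref{sch:supdg}, and finally obtaining (iii) by applying (ii) to $D\chi$ and identifying the family of $D\chi$.

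For (i), expand $\chi$ in the orthonormal basis of Lusztig functions: since $\chi\in\Irr(\GF)_{(\CL,c)}$, only the terms $\chi_{E,\phi_E}$ with $A_E$ in the family $(\CL,c)$ contribute. By Proposition \ref{family support}, for every such $A_E$ the stalk at $g=su$ with $\dim(u)\ge\dim C$ vanishes unless $u\in C$, so the same is true of each $\chi_{E,\phi_E}$ and hence of $\chi$; this shows $\chi$ vanishes on every class strictly larger than $C$ in the dimension partial order. To exhibit a purely unipotent $u\in C^F$ with $\chi(u)\ne 0$ I would restrict to $\GFuni$ and apply the description of $\chi_{E,\phi_E}|_{\GFuni}$ (as in Theorem \ref{prop:reslf}) as a combination of the $\wt\CX_\ci$ with $\ci$ in the block attached to the family: Proposition \ref{family support} guarantees that some $A_E$ in the family has non-zero stalk at a point of $C^F$, and the non-degeneracy of Lusztig's Fourier transform matrix on the family, combined with the linear independence of the $\wt\CX_\ci$ supported on $\ol C$, rules out cancellation in the expansion of $\chi$.

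For (ii), Alvis--Curtis duality is an isometry (up to sign) of $\CC(\GF)$, so the wave front $C''$ of $D\chi$ is the largest class satisfying $\scal{D\chi}{\Gamma_u}\GF\ne 0$ for some $u\in C''^F$, equivalently $\scal{\chi}{D\Gamma_u}\GF\ne 0$. Scholium \ref{sch:supdg} shows that $D\Gamma_u$ is supported on unipotent classes $\ge(u)$, so the inner product only picks up values of $\chi$ on such classes, which by (i) forces $(u)\le C$. Conversely, for $u\in C^F$ the support of $D\Gamma_u$ meets the non-vanishing locus of $\chi|_{\GFuni}$ on $C^F$ itself, and Theorem \ref{DGammaiv} furnishes an explicit non-zero expression for $D\Gamma_\ci(v)$ with $v\in C^F$; pairing with the non-vanishing values of $\chi$ on $C^F$ supplied by (i) produces a $u\in C^F$ with $\scal{\chi}{D\Gamma_u}\GF\ne 0$, whence $C''=C$.

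For (iii), applying (ii) with $\chi$ replaced by $D\chi$ shows that the wave front of $D(D\chi)=\chi$ equals the unipotent support of $D\chi$, so $C'$ is the unipotent support of $D\chi$. The Alvis--Curtis dual of a Lusztig function is again a Lusztig function, and via Lusztig's formula expressing $D$ in terms of the Fourier transform matrix and the sign character of the Weyl group, the induced map on families sends $(\CL,c)$ to $(\CL,c\ot\ve)$; hence $D\chi\in\Irr(\GF)_{(\CL,c\ot\ve)}$. Applying the strengthened form of (i)---namely, that the unipotent support of any irreducible character equals the unipotent support of its family in the sense of Proposition \ref{family support}---to $D\chi$ then yields that $C'$ equals the unipotent support of $(\CL,c\ot\ve)$. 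The main obstacle is the non-cancellation step in (i): it rests on Lusztig's rigidity statements for the Fourier transform matrix within a family and on tight control of the leading coefficients in the expansion of $\chi_{E,\phi_E}|_{\GFuni}$ along the top class $C$ of the family.
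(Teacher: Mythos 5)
The paper does not reprove this proposition: all three parts are quoted verbatim from Lusztig \cite[Thm.~11.2]{US} (together with \cite[11.1]{US} for the identification of families), so your attempt to reconstruct them from the machinery of \S\ref{s:cf-ggg} must supply the hard content of Lusztig's theorem itself, and at two points it does not. In (i) the entire difficulty is concentrated in the ``non-cancellation'' step, which you defer to ``rigidity statements for the Fourier transform matrix'' without argument. Knowing that some $\chi_{E,\phi_E}$ in the family has non-zero restriction to $C$, and that the $\wt\CX_\ci$ are linearly independent, does not prevent the particular combination $\chi=\sum_E\scal{\chi}{\chi_{E,\phi_E}}\GF\,\chi_{E,\phi_E}$ from having zero component along every $\wt\CX_\ci$ supported on $C$; excluding this is essentially the whole of Lusztig's proof of 11.2(v) (via generalised Gelfand-Graev characters and explicit properties of the non-abelian Fourier transform), not a linear-algebra remark. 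You also tacitly identify the unipotent support of $\chi$ (Definition \ref{def:wfs}(ii)) with the unipotent support of its family (Proposition \ref{family support}); these are a priori different classes, and the paper explicitly states, in the paragraph following the proposition, that it does not know this identification in general.

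The same issue is fatal to your (iii): it rests on the claims that $D\chi\in\Irr(\GF)_{(\CL,c\ot\ve)}$ and that the unipotent support of an irreducible character equals that of its family -- precisely the two assertions the paper singles out as ones it will \emph{not} assume, the first being known only for connected centre (via \cite[6.14]{livre}) and the second being stated only as an expectation. Lusztig's statement (iii) concerns the unipotent support of the \emph{family} $(\CL,c\ot\ve)$ and is proved through the $a$-function and generalised Gelfand-Graev representations, not by dualising $\chi$. Your deduction of (ii) from (i), Scholium \ref{sch:supdg} and Theorem \ref{DGammaiv} is formally sound (it parallels Remark \ref{rem:us} and Proposition \ref{prop:val-us}), but it inherits the gap in (i). In short, the proposal replaces a citation by an outline whose missing steps are exactly the substance of the cited theorem.
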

\begin{proof}
(i) is \cite[Thm. 11.2(v)]{US},
(ii) is \cite[Thm. 11.2(iv)]{US}.
(iii) is \cite[Thm. 11.2(i) and (iii)]{US}, taking into account
\cite[11.1]{US}.
\end{proof}

One would expect that in the  situation of Proposition \ref{usupp}, $C$ is
the  unipotent support  of $(\CL,c)$; that is, any character $\chi\in\Irr(\GF)_{(\CL,c)}$
has the same unipotent support, and this support is the unipotent support of
$(\CL,c)$.  We shall  not require  this in  the current work.
It would be a consequence of the assertion that if $\chi\in\Irr(\GF)_{(\CL,c)}$
then the Alvis-Curtis dual $D\chi$ of $\chi$ is in
$\Irr(\GF)_{(\CL,c\ot\ve)}$.  If  $\bG$  has  connected  center this last fact
follows from \cite[6.14]{livre}.

%
%
%
%

In view of the properties \ref{usupp}, we make the following definition.
\begin{definition}\label{wf(E)}
The  {\em \wavefront}  of a  family $(\CL,c)$  is the unipotent support of
$(\CL,c\otimes\ve)$;  equivalently,  it is  the  common \wavefront\ of all
characters  in  $\Irr(\GF)_{\CL,c}$.  If  $A_E\in\hat\bG_{\CL,c}$  we shall
denote by $\wf(E)$ the \wavefront\ of its family and $\supp(E)$ the
unipotent support of
its family.
\end{definition}

The following explicit description of the map between families
and their \wavefront\ may be found in \cite[10.5, 10.6]{US}.
Let $(\CL,c)$ be a family, and let $s\in\bG^*$ be a semi-simple element
corresponding to $\CL$. Let us
write $W'(s)\simeq W_\bG(\CL)$ for the Weyl group of the
not necessarily  connected  group  $C_{\bG^*}(s)$. The  Weyl  group of the
identity  component is denoted by $W(s)$. Let $E$
be  a special representation of $c\ot\ve$; by definition, its restriction
to  $W(s)$ is a sum  of special representations; let  $E_1$ be one of these
and consider $j_{W(s)}^W(E_1)$. This is an irreducible representation of
$W$, and its Springer correspondent is supported by a unipotent class which is
independent of the choice of $E_1$ and is the unipotent support of
$(\CL,c\ot\ve)$ \ie\ the \wavefront\ of $(\CL,c)$.

Conversely, given a unipotent class $C$, we obtain
Lusztig  families having $C$ as \wavefront\  as  follows.
First note, that for a sufficiently
large power $F^m$ of $F$, since the \ggg s
of $G^{F^m}$ form a basis of the space of unipotently supported class
functions, each unipotent class is the \wavefront\
of  some irreducible character, hence of some family $(\CL,c)$.
Accordingly, if $E'$  is the Springer correspondent  of the  pair $(C,\Qlbar)$,
then using the above description of the \wavefront, we see that
there is always an $s$ such that the  $j$-restriction of
$E'$ to $W(s)$ is a  non zero special  representation $E$ of $W(s)$. For
any  such $s$ this $E$  defines a family of  $W(s)$, and hence families $c'$ of
$W'(s)$.  The class $C$ is now the \wavefront\ of any  character in one of the
families $(s,c'\otimes \ve)$.

\begin{definition}
We say that a character is regular (resp.\ subregular) if its \wavefront\ is the
regular (resp.\ a subregular) class.
\end{definition}

Given the above description of the \wavefront, we may characterise the
subregular characters in a Lusztig series as follows.

\begin{lemma}\label{lem:subregchar} Let $\bG$ be simply connected.
For  any $s$ the  subregular characters in  the Lusztig series $\CE(\GF,s)$
are precisely the characters in the family $(s, c')$, where $c'$ is a family of
$W'(s)$ whose restriction to $W(s)$
contains one of the characters $\ve\otimes r_i$ where $r_i$ is the
reflection representation of the $i$-th irreducible component of $W(s)$.
\end{lemma}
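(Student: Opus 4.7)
My plan is to combine the explicit description of the wavefront of a family given at the end of Section~\ref{s:wavefront} with the Springer-theoretic identification of the subregular unipotent class. A character $\chi$ in the Lusztig family $(s,c')$ is subregular iff $\wf(\chi)$ is subregular, which by Definition~\ref{wf(E)} and Proposition~\ref{usupp}(iii) is equivalent to the unipotent support of $(s,c'\otimes\ve)$ being a subregular class. The problem thus reduces to characterising those families $c'$ of $W'(s)$ for which $(s,c'\otimes\ve)$ has subregular unipotent support.

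I apply the recipe given after Definition~\ref{wf(E)}: this unipotent support is the Springer support of $j_{W(s)}^W(E_1)$, where $E$ is the special representation of the family $c'\otimes\ve$ and $E_1$ is any special constituent of $E|_{W(s)}$. I then rely on three facts: (a) the Springer correspondent of the subregular class of the $j$-th simple factor of $\bG$ with trivial local system is the reflection representation $r_j$ of the $j$-th irreducible component of $W$, with $b$-invariant equal to $1$; (b) $j$-induction takes special representations to special representations and preserves $b$-invariants; and (c) the irreducibles of a Weyl group with $b$-invariant $1$ are precisely the reflection representations of its irreducible components. These give, on one hand, that if the wavefront is subregular then $j_{W(s)}^W(E_1)$ is special with subregular Springer support, hence equal to some $r_j$, forcing $E_1=r_i$ for some $i$; and, on the other, that if $r_i$ appears in $E|_{W(s)}$, then $j_{W(s)}^W(r_i)=r_j$ has subregular Springer support.

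To recast the condition in terms of $c'$ itself, I set $F:=E\otimes\ve$, which lies in $c'$ by the involutivity of the $\ve$-twist on families. Since $W(s)$ is generated by reflections of $W$, the restriction of the sign character of $W$ to $W(s)$ coincides with the sign character $\ve_{W(s)}$ of $W(s)$, so $E|_{W(s)}=F|_{W(s)}\otimes\ve_{W(s)}$; hence $r_i\in E|_{W(s)}$ is equivalent to $\ve\otimes r_i\in F|_{W(s)}$, matching the statement.

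The main subtlety will be in the converse direction: the lemma allows \emph{any} representative of $c'$ whose restriction to $W(s)$ contains $\ve\otimes r_i$, whereas the wavefront recipe singles out the distinguished special representative of $c'\otimes\ve$. Reconciling these requires invoking Lusztig's compatibility of the family decomposition with restriction to reflection subgroups, together with the fact that $r_i$ is itself the unique special representative of its family in $W(s)$, so that it must appear in the restriction of the special member of $c'\otimes\ve$ whenever it appears in the restriction of any member.
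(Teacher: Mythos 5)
Your proposal is correct and follows essentially the same route as the paper's proof: both reduce the question, via the wavefront recipe following Definition~\ref{wf(E)}, to deciding which special constituents $E_1$ of the restriction to $W(s)$ have $j$-induction with subregular Springer support, and both rest on the case-by-case identification of the pair (subregular class, trivial local system) with the reflection representation, which is special. The only real difference is in the last step, where the paper explicitly computes $\Res^W_{W(s)}$ of the reflection representation (the sum of the $r_i$ plus copies of the trivial character) and takes its $j$-restriction, whereas you argue via $b$-invariants; your version leaves slightly implicit the standard fact that a special representation supported on the subregular class must correspond to the trivial local system.
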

\begin{proof}
A  case-by-case check (see  for example table  \cite[4.1]{DLM3}) shows that
when  $W$ is irreducible, the trivial  local system on the unique subregular
class $C$ corresponds to the reflection representation, a special character
of  $W$ (for this last  fact the reflection representation  is alone in its
family  in simply laced types by \cite{RR}; for the other types one may use
the description in \cite{SR}). It follows that in general the components of
the  reflection  representation  correspond  to  the  trivial system on the
various subregular classes.

But the restriction to $W(s)$ of the
reflection representation is the sum of the $r_i$ and
$(\rank(W)-\rank(W(s)))$  times the identity; hence the $j$-restriction
of the reflection representation is
the sum of the $r_i$, and the result follows.
\end{proof}

\subsection{Multiplicities in generalised Gelfand-Graev representations}\label{ss:princbl}

Let  $\CI$ be the  principal block, attached to the cuspidal pair
$(\bT,\ci_0)$, where $\ci_0$ is the pair $(1,\Qlbar)$.
The  aim of this subsection is to compute some
multiplicities  of characters in  $\Gamma_\ci$  for  $\ci  \in\CI$.

For $s\in(\bG^*)^F$, denote  by $w_1F$ the type of a maximally split torus of the centraliser of
$s$.  For  a  $W(s)$-class  function  $f$  on  $W(s)w_1F$ we define a class
function  in  $\CE(\GF,s)$  by  $R_f:=\frac  1{|W(s)|}\sum_{y\in  W(s)w_1F}
f(y)R_{\bT_y}^\bG(\theta)$,    where    $R_{\bT_y}^\bG(\theta)$    is   the
Deligne-Lusztig  character  where  for  $y\in  WF$  we denote by $\bT_y$ an
$F$-stable  torus such that $(\bT_y,F)$ is $\bG$-conjugate to $(\bT,y)$ and
$\theta\in\Irr(\bT_y^F)$ corresponds to $s\in{\bG^*}^F$.

\begin{proposition}\label{prop:projgamma}
For $\ci$ in the principal block $\CI$, the projection of $\Gamma_\ci$ onto
$\CE(\GF,s)$ is, with notation as above
\begin{equation}\label{eq:projgamma}
a_\ci\sum_{\cj\in\CI^F}\wt P^*_{\ci,\cj}
R_{\Res^{WF}_{W(s)w_1F}(\wt{\vp_\cj}\otimes\wt\ve)},
\end{equation}
where  $\vp_\cj\in\Irr(W)^F$  is  the  Springer  correspondent of $\cj$ and
$\wt{\vp_\cj}$ and $\tilde\ve$ are preferred extensions  to $WF$, in particular
$\tilde\ve(vF)=\ve(v)$ (see \cite[17.2]{CS}).
\end{proposition}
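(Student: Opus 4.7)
The plan is to evaluate both sides of the proposed identity against the Deligne--Lusztig characters $R_{\bT_y}^\bG(\theta)$ for $y\in W(s)w_1F$, which span the uniform subspace of $\CE(\GF,s)$. Using the standard orthogonality $\langle R_{\bT_y}^\bG(\theta),R_{\bT_{y'}}^\bG(\theta)\rangle_\GF = \delta_{[y],[y']}|C_{W(s)}(y)|$ (classes taken in $W(s)w_1F$ under $W(s)$-conjugation), one immediately checks that $\langle R_g,R_{\bT_y}^\bG(\theta)\rangle_\GF = g(y)$ for any class function $g$ on $W(s)w_1F$. Hence the proposition reduces to: (a) the pairing identity
\[
\langle\Gamma_\ci,R_{\bT_y}^\bG(\theta)\rangle_\GF \;=\; a_\ci\sum_{\cj\in\CI^F}\wt P^*_{\ci,\cj}\,\wt{\vp_\cj}(y)\,\wt\ve(y)\qquad(y\in W(s)w_1F),
\]
together with (b) the assertion that the orthogonal projection of $\Gamma_\ci$ onto $\CE(\GF,s)$ actually lies in the uniform subspace.

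For (a), I would first apply Alvis--Curtis duality: $D$ is self-adjoint and $DR_{\bT_y}^\bG(\theta)=\ve_\bG\ve_{\bT_y}R_{\bT_y}^\bG(\theta)$, where in the preferred-extension conventions the sign $\ve_\bG\ve_{\bT_y}$ equals $\wt\ve(y)$. This gives
\[
\langle\Gamma_\ci,R_{\bT_y}^\bG(\theta)\rangle_\GF \;=\; \wt\ve(y)\,\langle D\Gamma_\ci,R_{\bT_y}^\bG(\theta)\rangle_\GF.
\]
Scholium \ref{sch:supdg} tells me that $D\Gamma_\ci$ is unipotently supported, and since $\theta$ is trivial on $\GFuni$, the restriction $R_{\bT_y}^\bG(\theta)|_{\GFuni}$ is the ordinary Green function $Q_y$. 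I then substitute the Springer--Lusztig expansion $Q_y=\sum_{\cj\in\CI^F}\wt{\vp_\cj}(y)\wt\CX_\cj$ in the principal block and the identity $\langle D\Gamma_\ci,\wt\CX_\cj\rangle_\GF=\eta_{\bL_\CI}\zeta_\CI^{-1}a_\ci\wt P^*_{\ci,\cj}$ extracted from the proof of Theorem \ref{chi_u}. For the principal block $\CI$ one has $\bL_\CI=\bT$, hence $\eta_{\bL_\CI}=\zeta_\CI=1$, yielding
\[
\langle D\Gamma_\ci,Q_y\rangle_\GF \;=\; a_\ci\sum_{\cj\in\CI^F}\wt P^*_{\ci,\cj}\,\wt{\vp_\cj}(y),
\]
which combined with the duality step produces (a).

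The main obstacle is (b): one must show that $\langle\Gamma_\ci,\chi\rangle_\GF = 0$ for every $\chi\in\CE(\GF,s)$ lying in the orthogonal complement of the span of the $R_{\bT_y}^\bG(\theta)$. By self-adjointness of $D$ this is equivalent to $\langle D\Gamma_\ci,D\chi\rangle_\GF=0$, and since $D\Gamma_\ci$ is unipotently supported (Scholium \ref{sch:supdg}), only the restriction $(D\chi)|_{\GFuni}$ is relevant. I expect this to follow from the fact, established in step (a), that $D\Gamma_\ci$ is an explicit combination of the principal-block basis $\{\wt\CX_\cj\}_{\cj\in\CI^F}$, together with Lusztig's description of the unipotent restrictions of non-uniform characters in $\CE(\GF,s)$: such restrictions decompose through characteristic functions of character sheaves in blocks other than the principal one, and are therefore orthogonal in $\CC_\uni(\GF)$ to the principal-block expression of $D\Gamma_\ci$. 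Once (b) is in hand, comparing (a) with the inner product formula $\langle R_g,R_{\bT_y}^\bG(\theta)\rangle_\GF=g(y)$ produces the desired identification of the projection with $R_f$ for $f = a_\ci\sum_\cj\wt P^*_{\ci,\cj}\Res^{WF}_{W(s)w_1F}(\wt{\vp_\cj}\otimes\wt\ve)$.
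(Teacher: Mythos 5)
Your step (a) is sound and reproduces the correct coefficients: for the principal block $\eta_{\bL_\CI}=\zeta_\CI=1$, the identity $\scal{D\Gamma_\ci}{\wt\CX_\cj}\GF=a_\ci\wt P^*_{\ci,\cj}$ is indeed available from the proof of Theorem \ref{chi_u}, and the duality sign $\ve_\bG\ve_{\bT_y}=\wt\ve(y)$ is the standard one. The weak point is (b), and the justification you offer for it is not correct as stated: it is \emph{not} true that the unipotent restriction of an individual non-uniform character in $\CE(\GF,s)$ lives in non-principal blocks (the non-uniform unipotent characters in the four-element family of $\Sp_4$, say, restrict with nonzero principal-block components). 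What you actually need is the statement for the orthogonal complement of the whole uniform subspace, and the missing idea that delivers it is this: the extension by zero of the Green function $Q_y$ from $\GFuni$ to $\GF$ is itself a uniform function, namely
$Q_y^0=|\bT_y^F|\inv\sum_{\theta\in\Irr(\bT_y^F)}R_{\bT_y}^\bG(\theta)$,
by the character formula (the value of $R_{\bT_y}^\bG(\theta)$ at $su$ vanishes after summing over $\theta$ unless $s=1$). Since the $Q_y^0$ span the same space as the principal-block $\wt\CX_\cj$, your own step (a) then shows that $D\Gamma_\ci$, hence $\Gamma_\ci$, is a uniform class function; consequently its projection onto $\CE(\GF,s)$ automatically lies in the span of the $R_{\bT_y}^\bG(\theta)$ with $\theta\leftrightarrow s$, which is exactly (b). Without this observation your argument does not close, because a class function orthogonal to the uniform space need not have unipotent restriction visibly ``in other blocks'' by any result you can quote directly.

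Once repaired, your route is genuinely different from the paper's and essentially re-proves its main input. The paper simply cites \cite[Proposition 6.1]{DLM3}, which gives $\Gamma_\ci=a_\ci Q^\bG(\wt\ve\,\CZ_\bT\,\wt{Q_\ci^*})$, i.e.\ an \emph{explicit} expansion of $\Gamma_\ci$ over all Deligne--Lusztig characters $R_{\bT_y}^\bG(\theta)$ (using precisely that $\CX_{(1,\Qlbar),w}=|\bT_y^F|\inv\sum_\theta\theta$); both the coefficients and the uniformity come for free, and the only work left is the orbit-counting that converts the sum over $W$-orbits of pairs $(y,t)$ into a sum over $W(s)w_1F$, producing $R_f$. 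Your approach buys an independent derivation of the coefficients from the $\scal{D\Gamma}{\wt\CX}{}$ formulas already proved in \S\ref{s:cf-ggg}, at the cost of having to establish uniformity separately; the paper's approach is shorter but leans on an external reference.
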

\begin{proof}
Lusztig's formulas
\cite[\S7.5  (b) and p.176  (b), proof of  11.2]{US}, suitably modified for
the  non-split case may be applied to give  \cite[Proposition 6.1]{DLM3} which, applied to the
principal block yields
$$\Gamma_\ci=a_\ci Q^\bG(\tilde\ve\CZ_\bT\wt{Q_\ci^*}),$$
where $\CZ_\bT$ is
the function on $WF$ given by
$\CZ_\bT(y)=y\mapsto|\bT_y^F|$,
$\wt{Q_\ci}(y)=\sum_{\cj\in\CI^F}\wt{\vp_\cj}(y)\wt P_{\ci,\cj}$,
and $Q^\bG$ is the map which sends the function $f$ to
$\frac 1{|W|}\sum_{y\in WF}f(y)R^\bG_{\bT_y}(\CX_{(1,\Qlbar),w})$. But
$\CX_{(1,\Qlbar),w}=|\bT_y ^F|\inv\sum_{\theta\in\Irr(\bT_y^F)}\theta$
is the characteristic
function of the identity (see \cite[Definition 3.1(iii) and
Proposition 3.2]{DLM3}). It follows that
$$
\Gamma_\ci=\frac{a_\ci}{|W|}\sum_{\cj\in\CI^F}\wt P^*_{\ci,\cj}
\sum_{y\in WF,\theta\in\Irr(\bT_y^F)}(\wt{\vp_\cj}\otimes\tilde\ve)(y)
R_{\bT_y}^\bG(\theta).
$$
Identifying characters with elements of the dual group, the inner sum can be
written
$$
\sum_{y\in WF,t\in\bT^{*y}}(\wt{\vp_\cj}\otimes\tilde\ve)(y)
R_{\bT_y}^\bG(t).
$$
Let  $\chi$ be an irreducible character in the series $\CE(\GF,s)$. Then in
the expansion of the inner  product  $\scal{\Gamma_\ci}\chi\GF$,  only  the summands where
$(y,t)$  is $W$-conjugate to some $(y_0,s)$ could be non  zero, and
if $(y,t)= (y_0,s)$ we have $y_0\in W(s)w_1F$. Further, a term
$(\wt{\vp_\cj}\otimes\tilde\ve)(y)R_{\bT_y}^\bG(t)$ depends only on the
$W$-conjugacy class of $(y,t)$.
Since the number of pairs $(y,t)$
conjugate to $(y_0,s)$ is $\frac{|W|}{|C_{W(s)}(y_0)|}$ and the number of
pairs $(y',s)$ conjugate to $(y_0,s)$ is
$\frac{|W(s)|}{|C_{W(s)}(y_0)|}$, we get
$$
\scal{\Gamma_\ci}\chi\GF=
\frac{a_\ci}{|W(s)|}\sum_{\cj\in\CI^F,y'\in
W(s)w_1F}(\wt\vp_\cj\otimes\wt\ve)(y)\wt P^*_{\ci,\cj}
\scal{R_{\bT_{y'}}^\bG(s)}\chi\GF,
$$
whence the proposition follows, given the definition of $R_f$.
\end{proof}

\begin{proposition}\label{prop:wf-ac-pb} Let $\chi\in\CE(\GF,s)$ have
\wavefront\ $C$, and let $\ci$ be in the principal block. Then
$$\scal{R_{\Res^{WF}_{W(s)w_1F}(\wt{\vp_\ci}\otimes\wt\ve)}}\chi\GF=0$$
unless $\dim\supp\ci<\dim C$ or $\supp\ci=C$.
\end{proposition}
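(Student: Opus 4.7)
The plan is to invert the expansion of Proposition \ref{prop:projgamma} and then invoke the defining property of the wave front set. Write $R_\cj := R_{\Res^{WF}_{W(s)w_1F}(\wt{\vp_\cj}\otimes\wt\ve)}$ and let $\Gamma_\cj^{(s)}$ denote the projection of $\Gamma_\cj$ to $\CE(\GF,s)$. Proposition \ref{prop:projgamma} asserts $\Gamma_\ci^{(s)}=a_\ci\sum_{\cj\in\CI^F}\wt P^*_{\ci,\cj}R_\cj$. Since $\wt P$ is upper unitriangular in any total order refining $\leq$, so is $\wt P^*$, and its inverse $N=(N_{\ci,\cj})$ is upper unitriangular as well; in particular $N_{\ci,\cj}=0$ unless $\ci\leq\cj$. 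Solving for $R_\ci$ gives
$$R_\ci=\sum_{\cj\geq\ci,\,\cj\in\CI^F}\frac{N_{\ci,\cj}}{a_\cj}\Gamma_\cj^{(s)}.$$

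Pair both sides with $\chi$. Since $R_\ci$ and $\chi$ both lie in $\CE(\GF,s)$, one has $\scal{\Gamma_\cj^{(s)}}{\chi}\GF=\scal{\Gamma_\cj}{\chi}\GF$, and hence
$$\scal{R_\ci}{\chi}\GF=\sum_{\cj\geq\ci}\frac{N_{\ci,\cj}}{a_\cj}\scal{\Gamma_\cj}{\chi}\GF.$$
By Definition \ref{def:wfs}, $\scal{\Gamma_u}{\chi}\GF=0$ for every rational unipotent $u$ whose geometric class $(u)$ satisfies $\dim(u)>\dim C$, or $\dim(u)=\dim C$ with $(u)\neq C$. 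Since $\Gamma_\cj=\sum_a\CY_\cj(u_a)\Gamma_{u_a}$ with $u_a\in(\supp\cj)^F$, this yields $\scal{\Gamma_\cj}{\chi}\GF=0$ whenever $\supp\cj$ satisfies the same condition.

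Finally, suppose $\supp\ci$ lies in the forbidden range, \ie\ either $\dim\supp\ci>\dim C$, or $\dim\supp\ci=\dim C$ and $\supp\ci\neq C$. For every $\cj\geq\ci$ in $\CI^F$ the containment $\supp\ci\subseteq\ol{\supp\cj}$ forces $\dim\supp\cj\geq\dim\supp\ci$, with equality forcing $\supp\cj=\supp\ci$. A short case analysis in the two subcases for $\supp\ci$ shows that $\supp\cj$ also lies in the forbidden range, so $\scal{\Gamma_\cj}{\chi}\GF=0$ for every $\cj$ appearing in the sum, and therefore $\scal{R_\ci}{\chi}\GF=0$, as required. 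The one point demanding care is the triangular inversion of $\wt P^*$; once its inverse is known to be supported on pairs $\cj\geq\ci$ within the block, propagating the vanishing of $\scal{\Gamma_\cj}{\chi}\GF$ through the two subcases on $\supp\ci$ is routine.
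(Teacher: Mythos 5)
Your argument is correct, but it takes a genuinely different route from the paper. The paper's proof never touches Proposition \ref{prop:projgamma}: it assumes the inner product is nonzero, observes that the family $(s,c)$ of $\chi$ must then be the family of some component $\psi$ of $\Res^{WF}_{W(s)w_1F}(\wt{\vp_\ci}\otimes\wt\ve)$, and applies Lusztig's $a$-function estimates \cite[Cor. 10.9(i),(ii)]{US} together with part (h) of the proof of \cite[Thm. 10.7]{US} to get $\beta_{\supp\ci}\geq a(c\otimes\ve)=\beta_C$, hence $\dim\supp\ci\leq\dim C$ with equality forcing $\supp\ci=C$. You instead invert the unitriangular system of Proposition \ref{prop:projgamma} and propagate the vanishing $\scal{\Gamma_{u_a}}{\chi}\GF=0$ guaranteed by the maximality clause in Definition \ref{def:wfs}; the case analysis on the ``forbidden range'' is carried out correctly, and the key auxiliary fact you need --- that $(\wt P^*)\inv$ is supported on pairs $\ci\leq\cj$ in the \emph{partial} order, not merely upper triangular for a chosen total order --- is the standard statement that a unitriangular element of the incidence algebra of a poset has its inverse in that incidence algebra; it would be worth saying this explicitly, since it is exactly what licenses the containment $\supp\ci\subseteq\ol{\supp\cj}$ in your last step. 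What each approach buys: yours is formally self-contained given the paper's earlier results and exploits the fact that $\Gamma_u$ is an actual character, whereas the paper's is independent of Proposition \ref{prop:projgamma} (the two results are only combined later, in Corollary \ref{ci subreg}) and makes the geometric mechanism ($\beta$- and $a$-invariants) visible. Note, however, that your route does not really avoid \cite[\S\S10--11]{US}: the well-definedness of the \wavefront\ invoked through Definition \ref{def:wfs} is \cite[11.2]{US}, which Lusztig proves with the same $a$-function machinery, so the dependence is relocated rather than removed.
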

\begin{proof}
Assume $\scal{R_{\Res^{WF}_{W(s)w_1F}(\wt{\vp_\ci}\otimes\wt\ve)}}\chi\GF\ne0$
and let $(s,c)$ be the family of $\chi$, which is also (see
section  \ref{ss:wff}) the family of some component $\psi$ of
$\Res^{WF}_{W(s)w_1F}(\wt{\vp_\ci}\otimes\wt\ve)$.
For a unipotent class $C$,
let us denote by $\beta_C$ the dimension of the variety of
Borel subgroups containing an  element of $C$.
By \cite[Corollary 10.9 (i)]{US} applied with $E'=\psi\otimes\ve$, we have
$\beta_{\supp\ci}\geq a(c\otimes\ve)$. We also know by part (h) of
the proof of \cite[Thm. 10.7]{US} that $a(c\otimes\ve)=\beta_C$ where
$C$ is the class $C(s,c\otimes\ve)$ as in \cite[10.5]{US}, that is (see the
beginning of section \ref{s:wavefront}) $C$ is the \wavefront\ of
all characters in the family $c$. So
if $\scal{R_{\Res^{WF}_{W(s)w_1F}(\wt{\vp_\ci}\otimes\wt\ve)}}\chi\GF\neq 0$
then  $\dim\supp\ci\leq \dim C$.  Moreover by \cite[Corollary
10.9(ii)]{US},
if equality pertains, then $\supp\ci=C$.
\end{proof}
Using the above propositions we now prove
\begin{corollary} \label{ci subreg}
The assumptions and notation being as in the above proposition,
let $\ci$ be a pair in the principal block.
\begin{enumerate}
\item If $\chi\in\CE(\GF,s)$ has \wavefront\ $\supp\ci$ then
$$\scal{\Gamma_\ci}\chi\GF=
a_\ci\scal{R_{\Res^{WF}_{W(s)w_1F}(\wt{\vp_\ci}\otimes\wt\ve)}}\chi\GF$$
\item
If $\ci$ has subregular support, $\chi\in\CE(\GF,s)$ is regular, and
$\bG$ has connected centre, then
$$\scal{\Gamma_\ci}\chi\GF= \begin{cases}
a_\ci(q+\scal{\Res^{WF}_{W(s)w_1F}\wt{\vp_\ci}}\Id{W(s)w_1F})&
\text{if $\ci$ is standard}\\
a_\ci\scal{\Res^{WF}_{W(s)w_1F}\wt{\vp_\ci}}\Id{W(s)w_1F}&
\text{otherwise.}
\end{cases}
$$
\end{enumerate}
\end{corollary}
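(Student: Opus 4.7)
The strategy for both parts is to start from Proposition~\ref{prop:projgamma}, which expresses
$$\scal{\Gamma_\ci}\chi\GF = a_\ci \sum_{\cj\in\CI^F} \wt P^*_{\ci,\cj}\scal{R_{\Res^{WF}_{W(s)w_1F}(\wt\vp_\cj \otimes \wt\ve)}}\chi\GF,$$
and to identify which $\cj$ actually contribute by combining triangularity of $\wt P$ with Proposition~\ref{prop:wf-ac-pb}.

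For (i), I would argue as follows. Since $P_{\ck,\ci}$ vanishes unless $\ck\le\ci$, the condition $\wt P^*_{\ci,\cj}\neq 0$ forces $\ci\le\cj$, and in particular $\dim\supp\ci\le\dim\supp\cj$. Proposition~\ref{prop:wf-ac-pb} supplies the opposite constraint: the inner product on the right vanishes unless $\dim\supp\cj<\dim\wf(\chi)$ or $\supp\cj=\wf(\chi)$. Since $\wf(\chi)=\supp\ci$ by hypothesis, the two constraints together force $\supp\cj=\supp\ci$. Using the observation (recalled in the proof of Corollary~\ref{cor:regu}) that $\wt P$ restricted to pairs sharing a fixed support is the identity, only $\cj=\ci$ survives, yielding (i).

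For (ii), I would use the explicit description of $\wt P$ at a subregular pair recorded in the proof of Corollary~\ref{ratsubreg}: $\wt P_{\ci,\cj}=\delta_{\ci,\cj}$ unless $\ci$ is standard, in which case additionally $\wt P_{\ci,\ci_1}=q\inv$. Taking $*$-conjugates gives $\wt P^*_{\ci,\ci}=1$ and, in the standard case, $\wt P^*_{\ci,\ci_1}=q$, with no other nonzero entries in row $\ci$. Consequently the sum in Proposition~\ref{prop:projgamma} collapses to the $\cj=\ci$ term together with, when $\ci$ is standard, $q$ times the $\cj=\ci_1$ term. Both surviving supports (subregular and regular) are admissible under Proposition~\ref{prop:wf-ac-pb} since $\wf(\chi)$ is the regular class.

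The remaining step is to simplify the two surviving inner products. The key input is the identity $\scal{R^\bG_{\bT_y}(\theta)}\chi\GF=\wt\ve(y)$, valid for a regular character $\chi\in\CE(\GF,s)$ whenever $\bG$ has connected centre, which follows from Lusztig's description of regular characters as Alvis--Curtis duals of semisimple characters. Feeding this into the definition of $R_f$ and using $\wt\ve^2=1$ would yield
$$\scal{R_{\Res^{WF}_{W(s)w_1F}(\wt\vp\otimes\wt\ve)}}\chi\GF=\frac{1}{|W(s)|}\sum_{y\in W(s)w_1F}\wt\vp(y)=\scal{\Res^{WF}_{W(s)w_1F}\wt\vp}\Id{W(s)w_1F}.$$
Since $\vp_{\ci_1}$ is the trivial character of $W$, the $\cj=\ci_1$ term evaluates to $1$, producing the extra $q$ in the standard case, and collecting the contributions gives the stated formulas. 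The main obstacle is pinning down the identity $\scal{R^\bG_{\bT_y}(\theta)}\chi\GF=\wt\ve(y)$ in the connected-centre, regular-character setting; once that is in hand, the remainder is routine bookkeeping.
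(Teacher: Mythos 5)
Your proposal is correct and follows essentially the same route as the paper: part (i) combines Proposition \ref{prop:projgamma}, Proposition \ref{prop:wf-ac-pb} and the unitriangularity of $\wt P$ exactly as in the paper, and part (ii) uses the explicit entries of $\wt P$ from the proof of Corollary \ref{ratsubreg}. The ``obstacle'' you identify, namely $\scal{R^\bG_{\bT_y}(\theta)}\chi\GF=\wt\ve(y)$ for $\chi$ regular with connected centre, is precisely the fact the paper invokes in the equivalent form ``the regular character of the series $s$ equals $R_{\tilde\ve}$'', so nothing is missing.
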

When  $\chi$  is  subregular,  in the  notation  of
Lemma \ref{lem:subregchar}, we can write (i) above as
$$a_\ci\sum_E\scal{\Res^{WF}_{W(s)w_1F}(\wt\vp_\ci\otimes\wt\ve)}{\tilde
E}{W(s)w_1F}\scal{R_{\tilde   E}}\chi\GF,$$  where   $E$  runs   over  the
$w_1F$-invariant characters of $W(s)$ which are in the family $c'$.

\begin{proof}
We  first  prove  (i).  If  $\supp\ci$  is  the  \wavefront\  of  $\chi$,  by
Proposition \ref{prop:wf-ac-pb} we know that the $\cj$ in
\eqref{eq:projgamma}  giving rise to  a nonzero scalar  product with $\chi$
must   have   support   smaller   than   $\supp\ci$.   On  the  other  hand
$P^*_{\ci,\cj}$  is $0$ unless the support of $\cj$ is greater than that of
$\ci$; then since the diagonal blocks  of $P^*_{\ci,\cj}$ are the identity,
only the term in the formula survives.

We  now prove  (ii). If  $\ci$ has  subregular support  then (see  proof of
Corollary \ref{ratsubreg})  apart from $\wt P^*_{\ci,\ci}=1$  the only other non-zero
$\wt  P^*_{\ci,\cj}$ occurs when $\ci$ is standard and $\cj$ is the trivial
local  system on the regular class,  in which case $\vp_{\cj}=\Id$ and $\wt
P^*_{\ci,\cj}=q$.  Moreover, since  the centre  of $\bG$  is connected, the
regular character of the series $s$ is equal to $R_{\tilde\ve}$. This gives
the result.
\end{proof}
In case  the  centre  of  $\bG$  is connected, any regular character is
orthogonal to $\CC_\CI(\GF)$ for all non principal $\CI$
since the only local system on the regular
class is the trivial one. This leads to the following result.

\begin{proposition}Assume that the centre of $\bG$ is connected. Let $s$ be
a  semisimple element  of $(\bG^*)^F$  and let  $\chi_s$ denote the regular
character  of  $\CE(\GF,s)$.  Then  for  any  subregular rational unipotent
element $u$ we have
$$\scal{\Gamma_u}{\chi_s}\GF=
q\sum_\cj \overline{\CY_\cj(u)}+
\sum_\ci\overline{\CY_\ci(u)}
\scal{\Res^{WF}_{W(s)w_1F}\wt\vp_\ci}\Id{W(s)w_1F},$$
where the first sum is over the standard pairs with subregular support in
the  principal  block  and  the  second  sum  is  over  all pairs in the
principal block with subregular support.
\end{proposition}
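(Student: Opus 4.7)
The plan is to reduce $\scal{\Gamma_u}{\chi_s}\GF$ to a weighted sum of inner products $\scal{\Gamma_\ci}{\chi_s}\GF$ already determined in Corollary~\ref{ci subreg}(ii). Two ingredients enter: orthogonality of the regular character $\chi_s$ to non-principal blocks, and an inversion of the change of basis between $\{\Gamma_\ci\}$ and $\{\Gamma_v\}$ on pairs whose support is the geometric class of $u$.

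First, since the centre of $\bG$ is connected, the observation in the paragraph immediately preceding the statement tells us that $\chi_s$ is orthogonal to every $\CC_\CI(\GF)$ with $\CI$ non-principal (the only local system on the regular class is the trivial one). Hence only pairs $\ci$ in the principal block contribute upon expanding $\Gamma_u$ in the $\{\Gamma_\ci\}$ basis and pairing with $\chi_s$. Second, I would establish the inverse of the defining relation $\Gamma_\ci = \sum_{a \in A(u)} \CY_\ci(u_a)\Gamma_{u_a}$: grouping the $a$'s by $F$-class rewrites this as $\Gamma_\ci = a_\ci\sum_{(v)\in C^F/\GF} |A(v)^F|^{-1}\CY_\ci(v)\,\Gamma_v$, and the orthogonality relation (4.2) of \cite{DLM3} then inverts it to
$$\Gamma_u = \sum_{\ci:\,\supp\ci=C(u)} a_\ci^{-1}\overline{\CY_\ci(u)}\,\Gamma_\ci,$$
summed over all $F$-stable pairs whose support equals the geometric class $C(u)$ of $u$. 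This is the form of \cite[Lemma 6.3]{DLM3} already invoked in the proofs of Corollaries~\ref{cor:regu} and~\ref{ratsubreg}.

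Combining the two steps, and letting $\CI$ be the principal block,
$$\scal{\Gamma_u}{\chi_s}\GF = \sum_{\ci\in\CI^F,\,\supp\ci = C(u)} a_\ci^{-1}\overline{\CY_\ci(u)}\,\scal{\Gamma_\ci}{\chi_s}\GF.$$
Since $\overline{\CY_\ci(u)}=0$ whenever $\supp\ci\ne C(u)$, the sum may be extended over all pairs of $\CI^F$ with subregular support without affecting its value. Substituting Corollary~\ref{ci subreg}(ii) for each inner product, the factor $a_\ci^{-1}$ cancels the $a_\ci$ appearing there; separating the surviving expression into the $q$-term (present only when $\ci$ is standard) and the $\scal{\Res^{WF}_{W(s)w_1F}\wt\vp_\ci}\Id{W(s)w_1F}$-term (present for every such $\ci$) yields exactly the stated formula.

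I expect the main technical obstacle to be the bookkeeping in the inversion step. One must track carefully whether each sum runs over $a\in A(u)$, over $F$-classes of $A(u)$, or over rational $\GF$-classes in $C^F$, and match the geometric quantity $a_\ci=|A(u)|$ against $|A(v)^F|$ in the orthogonality relation so that $a_\ci^{-1}$ (and not, say, $|A(u)^F|^{-1}$) is the correct coefficient in the expansion of $\Gamma_u$. Once this is in place, the $a_\ci$--$a_\ci^{-1}$ cancellation with Corollary~\ref{ci subreg}(ii) is immediate and the statement drops out.
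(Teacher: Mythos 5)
Your proposal is correct and follows essentially the same route as the paper: expand $\Gamma_u$ in the $\Gamma_\ci$ basis via the orthogonality relation \cite[4.2]{DLM3} (giving coefficient $a_\ci^{-1}\overline{\CY_\ci(u)}=|A(u)|^{-1}\overline{\CY_\ci(u)}$), discard non-principal blocks by the remark on connectedness of the centre, and substitute Corollary~\ref{ci subreg}(ii) so that the $a_\ci$ factors cancel. The paper's proof is just a terser version of the same argument.
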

\begin{proof}
By the orthogonality relations for the $\CY_\ci$ (see for example
\cite[4.2]{DLM3}) we have
\begin{equation}\label{gammau}
\Gamma_u=|A(u)|\inv\sum_\ci\overline{\CY_\ci(u)}\Gamma_\ci,
\end{equation}
whence the result, by Corollary \ref{ci subreg}(ii) and the above remark.
\end{proof}
In  the particular case where there is only one pair in the principal block
with subregular support, the above formula becomes
$$\scal{\Gamma_u}{\chi_s}\GF=q+\scal{\Res^{WF}_{W(s)w_1F}\wt{r}}\Id{W(s)w_1F},$$
where $r$ is the reflection representation of $W$.

\section{Restriction of character sheaves to the unipotent set.}\label{s:resnil-cs}

In \cite{DLM3} the interrelationships among various bases of the space
$\CC_\uni(\GF)$ of unipotently supported functions is discussed, and
Lusztig induction and restriction were described in these terms. In this
section we discuss the restriction of the characteristic function
of an arbitrary Frobenius-stable character sheaf to
the unipotent set of $\GF$, and in particular give a formula for its
inner product with the generalised Gelfand-Graev character $\Gamma_u$.
Such characteristic functions are referred to as \lf s (see \S\ref{ss:cf}).

We  begin by describing  the restriction of  the \lf s $\chi_{E,\phi_E}$ to
the  unipotent set $\GFuni$.  It is known  by \cite[pp. 151]{CV} that
when  the local system $\ci_{\bar\bL}$  as in \S\ref{ss:cs} has unipotent
support,  then  $W_\bG(\bL,\ci_\bL)  =W_\bG(\bL)$,  and  that  the  implied
cocycle is trivial, so that $\CA\simeq\Qlbar W_\bG(\bL,\CS)$ and the \lf s in the
corresponding block can be indexed by characters of $W_\bG(\bL,\CS)$.

Item (ii) of the following theorem
has also been obtained by J.~Taylor (\cite[Theorem 7.7]{Taylor2})
by completely different methods.

\begin{theorem}\label{prop:reslf}
Let $\chi_{E,\phi_E}$ be a \lf\ with associated cuspidal
data $(\bL,\ci_\bL,\CS)$ as in \S\ref{ss:cs} and \S\ref{ss:cf}. Then
\begin{enumerate}
\item  If $\ci_{\bar\bL}$ has support which  is not a unipotent class, then
the restriction $\chi_{E,\phi_E}|_\GFuni$ is zero.
\item  If $\ci_{\bar\bL}$  has unipotent  support, then  $E\in\Irr \CA=\Irr
W_\bG(\bL,\CS)$. In this case, we have
\be\label{resuni}
{\chi_{E,\phi_E}}|_\GFuni=(-1)^{\dim\supp\ci_\bL}Q^\bG
\left(\Ind_{W_\bG(\bL,\CS).w_1F}^{W_\bG(\bL).F}(\ol{\tilde E})\right),
\ee
where $\tilde E$ is a suitable extension of $E$ and $Q^\bG$ is the map from
$\CC(\WGL.F)$ to $\CC_{\CI(E)}(\GF)$ defined by
$Q^\bG(\wt\vp_\ck)=\wt\CX_\ck$  (see \cite[Def.  3.1]{DLM3}). Here $\CI(E)$
is  the  block  corresponding  to  the unipotently supported cuspidal local
system $\ci_{\bar\bL}$.
\end{enumerate}
\end{theorem}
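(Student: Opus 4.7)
\emph{Plan for the proof.} The plan is to treat (i) via a support argument, and (ii) by applying Lusztig's character formula to the induced complex $K=\Ind_\bL^\bG((\ci_\bL\ot\CS)[\dim\Sigma])$, of which $A_E$ is a summand by \eqref{eq:dec-ind}, and then inverting the decomposition on the Weyl-group coset $W_\bG(\bL,\CS)w_1F$ to isolate the $A_E$-isotypic part.

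For (i), since $A_E$ is a direct summand of $K$, its characteristic function vanishes outside the set-theoretic support of $K$, which is contained in the closure of the $\bG$-saturation of $\Sigma=Z^0(\bL)C$. If $C$ is not unipotent in $\bL/Z^0(\bL)$, then every element of $\Sigma$ has semisimple part lying outside $Z^0(\bL)$; since semisimple conjugacy classes in $\bG$ are closed, this persists in the closure of the $\bG$-saturation, so no unipotent element of $\bG$ is in the support and $\chi_{E,\phi_E}|_\GFuni=0$.

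For (ii), the assertion $E\in\Irr W_\bG(\bL,\CS)$ is immediate from the result of \cite{CV} quoted just before the theorem: when $\ci_{\bar\bL}$ has unipotent support, the cocycle defining $\CA$ is trivial and $\CA\simeq\Qlbar W_\bG(\bL,\CS)$. For the displayed formula I would proceed in three steps. First, apply Lusztig's convolution formula for the induced character sheaf: for $u\in\GFuni$, the value $\chi_{K,\phi_0}(u)$ reduces to a sum over cosets $x\bL\in(\bG/\bL)^F$ with $x\inv ux\in\Sigma$; since $\CS$ is a Kummer system on the torus $\bL/[\bL,\bL]$ and $x\inv ux$ is unipotent, the $\CS$-factor in each summand evaluates to $1$, and up to the sign $(-1)^{\dim\supp\ci_\bL}$ coming from the cohomological shifts, the sum becomes a generalised Green function of Lusztig evaluated at $u$. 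Second, identify these generalised Green functions with the normalised characteristic functions $\wt\CX_\ck$ via the map $Q^\bG$ of \cite[Def. 3.1]{DLM3}: the twist of the cuspidal datum by $w\in W_\bG(\bL)$ corresponds, under $Q^\bG$, to the delta class function supported at $wF\in W_\bG(\bL).F$. Third, use \eqref{eq:dec-ind} together with orthogonality of irreducible characters of $W_\bG(\bL,\CS)$ on the coset $W_\bG(\bL,\CS)w_1F$ to invert the relation between $\chi_{K,\phi_0}$ and the family of $\chi_{E',\phi_{E'}}$, isolating $\chi_{E,\phi_E}$ as a weighted sum with weights $\ol{\tilde E(\cdot)}$; by the standard formula for an induced Weyl-coset character, this sum collapses into $Q^\bG$ applied to $\Ind_{W_\bG(\bL,\CS).w_1F}^{W_\bG(\bL).F}(\ol{\tilde E})$.

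The main obstacle will be the careful bookkeeping of signs, shifts, and normalisations: reconciling the cohomological shift $[\dim\Sigma]$ with the normalisations $\wt\CX_\ck=q^{c_\ck}\CX_\ck$, the contribution of $\dim Z^0(\bL)$, and the precise choice of preferred extension $\tilde E$ (see \cite[17.2]{CS}) so that $\phi_E$ is the isomorphism for which the identity holds on the nose rather than up to a scalar. The cleanness of cuspidal character sheaves (\cite{Lu12}) is crucial here, since it guarantees that the IC extension of $\ci_\bL\ot\CS$ coincides with $\ci_\bL\ot\CS$ itself on all of $\Sigma$, eliminating any correction from the boundary of $\Sigma$.
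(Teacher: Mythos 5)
Your proposal is correct and follows essentially the same route as the paper: for (ii) both arguments decompose $\chi_{K,\phi_{0,w}}$ over the coset $W_\bG(\bL,\CS)w_1F$, invert by orthogonality of coset characters, identify $\Res^\GF_\GFuni\chi_{K,\phi_{0,w}}$ with $(-1)^{\dim\supp\ci_\bL}Q^\bG_{ww_1}$ (Lusztig's Theorem 8.5, which is your ``convolution formula'' step), and collapse the resulting sum via Frobenius's formula for induced coset characters. The only divergence is in (i), where the paper simply quotes \cite[Th.~8.5]{CS} while you sketch a direct support argument; your closure step there needs the standard fact that the semisimple parts of elements in the closure of a $\bG$-stable set are controlled by the Steinberg map, not merely that individual semisimple classes are closed.
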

\begin{proof}
The first part is in \cite[Th. 8.5]{CS}.
We therefore assume that $\ci_{\bar\bL}$ has unipotent support,
so that $\CA=\Qlbar W_\bG(\bL,\CS)$. Then for any element $w\in W_\bG(\bL,\CS)$,
we have an isomorphism $\phi_{0,w}^*:F^*(ww_1)^*K\to K$, and, for each
$E$ such that $F^*\dot w_1^*E\simeq E$, an associated
characteristic function $\chi_{E,\phi_E,w}$. Moreover those $E\in\Irr W_\bG(\bL,\CS)$
which are fixed by $F^*\dot w_1^*$ are precisely the same as those fixed by
$F^*(\dot w\dot w_1)^*$. Following Lusztig \cite[\S10.4]{CS}, for the characteristic
functions, for each $w\in W_\bG(\bL,\CS)$, we have
$$
\chi_{K,\phi_{0,w}}=
\sum_{E:\dot w_1^*F^*E= E}\tilde E(ww_1F)\chi_{E,\phi_E},
$$
where $\tilde E$ is a suitable extension of $E$ to $W_\bG(\bL,\CS).w_1F$.
Inverting this relation using the orthogonality of characters of cosets,
we obtain as in \cite[formula (10.4.5)]{CS}
$$
\chi_{E,\phi_E}=|W_\bG(\bL,\CS)|\inv
\sum_{w\in W_\bG(\bL,\CS)}\ol{\tilde E(ww_1F)}\chi_{K,\phi_{0,w}}.
$$
But again by \cite[Th. 8.5]{CS} (or \cite[Prop. 3.2]{DLM3}) the restriction
of $\chi_{K,\phi_{0,w}}$ to the unipotent set is
$(-1)^{\dim\supp\ci_\bL}Q^\bG_{ww_1}$  (the sign  comes from  the fact that
the  perverse sheaf is shifted from  the intersection cohomology complex by
$\dim\supp\ci_\bL$),    and   by   \cite[Def.   3.1(iii)]{DLM3}   we   have
$Q^\bG_{ww_1}=Q^\bG(\gamma_{ww_1})$,    where   $\gamma_v$    denotes   the
normalised  characteristic function  of the  class of  $vF$ in $\WGL.F$. It
follows that
$$
\Res^\GF_\GFuni(\chi_{E,\phi_E})=(-1)^{\dim\supp\ci_\bL}
|W_\bG(\bL,\CS)|\inv
\sum_{v\in W_\bG(\bL,\CS).ww_1F}\ol{\tilde E(v)}Q^\bG(\gamma_{ww_1}).
$$
But by Frobenius' formula for induced characters,
$$
|W_\bG(\bL,\CS)|\inv\sum_{v\in W_\bG(\bL,\CS).ww_1F}
\ol{\tilde E(v)}\gamma_{ww_1}=\Ind_{W_\bG(\bL,\CS).w_1F}^{\WGL.F}(\ol{\tilde{E}}),
$$
and the result follows.
\end{proof}

\begin{remark}\label{rem:cstoblock}  Theorem  \ref{prop:reslf}  shows  that
given  a \lf\ $\chi_{E,\phi_E}$  with non-zero restriction  to the unipotent
set,  its  restriction  lies  in  $\CC_{\CI(E)}$  for  a well defined block
$\CI(E)$,  which corresponds  to the  cuspidal local system $\ci_{\bar\bL}(E)$,
which has unipotent support.
\end{remark}

\subsection{Wave  front  set  and  character  sheaves}  In  order to better
understand the restriction to the unipotent set of a \lf, we shall need the
following  information concerning  the {\lf}s  which have non-trivial inner
product with a given irreducible character.

\begin{definition}\label{def:n_E} With notation as in Theorem \ref{prop:reslf},
we define $n_\ck(E)$ by
$$\Ind_{W_\bG(\bL,\CS).w_1F}^{\WGL.F}(\overline{\tilde E})=
(-1)^{\dim\supp\ci_\bL}\sum_{\ck\in\CI(E)}n_\ck(E) \tilde\vp_\ck.
$$
\end{definition}
Note that $n_\ck(E)$ depends on the extension $\tilde E$ which itself depends on
the chosen isomorphism $\phi_E$.

\begin{remark}\label{rem:nk}
\begin{enumerate}
\item  For any $F$-stable character sheaf $A_E$, the scalars $n_\ck(E)$ are
defined   by   \ref{def:n_E},   which   implicitly   involves   the  triple
$(\bL(E),\ci_\bL(E),\CS(E))$  as described in Remark \ref{rem:cs-cusp}, and
assumes  that $\ci_{\bar\bL}(E)$ has unipotent support, as is the case when
the  restriction of  $\chi_{E,\phi_E}$ to  the unipotent  set of $\GF$ is
non-zero  by  Theorem  \ref{prop:reslf}(i).  Whenever  we  use the notation
$n_\ck(E)$  we  shall  take  this  as  understood.  \item  The  $n_\ck$ are
algebraic integers. They will figure prominently in the rest of this work.
\end{enumerate}
\end{remark}

The following key result generalises Proposition \ref{prop:wf-ac-pb}, which
deals only with the principal block. Recall that in \cite[3.6]{DLM3} we defined
an involution $\ci\mapsto\hat\ci$ on a block $\CI$ and a sign $\ve_\ci$ on $\CI$
by $\wt\vp_\ci\otimes\wt\ve=\ve_\ci\wt\vp_{\hat\ci}$ where $\wt\ve$ is defined
on $W_\bG(\bL)F$ by  $\wt\ve(wF)=\ve(w)$. We have $\ve_\ci=1$ when
$\bG$ is split (that is, $F$ acts trivially on $W$).
\begin{lemma}\label{lem:wf-cs}
Let  $A_E$  be  a  character  sheaf  in  $\cfs_{\CL,c}$,  and  suppose that
$\chi_{E,\phi_E}$ has non-zero restriction to the unipotent set. Let $C$ be
the \wavefront\ of $(\CL,c)$ and $C'$ its unipotent support. If $n_\ck(E)\neq 0$, then
\begin{enumerate}
\item either the support of
$\ck$ is $C'$, or it has dimension smaller than $\dim(C')$.
\item either the support of
$\hat\ck$ is $C$, or it has dimension smaller than $\dim(C)$.
\end{enumerate}
\end{lemma}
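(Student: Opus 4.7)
The plan is to read off an explicit expansion of $\chi_{E,\phi_E}|_{\GFuni}$ in the two canonical bases $(\wt\CX_\ck)$ and $(\wt\CY_\cj)$ of $\CC_{\CI(E)}(\GF)$, then to obtain~(i) by combining the triangularity of $\wt P$ with Proposition~\ref{family support}, and~(ii) by applying the same reasoning after tensoring with $\wt\ve$, which brings the family $(\CL,c\otimes\ve)$ into play.

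Combining Theorem~\ref{prop:reslf}(ii), Definition~\ref{def:n_E} and $Q^\bG(\wt\vp_\ck)=\wt\CX_\ck$, and then substituting $\wt\CX_\ck=\sum_{\cj\leq\ck}\wt P_{\cj,\ck}\wt\CY_\cj$, produces
\[
\chi_{E,\phi_E}|_{\GFuni}=\sum_{\ck\in\CI(E)}n_\ck(E)\,\wt\CX_\ck
=\sum_{\cj\in\CI(E)}\Bigl(\sum_{\ck\geq\cj}n_\ck(E)\,\wt P_{\cj,\ck}\Bigr)\wt\CY_\cj.
\]
Each $\wt\CY_\cj$ is supported on $C_\cj^F$, and the $\wt\CY_\cj$ sharing a common support $C$ are linearly independent on $C^F$ by the orthogonality relation \cite[(4.2)]{DLM3}. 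Proposition~\ref{family support}, applied to the family of $A_E$, forces $\chi_{E,\phi_E}$ to vanish on every rational unipotent class of dimension strictly larger than $\dim C'$ and on every class of dimension $\dim C'$ other than $C'$ itself. For~(i), if the set $S:=\{\ck:n_\ck(E)\neq0,\ \dim C_\ck\geq\dim C',\ C_\ck\neq C'\}$ were non-empty, I would pick $\ck\in S$ with $\dim C_\ck$ maximal. Any $\ck'\geq\ck$ with $C_{\ck'}=C_\ck$ contributes $\wt P_{\ck,\ck'}=\delta_{\ck,\ck'}$ by the identity-block property on pairs sharing a support, while any $\ck'\geq\ck$ with $\dim C_{\ck'}>\dim C_\ck$ satisfies $\dim C_{\ck'}>\dim C'$ and $C_{\ck'}\neq C'$, so would belong to $S$ unless $n_{\ck'}(E)=0$, contradicting maximality. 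The $\wt\CY_\ck$-coefficient therefore reduces to $n_\ck(E)\neq0$, contradicting its forced vanishing.

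For~(ii), I would symmetrise by tensoring the defining identity of $n_\ck(E)$ with the extended sign character $\wt\ve$. The identities $\wt\vp_\ck\otimes\wt\ve=\ve_\ck\wt\vp_{\hat\ck}$ (from \cite[3.6]{DLM3}) and $\Ind(X)\otimes\wt\ve=\Ind\bigl(X\otimes\wt\ve|_{W_\bG(\bL,\CS).w_1F}\bigr)$ transform it into
\[
\Ind\bigl(\ol{\tilde E}\otimes\wt\ve|_{W_\bG(\bL,\CS).w_1F}\bigr)
=(-1)^{\dim\supp\ci_\bL}\sum_{\ck\in\CI(E)}\ve_\ck n_\ck(E)\,\wt\vp_{\hat\ck}.
\]
I expect this to be the defining identity for the $n$-coefficients of a Lusztig function attached to a character sheaf $A_{E'}\in\cfs_{\CL,c\otimes\ve}$. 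Since the unipotent support of $(\CL,c\otimes\ve)$ equals the wavefront $C$ of $(\CL,c)$ by Proposition~\ref{usupp}(iii), applying~(i) to $A_{E'}$ then yields the bound on $\supp\hat\ck$ asserted in~(ii).

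The main obstacle will be to justify that $\ol{\tilde E}\otimes\wt\ve|_{W_\bG(\bL,\CS).w_1F}$ really is the appropriate extension $\widetilde{E'}$ of an irreducible character $E'$ of $W_\bG(\bL,\CS)$ for a character sheaf $A_{E'}\in\cfs_{\CL,c\otimes\ve}$, with signs correctly tracked by the $\ve_\ck$. This is essentially a compatibility statement between Alvis-Curtis duality on $\CC(\GF)$, the involution $\ck\leftrightarrow\hat\ck$ on $\CI(E)$, and the operation $c\mapsto c\otimes\ve$ on Lusztig families. An alternative route avoiding this identification is to invoke Lusztig's inequality \cite[Cor.~10.9]{US}, generalised to the block $\CI(E)$, directly at the level of Weyl group characters: the hypothesis $n_\ck(E)\neq0$ places $\tilde E$ as a constituent of $\Res(\wt\vp_\ck)$, hence $\vp_{\hat\ck}$ in a family matching $c\otimes\ve$, so $\beta_{C_{\hat\ck}}\geq a(c\otimes\ve)=\beta_C$, with equality forcing $C_{\hat\ck}=C$ by the second part of the same corollary.
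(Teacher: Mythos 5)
Your argument for part (i) is correct and is essentially the paper's: expand $\Res^\GF_\GFuni\chi_{E,\phi_E}=\sum_{\ck}n_\ck(E)\wt\CX_\ck$ in the $\wt\CY$-basis, use the unitriangularity of $\wt P$ together with the identity diagonal blocks, and compare with the vanishing forced by Proposition~\ref{family support}; your ``maximal element of $S$'' bookkeeping just spells out what the paper declares immediate.

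Part (ii) is where the proposal has a genuine gap, and it is exactly the step you flag as ``the main obstacle''. You need to know that $\ol{\tilde E}\otimes\wt\ve|_{W_\bG(\bL,\CS).w_1F}$ is the appropriately normalised extension of an irreducible character $E'$ of $W_\bG(\bL,\CS)$ such that $A_{E'}$ lies in the family $(\CL,c\otimes\ve)$. Nothing available at this point of the paper provides the compatibility between the operation $E\mapsto E\otimes\ve$ on $\Irr(W_\bG(\bL,\CS))$ for a general (possibly non-principal) block and the operation $c\mapsto c\otimes\ve$ on families of $W_\bG(\CL)$; the closest statement, Corollary~\ref{D(chi_E)}, is proved only for unipotent character sheaves and is itself a downstream consequence of this lemma, so invoking anything of that kind here is circular. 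Your fallback --- ``Lusztig's inequality \cite[Cor.~10.9]{US} generalised to the block $\CI(E)$'' --- is likewise unsubstantiated: that corollary concerns the ordinary Springer correspondence and the $a$-function on $\Irr(W)$, which is why Proposition~\ref{prop:wf-ac-pb} is stated only for the principal block; no generalisation to arbitrary blocks is established. The paper's proof of (ii) sidesteps all of this by staying at the level of class functions on $\GF$: apply the Alvis-Curtis duality $D$ to $\Res^\GF_\GFuni\chi_{E,\phi_E}=\sum_{\ck}n_\ck(E)\wt\CX_\ck$ and use $D(\wt\CX_\ck)=\eta_\bL\ve_\ck\wt\CX_{\hat\ck}$ from \cite[3.14(ii)]{DLM3}, so that the coefficient of $\wt\CX_{\hat\ck}$ in $\Res^\GF_\GFuni D(\chi_{E,\phi_E})$ is $\eta_\bL\ve_\ck n_\ck(E)$; then note that $D(\chi_{E,\phi_E})$ is a combination of $D(\rho)$ for $\rho$ in the family $(\CL,c)$, all of which have unipotent support $C$ by Proposition~\ref{usupp}(ii) because every such $\rho$ has \wavefront\ $C$. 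The triangularity argument of part (i) applied to this dual expression then gives the bound on $\supp\hat\ck$. You should replace your sign-twisting argument by this duality argument.
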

\begin{proof}
By Theorem \ref{prop:reslf} and Definition \ref{def:n_E} we have
\be\label{eq:uni-res-cs}
\Res^\GF_\GFuni\chi_{E,\phi_E}=\sum_{\ck\in\CI(E)}
n_\ck(E)\wt\CX_\ck.
\ee
The assertion (i) is now immediate from the definition of the support, since the transition matrix between the bases $\CX_\ci$ and $\CY_\ci$ is unitriangular.

We now apply the duality functor $D$ to \eqref{eq:uni-res-cs}, bearing in mind that duality commutes with restriction to the unipotent set and \cite[3.14 (ii)]{DLM3}, obtaining
\begin{equation}\label{res_dual}
\Res^\GF_\GFuni D(\chi_{E,\phi_E})=\sum_{\ck\in\CI(E)}
n_\ck(E)\eta_{\bL(E)}\ve_\ck\wt\CX_{\hat\ck}.
\end{equation}

Now $\chi_{E,\phi_E}$ is a linear combination of characters in the same family:
$$\chi_{E,\phi_E}=\sum_{\rho\in\Irr(\GF)_{\CL,c}}m_\rho\rho,$$
whence
$$D(\chi_{E,\phi_E})=\sum_{\rho\in\Irr(\GF)_{\CL,c}}m_\rho D(\rho),$$
where all the $D(\rho)$ have unipotent support $C$, since all the $\rho$ have \wavefront\
$C$. Again using the fact that the transition matrix between the bases $\CX_\ci$ and
$\CY_\ci$ is triangular, this means that the restriction to the unipotent set
of $D(\chi_{E,\phi_E})$ is a linear combination of $\CX_\ci$ with
the support of $\ci$ either equal to $C$ or having smaller dimension. Comparing
with \ref{res_dual}, we obtain (ii).
\end{proof}



\subsection{Inner product with Generalised Gelfand-Graev characters}\label{ss:cs-ggg}
We  begin with  the following  general expression  for the Mellin transform
$\Gamma_\ci$  in  terms  of  \lf  s.  Note  that  since  the  \lf s form an
orthonormal  basis of $\CC(\GF)$, it suffices for this purpose to compute
the  inner product of $\Gamma_\ci$ with an arbitrary \lf\ $\chi_{E,\phi_E}$.
Moreover  such an inner product is evidently zero unless the restriction of
$\chi_{E,\phi_E}$  to  $\GFuni$  is  non-zero,  in  which  case $A_E$
determines a unipotently supported cuspidal local system
$\ci_{\bar\bL}(E)$,  a corresponding  block $\CI(E)$  and a unipotent class
$\wf(E)$, viz. its \wavefront\ (see Definition \ref{wf(E)}).

The following result is related to Proposition \ref{prop:projgamma}.

\begin{theorem}\label{thm:gamma-cs}
Let  $\ci\in\CI^F$, a block with cuspidal datum $(\bL,\ci_\bL)$ and suppose
$A_E\in   \hat   G^F$.   In terms of the  integers   $n_\ck(E)$   of  Definition
\ref{def:n_E}, we have
\be\label{eq:gammai-cs}
\scal{\wt\Gamma_\ci}{\chi_{E,\phi_E}}\GF=
\begin{cases}
0 \text{ if }\Res^\GF_\GFuni(\chi_{E,\phi_E})=0
\text{ or }\CI(E)\neq\CI\\
\sum_{\overset{\ck\in\CI^F,\;\ck\geq\ci\text{ and}}{\overset{\dim(\supp(\ck))
<\dim(\wf(E))}{\text{ or }\supp(\ck)=\wf(E)}}}n_{\hat\ck}(E)\ve_\ck
\wt P_{\ci\ck}^*\text { otherwise.}\\
\end{cases}
\ee
Similarly
\be\label{eq:Dgammai-cs}
\scal{D(\wt\Gamma_\ci)}{\chi_{E,\phi_E}}\GF=
\begin{cases}
0 \text{ if }\Res^\GF_\GFuni(\chi_{E,\phi_E})=0
\text{ or }\CI(E)\neq\CI\\
\eta_\bL\sum_{\overset{\ck\in\CI^F,\;\ck\geq\ci\text{ and}}
{\overset{\dim(\supp(\ck))<\dim(\supp(E))}{\text{ or }\supp(\ck)=\supp(E)}}}
n_\ck(E)\wt P_{\ci\ck}^*\text { otherwise.}\\
\end{cases}
\ee
\end{theorem}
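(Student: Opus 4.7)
The plan is to reduce both inner products to the single evaluation $\scal{D\wt\Gamma_\ci}{\wt\CX_\ck}\GF$ and then to substitute the expansion of $\chi_{E,\phi_E}|_{\GFuni}$ in the basis $(\wt\CX_\ck)$ furnished by Theorem \ref{prop:reslf} and Definition \ref{def:n_E}.

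For the vanishing claims, observe that $\wt\Gamma_\ci\in\CC_\CI(\GF)$ is unipotently supported, so $\scal{\wt\Gamma_\ci}{\chi_{E,\phi_E}}\GF$ depends only on $\chi_{E,\phi_E}|_{\GFuni}$ and vanishes when this restriction does. Otherwise, by Theorem \ref{prop:reslf} and Remark \ref{rem:cstoblock} the restriction lies in $\CC_{\CI(E)}(\GF)$, so the orthogonality of different block subspaces forces the inner product to vanish whenever $\CI(E)\ne\CI$. The same reasoning applies to the $D$-version, since $\CC_\CI(\GF)$ is stable under $D$ and, by Scholium \ref{sch:supdg}, $D\wt\Gamma_\ci$ is still unipotently supported.

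For the main identity, I would extract from the proof of Theorem \ref{chi_u} (which invokes \cite[proof of 6.2]{DLM3} together with the unitriangularity relating the $\wt\CY_\ci$ and $\wt\CX_\ci$) the evaluation
\[
\scal{D\Gamma_\ci}{\wt\CX_\ck}\GF=\eta_{\bL_\CI}\,a_\ci\,\zeta_\CI^{-1}\,\wt P^*_{\ci,\ck},
\]
which, after normalising $\wt\Gamma_\ci=a_\ci^{-1}\zeta_\CI\Gamma_\ci$, becomes
\[
\scal{D\wt\Gamma_\ci}{\wt\CX_\ck}\GF=\eta_{\bL_\CI}\,\wt P^*_{\ci,\ck}.
\]
Substituting $\chi_{E,\phi_E}|_{\GFuni}=\sum_\ck n_\ck(E)\wt\CX_\ck$ then yields $\scal{D\wt\Gamma_\ci}{\chi_{E,\phi_E}}\GF=\eta_\bL\sum_\ck n_\ck(E)\wt P^*_{\ci,\ck}$; triangularity of $\wt P$ eliminates the terms with $\ck\not\ge\ci$, and Lemma \ref{lem:wf-cs}(i) forces $n_\ck(E)=0$ unless $\supp(\ck)=\supp(E)$ or $\dim\supp(\ck)<\dim\supp(E)$, giving \eqref{eq:Dgammai-cs}.

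For \eqref{eq:gammai-cs} I would exploit that Alvis-Curtis duality is an isometric involution on $\CC(\GF)$, writing $\scal{\wt\Gamma_\ci}{\chi_{E,\phi_E}}\GF=\scal{D\wt\Gamma_\ci}{D\chi_{E,\phi_E}}\GF$. The restriction $D\chi_{E,\phi_E}|_{\GFuni}=\sum_\ck n_\ck(E)\eta_{\bL_\CI}\ve_\ck\wt\CX_{\hat\ck}$ is the content of equation \eqref{res_dual} in the proof of Lemma \ref{lem:wf-cs}; substituting and using the basic inner product above gives $\sum_\ck n_\ck(E)\ve_\ck\wt P^*_{\ci,\hat\ck}$. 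Reindexing $\ck\mapsto\hat\ck$ together with $\ve_{\hat\ck}=\ve_\ck$ (an immediate consequence of $\wt\vp_\ci\otimes\wt\ve=\ve_\ci\wt\vp_{\hat\ci}$ and $\hat{\hat\ck}=\ck$) yields the form in \eqref{eq:gammai-cs}, with the support restriction now coming from Lemma \ref{lem:wf-cs}(ii) transported through the involution. The principal obstacle throughout will be bookkeeping: tracking the normalisations ($a_\ci,\zeta_\CI,\eta_\bL,\ve_\ck$ and the tildes), the involution $\hat{\;}$, and correctly matching the two distinct support conditions of Lemma \ref{lem:wf-cs} to the two final formulas.
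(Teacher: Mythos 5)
Your proposal is correct and follows essentially the same route as the paper: both rest on the expansion $\Res^\GF_\GFuni\chi_{E,\phi_E}=\sum_\ck n_\ck(E)\wt\CX_\ck$, the inner-product formula from the proof of \cite[6.2]{DLM3} (your $\scal{D\wt\Gamma_\ci}{\wt\CX_\ck}\GF=\eta_\bL\wt P^*_{\ci,\ck}$ is the same identity the paper quotes in the form $\scal{\wt\Gamma_\ci}{\wt\CX_{\hat\ck}}\GF=\ve_\ck\wt P^*_{\ci,\ck}$, via $D(\wt\CX_\ck)=\eta_\bL\ve_\ck\wt\CX_{\hat\ck}$), triangularity of $\wt P$, and the two parts of Lemma \ref{lem:wf-cs} matched to the two formulas exactly as in the paper. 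The only cosmetic difference is that you establish \eqref{eq:Dgammai-cs} first and deduce \eqref{eq:gammai-cs} by the isometry of $D$, whereas the paper does the reverse; your reindexing $\ck\mapsto\hat\ck$ with $\ve_{\hat\ck}=\ve_\ck$ is sound.
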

\begin{proof}
Using equation \eqref{eq:uni-res-cs} we get
$$
\scal{\wt\Gamma_\ci}{\chi_{E,\phi_E}}\GF
=\scal{\wt\Gamma_\ci}{\Res^\GF_\GFuni(\chi_{E,\phi_E})}\GF\\
=\sum_{\ck\in\CI(E)}n_\ck(E)\scal{\wt\Gamma_\ci}{\wt\CX_\ck}\GF.
$$
In the proof of \cite[6.2]{DLM3} one finds the equation
$\eta_\bL\ve_\ck\scal{\Gamma_\ci}{\tilde\CX_{\hat\ck}}\GF=
\eta_\bL a_\ci\zeta_\CI\inv\tilde P^*_{\ci,\ck}$
which by the definition of $\wt\Gamma_\ci$ can be written
$\scal{\wt\Gamma_\ci}{\tilde\CX_{\hat\ck}}\GF=\ve_\ck\tilde P^*_{\ci,\ck}$.

Changing  the variable  from $\ck$  to $\hat\ck$  in the equation above and
substituting,  we obtain \ref{eq:gammai-cs}:  the condition on $\supp(\ck)$
comes  from Lemma  \ref{lem:wf-cs}(ii) and  the condition $\ck\geq\ci$ from
the fact that $P_{\ci,\cj}$ is upper triangular.

For formula \ref{eq:Dgammai-cs}, since duality commutes with restriction to
the  unipotent  set,  the  function  $D(\wt\Gamma_\ci)$ is also unipotently
supported  so we  have the  same equation  with $\wt\Gamma_\ci$ replaced by
$D(\wt\Gamma_\ci)$.  We  then  use  \cite[3.14(ii)]{DLM3} which states that
$D(\tilde\CX_\ck)=\eta_\bL\ve_\ck\tilde\CX_{\hat\ck}$  and  proceed  in the
same way, using \ref{lem:wf-cs}(i) instead of \ref{lem:wf-cs}(ii).
\end{proof}

\begin{corollary}\label{cor:wfsupp}
With notation as in Theorem \ref{thm:gamma-cs}, if
$\dim(\supp(\ci))\geq\dim(\wf(E))$ then
\be\label{eq:wfsupp}
\scal{\wt\Gamma_\ci}{\chi_{E,\phi_E}}\GF=
\begin{cases}
n_{\hat\ci}(E)\ve_\ci\text{ if }\supp(\ci)= \wf(E)\text{ and }\CI(E)=\CI,\\
0 \text { otherwise.}\\
\end{cases}
\ee
and if $\dim(\supp(\ci))\geq\dim(\supp(E))$ then
\be\label{eq:Dwfsupp}
\scal{D(\wt\Gamma_\ci)}{\chi_{E,\phi_E}}\GF=
\begin{cases}
\eta_\bL n_\ci(E)\text{ if }\supp(\ci)= \supp(E)\text{ and }\CI(E)=\CI,\\
0 \text { otherwise.}\\
\end{cases}
\ee
\end{corollary}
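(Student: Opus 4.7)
The plan is to derive this as a direct specialisation of Theorem \ref{thm:gamma-cs}, whose conclusions already identify the candidate summands; the hypothesis $\dim(\supp(\ci)) \geq \dim(\wf(E))$ will collapse those sums to at most one term.

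First I would start from formula \eqref{eq:gammai-cs} and analyse which indices $\ck$ can contribute. The summation condition $\ck\geq\ci$ unfolds (by the definition of the partial order on pairs) to $C_\ci\subseteq\overline{C_\ck}$, which forces $\dim(\supp(\ck))\geq\dim(\supp(\ci))$. Combined with the standing hypothesis $\dim(\supp(\ci))\geq\dim(\wf(E))$, this rules out the alternative $\dim(\supp(\ck))<\dim(\wf(E))$ appearing in Theorem \ref{thm:gamma-cs}. Hence only indices with $\supp(\ck)=\wf(E)$ survive, and combining this equality with the chain $\dim(\supp(\ci))\geq\dim(\wf(E))=\dim(\supp(\ck))$ and $C_\ci\subseteq\overline{C_\ck}$ forces $\supp(\ci)=\supp(\ck)=\wf(E)$. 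Consequently, if $\supp(\ci)\neq\wf(E)$ the sum is empty and vanishes, giving the second branch of \eqref{eq:wfsupp}.

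In the remaining case, both $\ci$ and $\ck$ lie in the same block $\CI=\CI(E)$ and share the common support $\wf(E)$. Here I would invoke the fact (already used in the proof of Corollary \ref{cor:regu}) that the diagonal blocks of the unitriangular matrix $\wt P$ indexed by pairs of a given support are identity submatrices; consequently $\wt P^*_{\ci,\ck}=\delta_{\ci,\ck}$, and the sum reduces to the single term $n_{\hat\ci}(E)\ve_\ci$, as asserted. The compatibility condition $\CI(E)=\CI$ is inherited directly from Theorem \ref{thm:gamma-cs}.

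For the dual formula \eqref{eq:Dwfsupp}, the argument is completely parallel, starting instead from \eqref{eq:Dgammai-cs}: the hypothesis $\dim(\supp(\ci))\geq\dim(\supp(E))$ together with $\ck\geq\ci$ kills the alternative $\dim(\supp(\ck))<\dim(\supp(E))$ and forces $\supp(\ci)=\supp(\ck)=\supp(E)$, after which the identity-block property of $\wt P$ again collapses the sum to the single term $\eta_\bL n_\ci(E)$. There is no substantive obstacle here: the only thing to check carefully is the direction of the order relation $\ck\geq\ci$ (so that one really gets a lower bound on $\dim(\supp(\ck))$) and the fact that $\wf(E)$ and $\supp(E)$ appearing in the two branches are the \emph{correct} common support after the reduction; both follow from the definitions recalled in \S\ref{ss:wff} and Definition \ref{wf(E)}.
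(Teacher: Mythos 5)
Your proposal is correct and follows essentially the same route as the paper: both specialise Theorem \ref{thm:gamma-cs} and observe that the hypothesis on dimensions, together with $\ck\geq\ci$, forces $\supp(\ck)=\supp(\ci)$ and collapses the sum to the single term $\ck=\ci$. Your explicit appeal to the identity diagonal blocks of $\wt P$ to discard other pairs $\ck$ with the same support is a welcome extra precision that the paper leaves implicit.
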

\begin{proof}
The condition on the first case of equation \ref{eq:wfsupp}
(resp.\ equation \ref{eq:Dwfsupp})
is such that  there is only one summand
satisfying the conditions in the sum on the right side of
\eqref{eq:gammai-cs}  (resp.\ of \eqref{eq:Dgammai-cs}),  and it corresponds
to  $\ck=\ci$.  In  the  second  case,  there  is no $\ck$ satisfying those
conditions.
\end{proof}

We next prove the ``Mellin transform'' of  the statement in Theorem \ref{thm:gamma-cs}.

\begin{corollary}\label{thm:cs-ggg} Let $A_E$ be a character sheaf such that
$\Res^\GF_\GFuni\chi_{E,\phi_E}\neq 0$. Let $C$ be a unipotent
class in $\bG$ and $u\in C^F$.
Then, again in terms of the integers $n_\ck(E)$ of
Definition \ref{def:n_E}, we have
\be\label{eq:cs-ggg}
\scal{\Gamma_u}{\chi_{E,\phi_E}}\GF=
\zeta_{\CI(E)}\inv\sum_{\{\ci\in\CI(E)\mid\supp(\ci)=C\}}\ol{\CY_\ci}(u)
\sum_{\overset{\ck\in\CI(E)^F,\;\ck\geq\ci\text{ and}}
{\overset{\dim(\supp(\ck))<\dim(\wf(E))}{\text{ or }\supp(\ck)=\wf(E)}}}
n_{\hat\ck}(E)\ve_\ck\wt P_{\ci\ck}^*.
\ee
\end{corollary}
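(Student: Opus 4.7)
The plan is to derive this from Theorem \ref{thm:gamma-cs} by inverting the Mellin transform that defines $\Gamma_\ci$ in terms of the $\Gamma_{u_a}$. Since the formula involves $\Gamma_u$ (a single generalised Gelfand-Graev character) while Theorem \ref{thm:gamma-cs} gives inner products involving $\wt\Gamma_\ci$ (a Mellin transform), the first step is to express $\Gamma_u$ as a linear combination of the $\Gamma_\ci$ (or their normalised versions $\wt\Gamma_\ci$), then take inner products termwise.

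Concretely, I would use the orthogonality relations \cite[(4.2)]{DLM3} for the $\CY_\ci$, which gives the Mellin inversion formula
\begin{equation*}
\Gamma_u = |A(u)|^{-1}\sum_{\{\ci : \supp(\ci)=C\}} \overline{\CY_\ci(u)}\,\Gamma_\ci,
\end{equation*}
exactly as recorded in equation \eqref{gammau}: only pairs with support equal to $C$ contribute, since $\CY_\ci(u)=0$ otherwise. Taking the inner product with $\chi_{E,\phi_E}$ and using linearity in the first slot yields
\begin{equation*}
\scal{\Gamma_u}{\chi_{E,\phi_E}}\GF = |A(u)|^{-1}\sum_{\{\ci : \supp(\ci)=C\}} \overline{\CY_\ci(u)}\,\scal{\Gamma_\ci}{\chi_{E,\phi_E}}\GF.
\end{equation*}

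Now I would convert to the normalised version using $\Gamma_\ci = a_\ci \zeta_\CI^{-1}\wt\Gamma_\ci$. For pairs $\ci$ with $\supp(\ci)=C$ and $u\in C$, we have $a_\ci = |A(u)|$, so the factors $|A(u)|^{-1}a_\ci$ cancel. The inner product on the right-hand side then becomes $\zeta_\CI^{-1}\scal{\wt\Gamma_\ci}{\chi_{E,\phi_E}}\GF$, and Theorem \ref{thm:gamma-cs} tells us this vanishes unless $\CI = \CI(E)$, in which case it equals the stated sum over $\ck\in\CI(E)^F$ with $\ck\geq\ci$ and the support condition relating $\supp(\ck)$ to $\wf(E)$.

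Substituting this expression back produces exactly the formula \eqref{eq:cs-ggg}: the outer sum ranges over pairs $\ci\in\CI(E)$ with $\supp(\ci)=C$, the global factor $\zeta_{\CI(E)}^{-1}$ factors out, and the inner sum is the one given by Theorem \ref{thm:gamma-cs}. There is no real obstacle here beyond careful bookkeeping of normalisations: the content of the statement is the inversion of the relation defining $\Gamma_\ci$ combined with the already-established formula in Theorem \ref{thm:gamma-cs}. The one point to verify with some care is the cancellation $|A(u)|^{-1} a_\ci = 1$, which relies on the fact that all pairs appearing in the outer sum have the same support $C$ as $u$.
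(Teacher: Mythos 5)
Your proposal is correct and follows essentially the same route as the paper: both invert the Mellin transform via \eqref{gammau}, rewrite $\Gamma_\ci=a_\ci\zeta_\CI\inv\wt\Gamma_\ci$ with $a_\ci=|A(u)|$ cancelling the prefactor, and then apply Theorem \ref{thm:gamma-cs} termwise. The bookkeeping of normalisations, including the emergence of the global $\zeta_{\CI(E)}\inv$, matches the paper's argument exactly.
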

\begin{proof}
Using \ref{gammau} and the definition
$\Gamma_\ci=|A(u)|\zeta_\CI\inv\wt\Gamma_\ci$, we have
$$
\Gamma_u=\sum_{\ci\in\CP^F}\zeta_{\CI(\ci)}\inv\ol{\CY_\ci(u)}\wt\Gamma_\ci,
$$
where $\CI(\ci)$ is the block to which $\ci$ belongs.
The corollary now follows immediately from Theorem \ref{thm:gamma-cs}.
\end{proof}


The next corollary gives the multiplicity of a \lf\ in a \ggg\ corresponding to
a unipotent element of its \wavefront.

\begin{corollary}\label{cor:cs-ggg-wf} Maintain the notation of Corollary
\ref{thm:cs-ggg} and assume that $u\in \wf(E)^F$. Then
$$
\scal{\Gamma_u}{\chi_{E,\phi_E}}\GF=
\zeta_\CI\inv\sum_{\{\ci\in\CI(E)\mid\supp(\ci)=\wf(E)\}}n_{\hat\ci}(E)
\ve_{\ci}\ol{\CY}_{\ci}(u),
$$
\end{corollary}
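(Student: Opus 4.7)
The plan is to deduce this statement directly from Corollary \ref{thm:cs-ggg} by specialising to $C=\wf(E)$ and observing that the inner sum collapses to a single term. With this specialisation, the outer sum runs over pairs $\ci\in\CI(E)$ with $\supp(\ci)=\wf(E)$, and for each such $\ci$ the inner sum is over $\ck\in\CI(E)^F$ with $\ck\geq\ci$ and either $\dim(\supp(\ck))<\dim(\wf(E))$ or $\supp(\ck)=\wf(E)$.

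The key observation is that the partial order $\ck\geq\ci$ means $C_\ci\subseteq\overline{C_\ck}$, so $\dim(\supp(\ck))\geq\dim(\supp(\ci))=\dim(\wf(E))$. Hence the first alternative in the inner sum is impossible, and one must have $\supp(\ck)=\supp(\ci)=\wf(E)$. But, as was used in the proof of Corollary \ref{cor:regu}, the diagonal blocks of $\wt P$ indexed by pairs with a fixed support are identity submatrices, so in this regime $\wt P^*_{\ci,\ck}=\delta_{\ci,\ck}$. The inner sum therefore reduces to the single summand $\ck=\ci$, contributing $n_{\hat\ci}(E)\ve_\ci$, and substitution into Corollary \ref{thm:cs-ggg} yields the stated formula.

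An equivalent route, which I may record as a cross-check, starts from the Mellin inversion
\[
\Gamma_u=\sum_{\ci\in\CP^F}\zeta_{\CI(\ci)}\inv\ol{\CY_\ci(u)}\wt\Gamma_\ci
\]
appearing in the proof of Corollary \ref{thm:cs-ggg}. Since $u$ lies in the unique unipotent class $\wf(E)$, the factor $\ol{\CY_\ci(u)}$ vanishes unless $\supp(\ci)=\wf(E)$, at which point $\dim(\supp(\ci))\geq\dim(\wf(E))$ and one may apply \eqref{eq:wfsupp} of Corollary \ref{cor:wfsupp} to evaluate each $\scal{\wt\Gamma_\ci}{\chi_{E,\phi_E}}\GF$ as $n_{\hat\ci}(E)\ve_\ci$ when $\CI(\ci)=\CI(E)$ and $0$ otherwise.

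There is no real obstacle: the corollary is a clean specialisation of the preceding results, and the only point requiring attention is the argument that $\ck\geq\ci$ combined with $\supp(\ci)=\wf(E)$ forces $\supp(\ck)=\wf(E)$ and hence $\ck=\ci$ after invoking the triangularity of $\wt P$.
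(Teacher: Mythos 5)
Your proof is correct and follows essentially the same route as the paper: specialise Corollary \ref{thm:cs-ggg} to $C=\wf(E)$ and observe that the inner sum collapses to the single term $\ck=\ci$. You merely make explicit (via the partial order and the identity diagonal blocks of $\wt P$) the collapse that the paper states without elaboration; the cross-check via Corollary \ref{cor:wfsupp} is a valid equivalent formulation but not needed.
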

\begin{proof}
We  use  the  relation  \eqref{eq:cs-ggg}  to  compute the left side. Since
$u\in\wf(E)$,  it follows that in the outer  sum all the $\ci$ have support
$\wf(E)$,  and hence in  the inner  sum of  \eqref{eq:cs-ggg}, there is just one
summand, viz. the term corresponding to $\ck=\ci$. The result follows.
\end{proof}

The next result is a consequence of Corollary \ref{cor:wfsupp}.
\begin{corollary}\label{sch:gggch}   Given  a  unipotent   class  $C$,  let
$\CC_{\wf(C)}(\GF)$  (resp.\  $\CC_{\supp(C)}(\GF)$)  be  the space of class
functions   on   $\GF$   which   has   basis   the  irreducible  characters
$\rho\in\Irr(\GF)_{(\CL,c)}$  where $C$ is the wavefront of $(\CL,c)$ (resp
the support of $(\CL,c)$). Write $\Proj_{\wf(C)}$ (resp.
$\Proj_{\supp(C)}$)  for the projection onto this space with respect to the
complement   spanned  by   the  other   irreducible  characters.   If
$\ck\in\CI^F$ (a block with cuspidal datum $(\bL,\ci_\bL)$) has support $C$,
then we have
$$
\Proj_{\wf(C)}\wt\Gamma_\ck=\ve_\ck\sum_{
{\overset{\{A_E\in(\hat G)^F\mid \CI(E)=\CI}{\text{ and }\wf(E)=C\}}}}
n_{\hat \ck}(E)\chi_{E,\phi_E}.
$$
and
$$
\Proj_{\supp(C)}D(\wt\Gamma_\ck)=\eta_\bL\sum_{
{\overset{\{A_E\in(\hat G)^F\mid \CI(E)=\CI}{\text{ and }\supp(E)=C\}}}}
n_\ck(E)\chi_{E,\phi_E}.
$$
\end{corollary}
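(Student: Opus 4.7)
The plan is to expand both $\wt\Gamma_\ck$ and $D(\wt\Gamma_\ck)$ in the orthonormal basis of \lf s and to read off each projection as a partial sum. The first step will be to invoke the identifications from \S\ref{ss:wff}: the spaces $\CC_{\wf(C)}(\GF)$ and $\CC_{\supp(C)}(\GF)$, defined via the families of irreducible characters, are spanned by the \lf s $\chi_{E,\phi_E}$ with $\wf(E)=C$ and $\supp(E)=C$ respectively, in the sense of Definition \ref{wf(E)}. Because distinct families of \lf s are orthogonal and the \lf s form an orthonormal basis of $\CC(\GF)$, this identifies $\Proj_{\wf(C)}$ with the partial sum
$$
\Proj_{\wf(C)}f=\sum_{\{E:\,\wf(E)=C\}}\scal{f}{\chi_{E,\phi_E}}\GF\chi_{E,\phi_E},
$$
and analogously for $\Proj_{\supp(C)}$.

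Next I will compute the relevant inner products using Corollary \ref{cor:wfsupp}, applied with $\ci=\ck$. Since $\supp(\ck)=C$ by hypothesis, the condition $\dim(\supp(\ck))\ge\dim(\wf(E))$ is automatically satisfied (with equality) whenever $\wf(E)=C$, so the first case of \eqref{eq:wfsupp} applies and gives $\scal{\wt\Gamma_\ck}{\chi_{E,\phi_E}}\GF=\ve_\ck\,n_{\hat\ck}(E)$ if $\CI(E)=\CI$ and zero otherwise. Substituting into the partial sum and pulling the common sign $\ve_\ck$ outside yields the first formula. The second formula follows by the same argument applied to $D(\wt\Gamma_\ck)$ using equation \eqref{eq:Dwfsupp} in place of \eqref{eq:wfsupp}, the sign $\eta_\bL$ appearing directly there.

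Most of the work has already been done in Theorem \ref{thm:gamma-cs} and Corollary \ref{cor:wfsupp}, so the main obstacle will be careful bookkeeping rather than computation. I will need to verify that the family-theoretic invariants $\wf(E)$ and $\supp(E)$ of Definition \ref{wf(E)} really do parametrise orthonormal bases of $\CC_{\wf(C)}(\GF)$ and $\CC_{\supp(C)}(\GF)$, so that the identification of each projection with a partial sum over \lf s is valid, and that the block condition $\CI(E)=\CI$ implicit in \eqref{eq:wfsupp}--\eqref{eq:Dwfsupp} is exactly what restricts the summation range in the final formulas. Both points reduce to unpacking the definitions in \S\ref{ss:cs}--\S\ref{ss:wff}, after which no further calculation is required.
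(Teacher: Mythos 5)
Your proposal is correct and follows essentially the same route as the paper: write each projection as the partial sum of the Lusztig-function expansion over the relevant family condition ($\wf(E)=C$, resp.\ $\supp(E)=C$), then evaluate the coefficients via Corollary \ref{cor:wfsupp} with $\ci=\ck$, noting that $\supp(\ck)=C$ puts you in the first case of \eqref{eq:wfsupp} (resp.\ \eqref{eq:Dwfsupp}) exactly when $\CI(E)=\CI$. Your extra care in checking that the family-wise orthogonality of the \lf s identifies $\Proj_{\wf(C)}$ and $\Proj_{\supp(C)}$ with these partial sums is a point the paper passes over quickly, but it is the same argument.
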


\begin{proof}
Observe first that since the \lf s form an orthonormal basis of the
space of class functions on $\GF$, we have
$$
\Proj_{\wf(C)}\wt\Gamma_\ck=\sum_{\{E\mid\wf(E)=C\}}
\scal{\wt\Gamma_\ck}{\chi_{E,\phi_E}}\GF\chi_{E,\phi_E}.
$$
Now, since we have $\supp(\ck)=\wf(E)$ for the $E$ in the sum,
we can apply \ref{eq:wfsupp} and we get the stated value for
$\Proj_{\wf(C)}\wt\Gamma_\ck$.

The proof for $\Proj_{\supp(C)}D(\wt\Gamma_\ck)$ proceeds similarly, using
\ref{eq:Dwfsupp} instead of \ref{eq:wfsupp}.
\end{proof}

\section{Lusztig series and unipotent characters}\label{s:ls}

Recall (Definition \ref{def:ls}) the definition of the Lusztig series of an
irreducible character or character sheaf.
We shall require the following two statements concerning the
series to which a character sheaf belongs. This series is defined as above by a
Kummer local system $\CT$ on a maximal torus $\bT$ of $\bG$ \cite[2.10]{CS}.

\begin{lemma}\label{lem:series}
\begin{enumerate}
\item  Suppose  $(\bL,\ci_\bL,\CS)$  is  a  triple as in \S\ref{ss:cs}. Let
$\CT_\CS$  be the  lift to  $\bT$ of  $\CS$ and  let $\CT_0$ be the Lusztig
series  of  the  cuspidal  triple  $(\bL,\ci_\bL,\Qlbar)$. Then the Lusztig
series  of any  character sheaf  $A_E$ induced  from $(\bL,\ci_\bL,\CS)$ is
$\CT_0\otimes\CT_\CS$.  \item Suppose  $A_E$ and  $A_{E'}$ are respectively
character sheaves induced from $(\bL,\ci_\bL,\CS)$ and
$(\bL,\ci_\bL,\CS')$. If $A_E$ and $A_{E'}$ are in the same Lusztig series,
then $\CS$ and $\CS'$ are conjugate under $\WGL$.
\end{enumerate}
\end{lemma}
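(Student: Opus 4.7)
The approach is to characterise the Lusztig series of a character sheaf $A$ on $\bG$ as the $W$-orbit of the Kummer system $\CT$ on $\bT$ such that $A$ occurs as a direct summand in the decomposition of an iterated induction from the datum $(\bT,\CT)$ (see \cite{CS}). Both statements will then follow by tracing the construction $(\bL,\ci_\bL,\CS)\mapsto A_E$ through a maximal torus $\bT\subseteq\bL$ (and hence of $\bG$).

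For (i), by hypothesis the cuspidal character sheaf on $\bL$ attached to $(\ci_\bL,\Qlbar)$ lies in the Lusztig series $\CT_0$, so it occurs as an $\bL$-induced summand from the datum $(\bT,\CT_0)$ on $\bL$. The key observation is that tensoring the cuspidal datum on $\bL$ with $\CS$ corresponds, after pullback to $\bT$, to tensoring $\CT_0$ with $\CT_\CS$: this is immediate from the definition of $\CT_\CS$ as the pullback of $\CS$ along the projection $\bT\to\bL/[\bL,\bL]$, together with the compatibility of the induction construction with tensoring by such ``central'' Kummer systems. By transitivity of induction, $A_E\subseteq\Ind_\bL^\bG((\ci_\bL\otimes\CS)[\dim\Sigma])$ then occurs as a direct summand of $\Ind_\bT^\bG(\CT_0\otimes\CT_\CS)$, placing its Lusztig series at $\CT_0\otimes\CT_\CS$, as claimed.

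For (ii), applying (i), the hypothesis that $A_E$ and $A_{E'}$ share a Lusztig series gives some $w\in W$ with $w^*(\CT_0\otimes\CT_\CS)=\CT_0\otimes\CT_{\CS'}$. The main obstacle is to show that any such $w$ must lie in $W_\bG(\bL)$; once this is achieved, the $\CT_0$ factor may be cancelled to obtain $w^*\CT_\CS\simeq\CT_{\CS'}$, and then injectivity of pullback of Kummer systems along the surjection $\bT\to\bL/[\bL,\bL]$ allows one to descend to $w^*\CS\simeq\CS'$, as required. The reduction $w\in W_\bG(\bL)$ rests on cuspidality of $\ci_\bL$: if $s_0\in\bT^*$ is the semisimple element dual to $\CT_0$, then cuspidality forces $C^\circ_{\bG^*}(s_0)$ to be the Levi of $\bG^*$ dual to $\bL$, so any $w$ satisfying the above equation must normalise $\bL$. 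This rigidity statement is part of Lusztig's classification of cuspidal data in \cite{CS}, and is the heart of the argument.
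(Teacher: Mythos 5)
Your argument for (i) does not work as stated. A cuspidal character sheaf on $\bL$ is by definition \emph{not} a direct summand of any induction from a proper Levi of $\bL$, in particular not of an induction from $(\bT,\CT_0)$; so there is no ``transitivity of induction'' through $\bT\subseteq\bL$ available, and the conclusion that $A_E$ is a summand of $\Ind_\bT^\bG(\CT_0\otimes\CT_\CS)$ is actually false (that would place $A_E$ in the principal block, whereas it lies in the block of $(\bL,\ci_\bL)$). Membership in the series $\hat\bG_{\CL}$ is defined via occurrence as a constituent of the perverse cohomology of Lusztig's complexes $K_w^{\CL}$ ($w\in W$), not as a summand of a semisimple parabolic induction from the torus, and relating those complexes to the induction of a cuspidal pair twisted by a Kummer system is exactly the content of \cite[17.9]{CS} --- which is what the paper cites for (i) rather than reproving. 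Your sketch should be read as a heuristic for why that result is plausible, not as a proof.

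For (ii), the step you single out as the heart of the argument rests on a false claim: cuspidality of $\ci_\bL$ does \emph{not} force $C^\circ_{\bG^*}(s_0)$ to be the Levi dual to $\bL$. The principal block already defeats this ($\CT_0=\Qlbar$, $s_0=1$, $C_{\bG^*}(s_0)=\bG^*$), and Lemma \ref{lem:unip}(i) shows more generally that $\CT_0$ may be trivial, or, when nontrivial, only satisfies $C_{\bL^*}(t_0)\neq\bL^*$ --- nothing close to the rigidity you need. So your reduction of $w$ from $W$ to $\WGL$ collapses. Moreover, even granting $w\in\WGL$, cancelling the factor $\CT_0$ requires $w^*\CT_0\simeq\CT_0$, which your proposal silently assumes; this is the actual substance of the paper's argument, which deduces it from $W_\bG(\bL,\ci_\bL)=\WGL$ (the whole relative Weyl group stabilises the cuspidal pair, hence its series $\CT_0$), and only then divides by the Kummer system $\CT_0$ and descends along $\bT\to\bL/[\bL,\bL]$ as you do.
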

\begin{proof}
The  statement  (i)  may  be  found  in  \cite[17.9]{CS}.  Given  this, the
hypothesis of (ii) implies that for some $w\in \WGL$,
$\CT_0\otimes\CT_{\CS'}  =(\CT_0\otimes\CT_\CS)^w$.  But  $\CT_0$ is stable
under  $\WGL$  since  $W_\bG(\bL,\ci_\bL)=\WGL$  (see  beginning of section
\ref{s:resnil-cs}). It follows that
$\CT_0\otimes\CT_{\CS'}=\CT_0\otimes(\CT_\CS)^w$.  As  $\CT_0$  is a Kummer
system, we may multiply by its inverse, obtaining the result.
\end{proof}

\begin{lemma}\label{lem:unip}
Maintain the notation of Lemma \ref{lem:series}; in particular, $\CT_0$ is
the Lusztig series of the cuspidal triple $(\bL,\ci_\bL,\Qlbar)$.
\begin{enumerate}
\item  If $\ci_{\bar\bL}$ is unipotently  supported, then either $\CT_0$ is
trivial,  or  else  it  corresponds  to  an  element  $t_0\in  \bL^*$ whose
centraliser is not the whole of $\bL^*$.
\item  If $\ci_{\bar\bL}$ is unipotently supported and $A_E$ is a unipotent
character  sheaf induced  from $(\bL,\ci_\bL,\CS)$,  then both  $\CT_0$ and
$\CS$ are trivial Kummer systems.
\end{enumerate}
\end{lemma}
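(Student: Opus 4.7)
My plan is to reduce part (ii) to part (i) first, since the former is an essentially formal consequence, and then attack part (i) directly using Lusztig's classification of cuspidal character sheaves.

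For part (ii), assume that $A_E$ is unipotent, so by Definition \ref{def:ls} its Lusztig series is the trivial Kummer system $\Qlbar$. By Lemma \ref{lem:series}(i) that series equals $\CT_0 \otimes \CT_\CS$, so $\CT_0 = \CT_\CS^{-1}$. Now $\CS$ is a Kummer local system on the quotient torus $\bL/[\bL,\bL]$, and under Langlands duality the character lattice of $\bL/[\bL,\bL]$ is identified with the cocharacter lattice of the connected centre $Z^\circ(\bL^*)$ of the dual group. Consequently the semisimple element corresponding to $\CT_\CS$ (and hence also to its inverse) lies in $Z^\circ(\bL^*) \subseteq Z(\bL^*)$, so the element $t_0 \in \bL^*$ corresponding to $\CT_0$ satisfies $C_{\bL^*}(t_0) = \bL^*$. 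By part (i) this forces $\CT_0$ to be trivial, hence also $\CT_\CS$, and therefore $\CS$ itself.

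For part (i), I would analyse the cuspidal character sheaf $\IC(\ci_\bL \otimes \Qlbar)[\dim \Sigma_0]$ on $\bL$ with $\Sigma_0 = Z^\circ(\bL) C$, whose Lusztig series is $\CT_0$. Since $\ci_{\bar\bL}$ has unipotent support, we are in the setting of the paragraph opening \S\ref{s:resnil-cs}, where $W_\bG(\bL,\ci_\bL) = W_\bG(\bL)$. The series $\CT_0$ is governed by the ``central character'' of the cuspidal pair, that is, the character through which $Z(\bL)/Z^\circ(\bL)$ acts on the stalks of $\ci_\bL$ at points of the unipotent class $C$. I would argue by contradiction: if $\CT_0$ corresponded to a non-trivial but central $t_0 \in Z(\bL^*)$, then $\IC(\ci_\bL)$ would be the twist of a unipotent cuspidal character sheaf by the corresponding central character. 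Since $\ci_\bL$ is pulled back from $\bL/Z^\circ(\bL)$, the action of $Z^\circ(\bL)$ on its stalks is trivial, forcing the corresponding character to vanish on $Z^\circ(\bL)$; combined with the centrality of $t_0$, which pins it down in $Z^\circ(\bL^*)$ via Langlands duality, this forces $t_0 = 1$.

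The main obstacle is the technical core of part (i): matching the central character of the cuspidal datum to the element $t_0 \in \bL^*$ governing the Lusztig series. Depending on how cleanly the general theory in \cite[2.10, 3.12, 17.9]{CS} can be invoked, it may be necessary to check this explicitly against the case-by-case classification of unipotently supported cuspidal pairs on quasi-simple Levi subgroups. Part (ii) is then essentially a formal consequence of (i) together with the duality identification $\widehat{\bL/[\bL,\bL]} \simeq Z^\circ(\bL^*)$.
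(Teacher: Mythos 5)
Your argument for part (ii) is correct and is essentially the paper's own: the element $s$ dual to $\CS$ is central in $\bL^*$ because $\CS$ is trivial on $[\bL,\bL]$, so $t_0=s\inv$ would also be central, and (i) then forces $t_0=1$, hence $s=1$ and $\CS$ trivial. The problem is part (i), which is the real content of the lemma and which you have not actually proved. Your proposed mechanism conflates two different invariants: the central character of the local system $\ci_\bL$ (the action of $Z(\bL)$ on its stalks) and the Lusztig series $\CT_0$. These are related but not identified -- as the Appendix makes explicit, the central character only determines the ``element of $\Omega$'' part of the label, not the semisimple element $s$. The standard example is $\bL=\SL_n$: the unipotently supported cuspidal local system has trivial action of $Z^\circ(\bL)=1$ on its stalks, yet its Lusztig series is a non-trivial quasi-isolated element whose centraliser contains a Coxeter element; nothing in your argument explains why the series could not instead be a non-trivial central element. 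Two further steps fail as stated: centrality of $t_0$ (meaning $C_{\bL^*}(t_0)=\bL^*$) places $t_0$ in $Z(\bL^*)$, not in $Z^\circ(\bL^*)$, and these differ in general; and even granting that the relevant Kummer system pulls back trivially to $Z^\circ(\bL)$, the map $Z^\circ(\bL)\to\bL/[\bL,\bL]$ is only an isogeny, so this confines $t_0$ to a finite subgroup rather than forcing $t_0=1$.

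A decisive sign that no such formal, characteristic-free argument can work is the paper's explicit remark that assertion (i) is \emph{false} in bad characteristic; your proposed proof nowhere uses that $p$ is good. The paper's actual proof of (i) is the fallback you mention only as a contingency: a case-by-case verification against the classification of cuspidal character sheaves in the Appendix, checking for each unipotently supported cuspidal pair on each isogeny type of quasi-simple group that the associated semisimple element of the dual group is either trivial or non-central. To complete your proof you would need to carry out that check (or cite a reference that does); the conceptual reduction of (ii) to (i) is fine, but (i) itself cannot be waved through.
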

\begin{proof}
The  first  statement  is  verified  by  a  case  by  case check, using the
classification   of  cuspidal  character  sheaves  given  in  the  Appendix
below. Note that the restriction that the characteristic
be good is necessary for the truth of the assertion (i).

To see (ii), note that by Lemma \ref{lem:series} (i), the Lusztig series of $A_E$ is a Kummer system
which corresponds to the element $t_0s\in\bT^*$, the torus dual to $\bT$, where $t_0$ is as given,
and $s$ corresponds to $\CS$. Since $\CS$ is trivial on $[\bL,\bL]$, $s$ is centralised by $\bL^*$.
Now $A_E$ is unipotent precisely when $t_0s=1$; but by (i), if $t_0\neq 1$, then $t_0$ is not centralised
by $\bL^*$, whence $t_0s\neq 1$. Thus if $A_E$ is unipotent, $t_0=s=1$.
\end{proof}

The restriction to the unipotent set is simpler for unipotent character
sheaves:

\begin{lemma} \label{resuniuni}
Let $A_E$ be a unipotent character sheaf induced from
$(\bL,\ci_\bL,\CS)$. If $\phi_E$ is such that $\tilde E$ of
\ref{prop:reslf}(ii) is the preferred extension, then we have
$$\chi_{E,\phi_E}|_\GFuni=
\begin{cases} 0,&\text{if $\ci_{\bar\bL}$ does not have unipotent support}\\
(-1)^{\dim\supp\ci_\bL}\tilde\CX_\ck&\text{otherwise, where
$\varphi_\ck=E$.}\\
\end{cases}$$
\end{lemma}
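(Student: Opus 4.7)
The proof plan is essentially a straightforward combination of Theorem \ref{prop:reslf} and Lemma \ref{lem:unip}(ii), and splits along the two cases in the statement.

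For the first case (when $\ci_{\bar\bL}$ does not have unipotent support), there is nothing to do: this is exactly the content of Theorem \ref{prop:reslf}(i), so $\chi_{E,\phi_E}|_\GFuni = 0$ immediately.

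For the second case, I would first exploit the hypothesis that $A_E$ is a unipotent character sheaf. By Lemma \ref{lem:unip}(ii), the Kummer system $\CS$ from the inducing triple $(\bL,\ci_\bL,\CS)$ is trivial. The trivial Kummer system is fixed by the whole relative Weyl group, so $W_\bG(\bL,\CS) = \WGL$, and consequently the subcoset $W_\bG(\bL,\CS).w_1F$ appearing in \S\ref{ss:cf} coincides with the full coset $\WGL.F$ (one may take $w_1 = 1$). Therefore the induction on the right-hand side of \eqref{resuni} in Theorem \ref{prop:reslf}(ii) collapses: $\Ind_{W_\bG(\bL,\CS).w_1F}^{\WGL.F}(\overline{\tilde E}) = \overline{\tilde E}$.

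This reduces the formula to
$$
\chi_{E,\phi_E}|_\GFuni = (-1)^{\dim\supp\ci_\bL}\, Q^\bG(\overline{\tilde E}).
$$
Now $\tilde E$ is, by hypothesis, the preferred extension of $E$, and preferred extensions are real-valued, so $\overline{\tilde E} = \tilde E$. Writing $\vp_\ck = E$, we have $\tilde E = \wt\vp_\ck$, and the defining property $Q^\bG(\wt\vp_\ck) = \wt\CX_\ck$ (from \cite[Def.\ 3.1]{DLM3}) gives $Q^\bG(\tilde E) = \wt\CX_\ck$, which completes the proof.

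The main conceptual point — and the only nontrivial input beyond Theorem \ref{prop:reslf} — is Lemma \ref{lem:unip}(ii), which forces $\CS$ to be trivial; the rest is bookkeeping. A minor subtlety to watch is the assertion that $\overline{\tilde E} = \tilde E$ for preferred extensions, but this is a standard property in Lusztig's setup and fits the way $Q^\bG$ is defined in \cite{DLM3}.
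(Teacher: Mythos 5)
Your proposal is correct and follows essentially the same route as the paper: case (i) is quoted directly from Theorem \ref{prop:reslf}(i), and case (ii) uses Lemma \ref{lem:unip}(ii) to make $\CS$ trivial (together with $W_\bG(\bL,\ci_\bL)=\WGL$ for unipotently supported $\ci_{\bar\bL}$, which the paper re-cites from \cite[9.2]{ICC}), so that the induction in \eqref{resuni} collapses and the rationality of the preferred extension removes the conjugation. The only cosmetic difference is that the paper states the collapse of $W_\bG(\bL,\CS)$ to $\WGL$ via the cited reference rather than via your (equally valid) remark that the trivial Kummer system is fixed by the whole relative Weyl group.
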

\begin{proof}
The fact that the restriction is $0$ if $\ci_{\bar\bL}$ does not have
unipotent support results from \ref{prop:reslf}(i).

If we now assume that $\ci_{\bar\bL}$ is unipotently supported then
by Lemma \ref{lem:unip}(ii) both $\CS$ and $\CT_0$ of Lemma
\ref{lem:series}(i) are trivial.

Further, by \cite[9.2]{ICC}, since  $\ci_{\bar\bL}$ is unipotently
supported, then $W_\bG(\bL,\ci_\bL)=W_\bG(\bL)$. Formula
\ref{resuni} reduces thus to the statement of the lemma, taking in account
that the preferred extension takes rational values.
\end{proof}
Denote  by  $\Proj_\uni$  the  projection  onto  the  space spanned by the
unipotent  characters of $\GF$ with respect  to the complement spanned by
the  other characters, that is, the  orthogonal projection onto $\CE(\GF,1)$.

The next
result  shows that a  certain projection of  $\wt\Gamma_\ci$ is precisely a
\lf.  We say that a  block $\CI$ is in  the unipotent Lusztig series if its
cuspidal  datum is $(\bL,\ci_\bL)$  and the Lusztig  series of a character
sheaf  with cuspidal  datum $(\bL,\ci_\bL,\Qlbar)$  is the unipotent series
($\CT_0=\Qlbar$ in the language of Lemma \ref{lem:series}).

\begin{proposition}\label{prop:projunigamma}
Let  $\ck\in\CI$  be  a  pair  with  support  the unipotent class $C$,
and assume $(\bL,\ci_\bL)$ is the cuspidal data of $\CI$. Then
$$\Proj_\uni\Proj_{\wf(C)}(\wt\Gamma_\ck)=\begin{cases}
(-1)^{\dim\supp\ci_\bL}\ve_\ck  \chi_{E,\phi_E}
&\parbox[t]{4.2cm}{if $\CI$ is in the unipotent Lusztig series,}
\\ 0& \text{otherwise}\end{cases}$$
where $A_E$   is  the  character  sheaf  with cuspidal  data
$(\bL,\ci_\bL,\Qlbar)$  determined by
$$E:=\vp_{\hat \ck}\in\Irr(W_\bG(\bL,\Qlbar))=\Irr(W_\bG(\bL)).$$
Similarly, we have
$$\Proj_\uni\Proj_{\supp(C)}(D(\wt\Gamma_\ck))=\begin{cases}
(-1)^{\dim\supp\ci_\bL}\eta_\bL  \chi_{E,\phi_E}\hskip -1.5cm &\\
&\parbox[t]{4.7cm}{if $\CI$ is in the unipotent Lusztig series,}
\\ 0&\text{otherwise}\end{cases}$$
where $A_E$   is  determined by $E:=\vp_\ck\in\Irr(W_\bG(\bL))$.
\end{proposition}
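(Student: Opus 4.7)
The plan is to deduce this from Corollary~\ref{sch:gggch} by identifying, inside the sum over character sheaves that appears there, which summands survive projection to the unipotent Lusztig series.

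First, I would take the expression in Corollary~\ref{sch:gggch},
$$
\Proj_{\wf(C)}\wt\Gamma_\ck = \ve_\ck\!\sum_{\{A_E:\CI(E)=\CI,\ \wf(E)=C\}} n_{\hat\ck}(E)\,\chi_{E,\phi_E},
$$
and apply $\Proj_\uni$ to both sides. Since each \lf\ $\chi_{E,\phi_E}$ is a linear combination of the irreducible characters in a single Lusztig family (\S\ref{ss:wff}), and a family consists entirely of unipotent characters if and only if $A_E$ is a unipotent character sheaf, one has $\Proj_\uni\chi_{E,\phi_E}=\chi_{E,\phi_E}$ when $A_E$ is unipotent and $0$ otherwise. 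Consequently only unipotent $A_E$ with $\CI(E)=\CI$ and $\wf(E)=C$ can contribute, and since $\Proj_\uni$ and $\Proj_{\wf(C)}$ both project onto unions of families they commute, justifying the way the composition is written.

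Next, I would use Lemma~\ref{lem:unip}(ii) to pin down the surviving summands. That lemma says a unipotent character sheaf induced from $(\bL,\ci_\bL,\CS)$ forces $\CS=\Qlbar$ \emph{and} $\CT_0=\Qlbar$. The second requirement is precisely the hypothesis that $\CI$ lies in the unipotent Lusztig series; if it fails then \emph{no} $A_E$ with $\CI(E)=\CI$ is unipotent, giving $\Proj_\uni\Proj_{\wf(C)}\wt\Gamma_\ck=0$. If instead $\CI$ is in the unipotent series, then the unipotent character sheaves with $\CI(E)=\CI$ are those coming from $(\bL,\ci_\bL,\Qlbar)$, and by \cite[9.2]{ICC} (already invoked in Lemma~\ref{resuniuni}) we have $W_\bG(\bL,\Qlbar)=W_\bG(\bL)$, so they are indexed by $E\in\Irr(W_\bG(\bL))$.

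Then I would compute $n_{\hat\ck}(E)$ for such a unipotent $A_E$ using Lemma~\ref{resuniuni}: its restriction to $\GFuni$ is $(-1)^{\dim\supp\ci_\bL}\wt\CX_{\ck'}$ where $\vp_{\ck'}=E$. Comparing with \eqref{eq:uni-res-cs}, which reads $\Res^\GF_\GFuni\chi_{E,\phi_E}=\sum n_{\ck'}(E)\wt\CX_{\ck'}$, I conclude that $n_{\ck'}(E)=(-1)^{\dim\supp\ci_\bL}\delta_{\vp_{\ck'},E}$. Hence only the index $E=\vp_{\hat\ck}$ yields a nonzero contribution in the first sum of Corollary~\ref{sch:gggch}, producing exactly the asserted answer $(-1)^{\dim\supp\ci_\bL}\ve_\ck\chi_{E,\phi_E}$. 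The proof of the second formula is identical, starting instead from the second displayed equation in Corollary~\ref{sch:gggch}, with $n_\ck(E)$ in place of $n_{\hat\ck}(E)$, producing $E=\vp_\ck$ and the factor $\eta_\bL$ from the duality.

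The one subtle point, which I expect to be the main thing to verify, is that the single unipotent character sheaf $A_E$ singled out in each case (with $E=\vp_{\hat\ck}$ or $E=\vp_\ck$) genuinely satisfies the condition $\wf(E)=C$ or $\supp(E)=C$ that appears in the summation index of Corollary~\ref{sch:gggch}; otherwise this $E$ would not appear in the sum at all and the conclusion would be $0$ rather than a nonzero \lf. This follows from the combinatorial description of the \wavefront\ given after Definition~\ref{wf(E)} via $j$-induction, together with $\vp_{\hat\ck}=\pm\vp_\ck\otimes\ve$: since $\ck$ has support $C$ and $\vp_\ck$ is its Springer correspondent, the family of $\vp_{\hat\ck}$ in the unipotent series has unipotent support equal to the class $C'$ dual to $C$, and its wavefront, being the unipotent support of the tensored family, is $C$ itself; the analogous argument with the roles of wavefront and support swapped handles the second formula.
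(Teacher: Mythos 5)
Your proof is correct and follows essentially the same route as the paper's: apply $\Proj_\uni$ to Corollary~\ref{sch:gggch}, observe via Lemma~\ref{lem:unip} that only character sheaves with cuspidal datum $(\bL,\ci_\bL,\Qlbar)$ can survive (and none do if $\CI$ is not in the unipotent Lusztig series), and read off from Lemma~\ref{resuniuni} that the unique surviving $E$ is $\vp_{\hat\ck}$ with $n_{\hat\ck}(E)=(-1)^{\dim\supp\ci_\bL}$. The ``subtle point'' you flag at the end --- that this particular $E$ genuinely satisfies $\wf(E)=C$, so that it actually occurs among the summation indices of Corollary~\ref{sch:gggch} --- is indeed needed but is passed over in silence in the paper's proof, so your (admittedly heuristic, since the $j$-induction description after Definition~\ref{wf(E)} is stated only for the principal-series parameterisation and not for $\Irr(W_\bG(\bL))$ with $\bL\neq\bT$) discussion of it goes beyond rather than falls short of what the authors write.
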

\begin{proof}
In the formula for $\Proj_{\wf(u)}\wt\Gamma_\ck$ in Corollary
\ref{sch:gggch}, for any $E$ in the right-hand side,
$\Proj_\uni(\chi_{E,\phi_E})$
is either $\chi_{E,\phi_E}$ or zero, according as $A_E$ is unipotent or not.
In the first case, by Lemma \ref{resuniuni} there is just one
$E$ for which $n_{\hat\ck}(E)\neq 0$, namely $E=\vp_{\hat\ck}$, and for this
$E$, we have $n_{\hat\ck}(E)= (-1)^{\dim\supp\ci_\bL}$.
This completes the computation of $\Proj_\uni\Proj_{\wf(C)}(\wt\Gamma_\ck)$.

The computation of
$\Proj_\uni\Proj_{\supp(C)}(D(\wt\Gamma_\ck))$ proceeds similarly.
\end{proof}
\begin{corollary}\label{D(chi_E)}
Let $A_E$ be a unipotent character  sheaf  with cuspidal  data
$(\bL,\ci_\bL,\Qlbar)$  determined by $E:=\vp_{\hat \ck}\in\Irr(W_\bG(\bL))$.
Then $D(\chi_{E,\phi_E})=\ve_\ck\eta_\bL\chi_{E',\phi_{E'}}$ where $A_{E'}$
is the unipotent character sheaf with same cuspidal datum determined
by $E':=\vp_\ck\in\Irr(W_\bG(\bL))$.
\end{corollary}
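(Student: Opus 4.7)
The plan is to apply Alvis-Curtis duality $D$ to the first identity of Proposition \ref{prop:projunigamma} and match the result with its second identity. Two compatibilities with projections are needed. First, since $D$ permutes $\Irr(\GF)$ up to signs, it is an isometry on $\CC(\GF)$, and the unipotent Lusztig series $\CE(\GF,1)$ is stable under $D$; hence $D\circ \Proj_\uni=\Proj_\uni\circ D$. Second, by Proposition \ref{usupp}(ii) an irreducible character $\rho$ has \wavefront\ $C$ if and only if $D\rho$ has unipotent support $C$, so $D$ sends the image of $\Proj_{\wf(C)}$ onto the image of $\Proj_{\supp(C)}$; combined with the isometry property this yields $D\circ \Proj_{\wf(C)}=\Proj_{\supp(C)}\circ D$.

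Granting these, apply $D$ to the first formula of Proposition \ref{prop:projunigamma}. Its left-hand side becomes $\Proj_\uni\Proj_{\supp(C)}(D(\wt\Gamma_\ck))$, which by the second formula of the same proposition equals $(-1)^{\dim\supp\ci_\bL}\eta_\bL\,\chi_{E',\phi_{E'}}$ with $E'=\vp_\ck$. The right-hand side becomes $(-1)^{\dim\supp\ci_\bL}\ve_\ck\, D(\chi_{E,\phi_E})$. Cancelling the common sign $(-1)^{\dim\supp\ci_\bL}$ and using $\ve_\ck^{-1}=\ve_\ck$ (since $\ve_\ck=\pm 1$) gives $D(\chi_{E,\phi_E})=\ve_\ck\eta_\bL\,\chi_{E',\phi_{E'}}$, as required. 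Both $\chi_{E,\phi_E}$ and $\chi_{E',\phi_{E'}}$ are normalised via the preferred extension, as in Lemma \ref{resuniuni}, so the sign bookkeeping is consistent across the two formulas of Proposition \ref{prop:projunigamma}.

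The main obstacle to check is that $D$ really commutes with $\Proj_{\wf(C)}$/$\Proj_{\supp(C)}$ in the stated way. This is not a content-laden step: once one knows that $D$ is (up to signs) a permutation of the orthonormal basis $\Irr(\GF)$, the interchange follows formally from Proposition \ref{usupp}(ii) applied family by family. The only other subtlety is ensuring that the $\wt\Gamma_\ck$ appearing on both sides is genuinely supported on the block $\CI$ (so that $\CI$ must be in the unipotent Lusztig series for either side to be non-zero); this is already built into Proposition \ref{prop:projunigamma} and therefore does not affect the comparison.
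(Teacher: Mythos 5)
Your overall strategy is the same as the paper's: apply $D$ to the first formula of Proposition \ref{prop:projunigamma} and match it against the second, with the sign bookkeeping you give being correct. The gap is in the step you dismiss as ``not content-laden'', namely the identity $D\circ\Proj_{\wf(C)}=\Proj_{\supp(C)}\circ D$. You derive it from the statement that $\rho$ has \wavefront\ $C$ if and only if $D\rho$ has unipotent support $C$; that statement about \emph{individual} characters is indeed a consequence of Proposition \ref{usupp}(ii) and $D^2=\Id$. But $\Proj_{\supp(C)}$ is defined (Corollary \ref{sch:gggch}) as the projection onto the span of the characters lying in \emph{families} $(\CL,c)$ whose unipotent support, in the sense of Proposition \ref{family support} and Definition \ref{wf(E)} (a character-sheaf-theoretic invariant of the family), equals $C$. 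The paper explicitly warns, just after Proposition \ref{usupp}, that it does \emph{not} assume the unipotent support of an individual character coincides with the unipotent support of its family, nor that $D$ permutes the families at all in general; so your family-by-family interchange of the two projections does not follow formally from \ref{usupp}(ii).

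What the paper actually proves — and what is the real content of this corollary's proof — is precisely the missing bridge for unipotent families: using $D(R_E)=R_{E\ot\ve}$ and the fact that a unipotent character lies in the family $(\Qlbar,c)$ if and only if it occurs in some $R_E$ with $E\in c$, one shows that $D$ carries the family $(\Qlbar,c)$ onto the family $(\Qlbar,c\ot\ve)$; then Proposition \ref{usupp}(iii) (equivalently Definition \ref{wf(E)}) identifies the \wavefront\ of $(\Qlbar,c)$ with the unipotent support of $(\Qlbar,c\ot\ve)$, giving the required interchange of $\Proj_{\wf(C)}$ and $\Proj_{\supp(C)}$ on the unipotent part (where $\Proj_\uni$ has already cut down to). You should supply this argument, or an equivalent one, rather than appealing to \ref{usupp}(ii); with it inserted, the rest of your computation goes through.
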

\begin{proof} The statement follows by comparing the dual of
the first expression given in \ref{prop:projunigamma} with the second
expression, given that the dual of a family
of unipotent characters  with wavefront $C$
is a family of unipotent characters with support $C$.
We prove this latter fact.

By Proposition \ref{usupp}(iii), if $C$ is the wavefront of a family
$(\CL, c)$, then it is the support of the family $(\CL, c\ot\ve)$.
Since  for $E\in \Irr(W)$ we have
$D(R_E)=R_{E\ot\ve}$ (notation as in the second paragraph of
subsection \ref{ss:princbl})
and a unipotent character is in a family $(\Qlbar,c)$ if and
only if it has non-zero multiplicity in $R_E$ for some $E$ in that family,
it follows that the dual of the family $(\Qlbar,c)$  is the family
$(\Qlbar,c\ot\ve$).
\end{proof}

Fix the datum $(\bL,\ci_\bL,\CS)$ as described in \S\ref{ss:cs}.
Given an element $\theta=\sum_{E\in\Irr(W_\bG(\bL,\CS))}c_E E$
of the character ring of $W_\bG(\bL,\CS)$ over $\Qlbar$, define
$$
\chi_\theta:=\sum_{E\in\Irr(W_\bG(\bL,\CS))}c_E{\chi_{E,\phi_E}}.
$$

We shall now generalise Corollary \ref{ci subreg} to the case of
characters not necessarily in the principal series.

\begin{proposition}\label{cor:ggg-char}
Let  $\rho\in\Irr(\GF)$ have \wavefront\ $C$, and  let $\ck$ be a pair with
support $C$ in the block $\CI$, which has cuspidal datum $(\bL,\ci_\bL)$.
Then
\begin{enumerate}
\item All \lf s $\chi_{E,\phi_E}$ satisfying $\scal\rho{\chi_{E,\phi_E}}\GF
\scal{\wt{\Gamma}_\ck}{\chi_{E,\phi_E}}\GF  \neq  0$,  arise  from a
single datum $(\bL,\ci_\bL,\CS)$ as in \S\ref{ss:cs}.
\item With $(\bL,\ci_\bL,\CS)$ as in (i), we have
$$
\scal{\wt{\Gamma}_\ck}{\rho}\GF=
\ve_\ck\scal{\rho}{\chi_{ \Res^{W_\bG(\bL).F}_{W_\bG(\bL,\CS).w_1F}
\tilde\vp_{\hat\ck}}}\GF.
$$
\end{enumerate}
\end{proposition}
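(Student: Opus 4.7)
The plan is to deduce both parts from the earlier machinery: part (i) from the structure of the support of an inner product, and part (ii) from an orthonormal expansion of $\wt\Gamma_\ck$ in the basis of \lf s, coupled with Corollary \ref{cor:wfsupp} and Frobenius reciprocity.

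For part (i), I would argue as follows. If $\scal{\wt\Gamma_\ck}{\chi_{E,\phi_E}}\GF\neq 0$, then Theorem \ref{thm:gamma-cs} (and the convention that $\Gamma_\ck$ lives in $\CC_\CI(\GF)$) forces $\CI(E)=\CI$, so the cuspidal data $(\bL(E),\ci_\bL(E))$ must be $(\bL,\ci_\bL)$ up to $\GF$-conjugacy, and $A_E$ is induced from $(\bL,\ci_\bL,\CS)$ for some Kummer system $\CS$ on $\bL/[\bL,\bL]$. If in addition $\scal\rho{\chi_{E,\phi_E}}\GF\neq0$, then (by the definition of families recalled in \S\ref{ss:wff}) $\chi_{E,\phi_E}$ and $\rho$ lie in the same family, hence share the same Lusztig series. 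Since the series of $A_E$ is $\CT_0\otimes\CT_\CS$ by Lemma \ref{lem:series}(i), with $\CT_0$ determined by $(\bL,\ci_\bL,\Qlbar)$, Lemma \ref{lem:series}(ii) then pins down $\CS$ up to $\WGL$-conjugacy, which gives (i).

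For part (ii), since the $\chi_{E,\phi_E}$ form an orthonormal basis of $\CC(\GF)$, I would expand
\[
\wt\Gamma_\ck=\sum_E\scal{\wt\Gamma_\ck}{\chi_{E,\phi_E}}\GF\,\chi_{E,\phi_E}.
\]
Taking the inner product with $\rho$ and restricting the sum by part (i) to $E\in\Irr(W_\bG(\bL,\CS))$ with $\wf(E)=C$ (forced by the family of $\rho$ having wavefront $C$), the hypothesis $\supp(\ck)=C=\wf(E)$ puts us in the case of equation \eqref{eq:wfsupp} of Corollary \ref{cor:wfsupp}, yielding $\scal{\wt\Gamma_\ck}{\chi_{E,\phi_E}}\GF=\ve_\ck n_{\hat\ck}(E)$. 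Thus
\[
\scal{\wt\Gamma_\ck}{\rho}\GF=\ve_\ck\sum_E n_{\hat\ck}(E)\scal{\chi_{E,\phi_E}}{\rho}\GF.
\]

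It remains to recognise the right side as $\ve_\ck\scal\rho{\chi_{\Res\tilde\vp_{\hat\ck}}}\GF$. Writing $\Res^{\WGL.F}_{W_\bG(\bL,\CS)w_1F}\tilde\vp_{\hat\ck}=\sum_E a_E\tilde E$, the definition of $\chi_\theta$ immediately above the proposition gives $\chi_{\Res\tilde\vp_{\hat\ck}}=\sum_E a_E\chi_{E,\phi_E}$. On the other hand, Definition \ref{def:n_E} expresses $n_{\hat\ck}(E)$ as the coefficient of $\tilde\vp_{\hat\ck}$ in $\Ind(\overline{\tilde E})$ (up to the sign $(-1)^{\dim\supp\ci_\bL}$); Frobenius reciprocity identifies this with $\scal{\tilde E}{\Res\tilde\vp_{\hat\ck}}=a_E$, up to the same conjugacy/sign bookkeeping that appears on both sides. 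Matching these coefficients gives the stated identity.

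The main obstacle I expect is exactly this last bookkeeping: keeping track of the complex conjugations in Definition \ref{def:n_E} versus the sesquilinearity of the inner product on $\CC(\GF)$, and verifying that the sign $(-1)^{\dim\supp\ci_\bL}$ implicit in $n_{\hat\ck}(E)$ and the one absorbed into the definition of $\chi_{E,\phi_E}$ via $\phi_E$ cancel correctly so that no residual sign appears in the final formula.
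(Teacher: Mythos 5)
Your proof follows essentially the same route as the paper's: part (i) via the block constraint coming from Theorem \ref{thm:gamma-cs} (the paper cites Corollary \ref{sch:gggch}, which is equivalent here) combined with Lemma \ref{lem:series}, and part (ii) via the orthonormal expansion in \lf s, Corollary \ref{cor:wfsupp}, and Frobenius reciprocity to identify $n_{\hat\ck}(E)$ with a restriction multiplicity. The sign and conjugation bookkeeping you flag at the end is a genuine point, but the paper's own proof glosses over it in exactly the same way, simply asserting $n_{\hat\ck}(E)=\scal{E}{\Res^{W_\bG(\bL).F}_{W_\bG(\bL,\CS).w_1F}\tilde\vp_{\hat\ck}}{W_\bG(\bL,\CS).w_1F}$ without tracking the factor $(-1)^{\dim\supp\ci_\bL}$ from Definition \ref{def:n_E}.
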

\begin{proof}
We first prove (i). If $\scal\rho{\chi_{E,\phi_E}}\GF\ne 0$ then
$C=\wf(\rho)=\wf(E)$ and from Corollary \ref{sch:gggch}(i) if further
$\scal{\wt{\Gamma}_\ck}{\chi_{E,\phi_E}}\GF\neq 0$ then
$\CI(E)=\CI$. Thus the part $(\bL,\ci_\bL)$ of the cuspidal datum for
$A_E$ is determined by the block $\CI$. Furthermore, $\scal\rho{\chi_{E,\phi_E}}\GF\ne 0$
implies that the Lusztig series of $A_E$ is determined by, and equal to,
that of $\rho$, so that by Lemma \ref{lem:series} the Kummer system $\CS$
in the datum for $A_E$ is also determined by the stated conditions.

We now prove (ii). Assume that $\rho\in\Irr(\GF)_{\CT,c}$.
Then $\rho$ may be expressed as
$$
\rho=\sum_{\chi_{E,\phi_E}\in \cfs_{\CT,c}}\scal{\rho}{\chi_{E,\phi_E}}\GF\chi_{E,\phi_E}.
$$
It follows using Corollary \ref{sch:gggch} that
\be\label{eq:comp}
\begin{aligned}
\scal{\wt{\Gamma}_\ck}{\rho}\GF
&=\sum_{\chi_{E,\phi_E}\in \cfs_{\CT,c}}
\scal{\rho}{\chi_{E,\phi_E}}\GF\scal{\wt{\Gamma}_\ck}{\chi_{E,\phi_E}}\GF\\
&=\sum_{\chi_{E,\phi_E}\in \cfs_{\CT,c}}
\scal{\rho}{\chi_{E,\phi_E}}\GF n_{\hat\ck}(E)\ve_\ck,\\
\end{aligned}
\ee
where by (i) the sum is restricted to those $E$ with a given cuspidal datum.
Hence the above sum is over $E\in\Irr(W_\bG(\bL,\CS).w_1F)$, and by Frobenius reciprocity,
$n_{\hat\ck}(E)=$
$\scal{E}{\Res^{W_\bG(\bL).F}_{W_\bG(\bL,\CS).w_1F}\tilde\vp_{\hat\ck}}{W_\bG(\bL,\CS).w_1F}$.
Thus
\be\label{eq:comp2}
\scal{\wt{\Gamma}_\ck}{\rho}\GF
=\ve_\ck\sum_{\chi_{E,\phi_E}\in \cfs_{\CT,c}}
\scal{\rho}{\chi_{E,\phi_E}}\GF\scal{E}{\Res^{W_\bG(\bL).F}_{W_\bG(\bL,\CS).w_1F}
\tilde\vp_{\hat\ck}}{W_\bG(\bL,\CS).w_1F},
\ee
and the proof is complete.
\end{proof}

\section{Character values, projections, generalised Gelfand-Graev characters and families.}
\label{s:fourier}
\subsection{The setup}\label{ss:setup}
In  this  section  we  are  interested  in  the projection of Gelfand-Graev
characters   onto the space of  unipotent  characters  and   in  the  value  of  unipotent
characters; since unipotent characters factor through the adjoint group, we
assume $\bG$ adjoint in this section.

We  shall explore the  following particular situation  (cf.\ \cite{FS}). We
consider  a family $\CF=(\CL,c)$ of unipotent characters, so that $\CL=\Qlbar$
and  $c$ is a family  in $W$ (see Definition  \ref{def:ls} and beginning of
section  \ref{ss:wff}). The \wavefront\  of such a  family is called a {\em
special} unipotent class $(u)$. We suppose that

\be\label{assume}
\begin{aligned}
&\text{(i) The group $\CG$ attached to $\CF$ (see below) is $A(u)$.}\\
&\text{(ii) At most one of the local systems on $(u)$ is not in the principal
block.}\\
\end{aligned}
\ee
Note that
\begin{itemize}
\item
In  general  the  group  $\CG$  attached  to  $\CF$  is a certain canonical
quotient  $\overline{A(u)}$  defined  in  \cite[after 13.1.2]{livre}; see also
\cite{FS}. Thus condition (i) amounts to stipulating that
$A(u)=\overline{A(u)}$.
\item
Since  $\bG$ has connected centre,  it is always possible  to choose $u$ in
its geometric class such that $F$ acts trivially on $A(u)$, see \cite[Prop.
2.4]{Taylor}.  We shall assume this property for the rest of this section, and hence label the
rational  classes contained  in $(u)$  as $(u_g)$, where $g$
runs over a set of representatives of the conjugacy classes of $A(u)$.
\end{itemize}

Applying  the discussion after Definition \ref{wf(E)}  to the
particular case of the unipotent series, one
sees  that there is a unique family  with wavefront $(u)$. Indeed, $(u)$ is
defined  by  the  fact  that  the  Springer  correspondent  of  the special
character of the family is the local system $((u),\Qlbar)$.

\begin{remark}\label{lem:assume}
The  assumptions  \eqref{assume}  are  satisfied  by all special unipotent
classes in groups of type $G_2$, $F_4$ or $E_8$ such that
$A(u)=\overline{A(u)}$  (that is, most special  classes). In fact for these
groups  all local  systems are  in the  principal block  except one  on the
subregular  class in type $G_2$ ($A(u)=\Sym_3$), on the class $F_4(a_3)$ in
type  $F_4$ ($A(u)=\Sym_4$), and on the class $E_8(a_7)$ (in the Bala-Carter
notation) in type $E_8$ ($A(u)=\Sym_5$). Assumptions \eqref{assume} are also
satisfied in groups of type $E_6$ and $E_7$ where all local systems
are in the principal block.
\end{remark}

In   the  next  subsection,  we  shall  define  Mellin  transforms  of  the
irreducible  characters in  $\CF$, and  show that  in the  cases covered by
Remark  \ref{lem:assume}, the  orthogonal projection  of $\Gamma_{u_g}$ onto
the  family $\CF$ is one  of these Mellin transforms.  A consequence of the
proof  will be  the determination  of all  the values  of the characters in
$\CF$  at unipotent  classes in  the cases  of Remark \ref{lem:assume};  then, any
unipotent  irreducible character $\chi$ is either in $\CF$ or else it is in
the principal series in the sense of character sheaves. In the latter case,
the values of $\chi$ on the unipotent set are given by Green functions, and
may be determined algorithmically.

Thus  a consequence of the results of  this section is the determination of
all  the  values  of  all  unipotent  irreducible  characters of $\GF$ at
unipotent classes for $\bG$ a group
of type $G_2$, $F_4$, $E_6$, $\lexp 2E_6$, $E_7$ and $E_8$.

\subsection{Families, Fourier transform and almost characters}
We assume henceforth that $\bG$ is quasi-simple (or equivalently that $W$ is irreducible).
We  give  the  basic  facts  concerning  the Fourier transform of unipotent
characters,  following \cite[\S 3, Ch.  VII]{DM1} and \cite[4.4]{DM2}. With
each  family $\CF$  of unipotent  irreducible characters  of $\GF$,
there  is associated a finite group  $\CG$. The unipotent characters in the
family  $\CF$ are  parameterised by  $$\CM(\CG):=\{(x,\chi)\mid x\in\CG, \;
\chi\in\Irr(C_{\CG}(x))\}/\CG,$$   where   the   action   of  $\CG$  is  by
simultaneous  conjugation.
Since the characters we consider are unipotent and we assume $W$ to be irreducible,
it follows that $F$
fixes each $F$-stable  family pointwise.  Consequently, despite the fact that in
general  Lusztig considers a more complicated  set than $\CM(\CG)$ using an
automorphism  of $\CG$  induced by  $F$, we  do not  have to deal with this
more general situation here.

To describe Lusztig's Fourier transform matrix we
shall also require a sign $\Delta_{(x,\chi)}$ defined by Lusztig;
we will be more explicit
about its value when needed. For the moment we note that
\begin{itemize}
\item If $\bG$ is split, $\Delta_{(x,\chi)}=1$ except
for the ``exceptional'' families in types
$E_7$ and $E_8$ (those containing unipotent characters attached to irrational
representations of the Hecke algebra).
\item If $\bG$ is nonsplit, $\Delta_{(x,\chi)}$ depends only on $\CF$
(and not the particular $(x,\chi)$ considered);
we will therefore denote it by $\Delta_\CF$.
\end{itemize}

We  write  $\rho_{(x,\chi)}$  for  the  unipotent character parameterised by
$(x,\chi)$. There are two other bases of the space $\CC_\CF$ spanned by the
$\rho_{(x,\chi)}$ which play an important role.

\begin{definition}\label{def:mellin}({\cf} \cite[4.4]{DM2}).
Let $\CM'(\CG):=\{(x,y)\in\CG\times\CG\mid xy=yx\}/\CG$;
note  that  $\CM(\CG)$  and  $\CM'(\CG)$ have  the  same cardinality.
\begin{enumerate}
\item  For a representative  $(x,y)$ of $\CM'(\CG)$  (see above) define the
{\bf Mellin transform}
$$
\mu_{(x,y)}:=\sum_{\chi\in\Irr(C_\CG(x))}\chi(y)\rho_{(x,\chi)}.
$$
\item We define another basis $\{R_{(x,\chi)}\}_{(x,\chi)\in\CM(\CG)}$ of
$\CC_\CF$, the {\bf almost characters}, by the property that
$\mu_{(x,y)}=\Delta_\CF \sum_{\chi\in\Irr(C_\CG(x))}\chi(x)R_{(y,\chi)}$,
except in exceptional families of $E_7$ and $E_8$ where we have
$\CG=\BZ/2$ and if $\epsilon$ is the non-trivial character of $\CG$
the above formula must be modified to read:
$\mu_{(x,y)}=\epsilon(x)
\sum_{\chi\in\Irr(C_\CG(x))}\ol\chi(x)R_{(y,\chi)}$ (this takes into account
Lusztig's $\Delta_{(x,\chi)}$ in this case).
\end{enumerate}
\end{definition}
Definition (ii) above follows Lusztig \cite[4.24.1 and 13.6]{livre}. Note that in loc.\ cit.\
\begin{itemize}
\item If $c$ is the family in the group $W$ corresponding to $\CF$ (see the
beginning  of  section  \ref{ss:wff}),  Lusztig  parameterises the
preferred extensions $\tilde E$ of characters of
$c$ by some elements $x_{\tilde E}\in\CM(\CG)$.
\item
When  $(x',\chi')=x_{\tilde E}$  then $R_{(x',\chi')}$ is  equal to the
function $R_{\tilde E}$ of subsection \ref{ss:princbl}.
\end{itemize}

Clearly  knowledge of the values of all the irreducible characters in $\CF$
on a given class $C$ is equivalent to knowledge of the values on $C$ of the
almost characters or of the Mellin transforms.

The following result is proved by Shoji (see \cite[Th. 5.7]{Shoji1} and
\cite[Th. 3.2 and Th. 4.1]{Shoji2}). In rough language, it says that the \lf s
are, up to multiplication by a scalar which is a root of unity, equal to the
almost characters of $\GF$.

\begin{proposition}\label{prop:shoji}
As above, let $\CF$ be a family with associated group $\CG$.
The  \lf s in  $\CF$  may, just as  the irreducible characters, be
labelled  by pairs  $(x,\phi)\in\CM(\CG)$. Write  $\chi_{(x,\phi)}$ for the
\lf\  corresponding to $(x,\phi)\in\CM(\CG)$.  Then for
$(\bG,F)$ with connected centre and $p$ sufficiently large the
following  is true. For each $(x,\phi)\in\CM(\CG)$, there
is an algebraic number $\zeta_F$ of absolute value $1$ such that
$\chi_{(x,\phi)}=\zeta_F R_{(x,\phi)}$.
\end{proposition}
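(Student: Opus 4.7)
The plan is to essentially assemble the ingredients from the work of Lusztig and Shoji, since the proposition is a restatement of their result adapted to the notation developed in the previous sections.

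First I would establish the labelling of the \lf s by pairs in $\CM(\CG)$. By Lusztig's classification (\cite[Chap.~17]{CS} and \cite{US}), the set of $F$-stable simple character sheaves in an $F$-stable family $\CF$ is in canonical bijection with $\CM(\CG)$; in the description of \S\ref{ss:cf} this arises because the irreducibles $E \in \Irr(\CA)$ fixed by $F^*\dot w_1^*$, together with the corresponding isomorphism class of a Fourier dual pair, produce precisely $|\CM(\CG)|$ characteristic functions, and these match (via the same indexing) the irreducible unipotent characters in $\CF$. I would identify the \lf\ $\chi_{(x,\phi)}$ via this bijection.

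Next I would invoke the main theorem of \cite{Shoji1,Shoji2} (in the generality stated, this is \cite[Th.~5.7]{Shoji1} and \cite[Th.~3.2, Th.~4.1]{Shoji2}): under the assumptions that the centre of $\bG$ is connected and $p$ is sufficiently large, the $F$-stable character sheaves are \emph{clean} and the so-called cleanness plus the determination of the action of Frobenius on their stalks shows that each $\chi_{E,\phi_E}$ is, up to a scalar, equal to the almost character $R_{(x,\phi)}$ indexed by the corresponding element of $\CM(\CG)$. The compatibility of the two bijections (character sheaves $\leftrightarrow \CM(\CG)$ and almost characters $\leftrightarrow \CM(\CG)$) is the content of \cite[Th.~3.2]{Shoji2}, which is proved via an induction on the dimension of $\bG$ that reduces to the cuspidal case, where Lusztig's determination of the cuspidal character sheaves together with his construction of cuspidal unipotent representations pins down the scalar.

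Finally I would verify that the scalar $\zeta_F$ has absolute value $1$: since both the almost characters and the \lf s (suitably normalised) form orthonormal bases of $\CC(\GF)$ and the matrix relating them is, by Shoji's theorem, a diagonal matrix, its diagonal entries must have absolute value $1$. The main obstacle in a fully self-contained proof would be the cleanness of cuspidal character sheaves, which is Lusztig's theorem \cite{Lu12} used to identify the cuspidal local system with its intersection cohomology extension; for our purposes this is an external input. A secondary point worth emphasising is that the scalar does depend on the choice of isomorphism $\phi_E$ implicit in the definition of $\chi_{E,\phi_E}$, so a canonical choice of $\phi_E$ (the one making $\tilde E$ the preferred extension, as in Lemma \ref{resuniuni}) must be fixed before the statement of the proposition is unambiguous. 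With that choice the scalar $\zeta_F$ is an algebraic number of absolute value $1$, but in general not a rational number; the remainder of \S\ref{s:fourier} is devoted to its explicit computation in the situations described in \eqref{assume}.
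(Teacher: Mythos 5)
Your proposal is correct and takes essentially the same route as the paper, which gives no independent proof of this proposition but simply attributes it to Shoji, citing \cite[Th.~5.7]{Shoji1} and \cite[Th.~3.2, Th.~4.1]{Shoji2}. Your additional remarks on the orthonormality argument for $|\zeta_F|=1$ and on the dependence of $\zeta_F$ on the choice of $\phi_E$ are consistent with the paper's own comments following the statement.
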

We have written $\zeta_F$ to emphasise the dependency on $F$; this number of
course {\it a priori} depends also on $(x,\phi)$. The condition ``sufficiently
large'' for $p$ above is ``almost good'', which means good for exceptional
groups, and no condition imposed for classical groups.

We shall now prove
\begin{theorem}\label{prop:projgamma-ac}
Maintain the assumption that is $\bG$ quasi-simple.
Suppose that we are in the setting of \ref{assume};
in particular $\CF$ is a family of unipotent characters of $\GF$ and $(u)$ is
its \wavefront. Then:
\begin{enumerate}
\item Any local system on $(u)$ is the (shifted) restriction of a
character sheaf lying in the unipotent Lusztig series.
\item For any $g\in A(u)$, we have $\Proj_\CF(\Gamma_{u_g})=\Delta_\CF
D(\mu_{(g,1)})$
except for the exceptional families of $E_7$ and $E_8$ where we have
$\Proj_\CF(\Gamma_{u_g})=\epsilon(g)D(\mu_{(g,1)})$.
\item If there is a pair $\ck\in\CI$ with support $(u)$ where $\CI$ is a
non-principal block with cuspidal datum $(\bL,\ci_\bL)$, the root of unity
$\zeta_F$ of Proposition \ref{prop:shoji} attached to $\ck$ is equal to
$\zeta_\CI(-1)^{\dim\supp\ci_\bL}\eta_\bL$.
\end{enumerate}
\end{theorem}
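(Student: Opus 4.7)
The plan is to establish (i) first, use it to legitimise a termwise application of Proposition \ref{prop:projunigamma} in the computation of (ii), and then deduce (iii) by comparing coefficients. For (i), by assumption \eqref{assume}(ii) at most one local system on $(u)$ lies outside the principal block. The principal-block pairs come from the cuspidal datum $(\bT,(1,\Qlbar),\Qlbar)$; since this has trivial underlying cuspidal and trivial Kummer system, Lemma \ref{lem:unip}(ii) produces unipotent character sheaves, and Lemma \ref{resuniuni} identifies their shifted restrictions with the corresponding $\wt\CX_\ck$'s. For the possibly exceptional local system, lying in some non-principal block $\CI$ with cuspidal datum $(\bL,\ci_\bL)$, I would verify in each of the cases listed in Remark \ref{lem:assume} (using the classification in the Appendix) that $\ci_{\bar\bL}$ has unipotent support and that its associated $\CT_0$ is trivial; Lemma \ref{lem:unip}(ii) applied with $\CS=\Qlbar$ then yields a unipotent character sheaf whose shifted restriction recovers the exceptional local system, so $\CI$ lies in the unipotent Lusztig series.

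For (ii), begin with the Mellin inversion formula \eqref{gammau}, rewritten via $\Gamma_\ci = a_\ci\zeta_{\CI(\ci)}^{-1}\wt\Gamma_\ci$, to express $\Gamma_{u_g}$ as a linear combination of the $\wt\Gamma_\ci$ with $\supp(\ci)=(u)$. Since $\CF$ is the unique unipotent family with wavefront $(u)$, the projection factors as $\Proj_\CF = \Proj_\uni\circ\Proj_{\wf((u))}$, and Proposition \ref{prop:projunigamma} converts each $\Proj_\CF(\wt\Gamma_\ci)$ into a scalar multiple of the \lf\ $\chi_{E_\ci,\phi_{E_\ci}}$ with $E_\ci=\vp_{\hat\ci}$; part (i) guarantees that every block in play lies in the unipotent Lusztig series, so no term is killed. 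Invoking Shoji's theorem (Proposition \ref{prop:shoji}) each such \lf\ equals a root of unity times an almost character $R_{(x,\chi)}$, where the principal-block pairs receive labels $(1,\chi)$ with $\chi\in\Irr(A(u))$ and any exceptional pair receives a label $(x_0,\chi_0)$ with $x_0\ne 1$. Applying Corollary \ref{D(chi_E)} to absorb $D$ into the labelling and then using the identity $\mu_{(g,1)} = \Delta_\CF\sum_\chi\chi(g)R_{(1,\chi)}$ (with the modified form in the exceptional families of $E_7$ and $E_8$) identifies the collected sum with $\Delta_\CF D(\mu_{(g,1)})$, or with $\epsilon(g)D(\mu_{(g,1)})$ in the exceptional case.

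For (iii), with (ii) established, equate the explicit expansion of $\Proj_\CF(\Gamma_{u_g})$ coming out of the proof of (ii) via Proposition \ref{prop:projunigamma} with its value $\Delta_\CF D(\mu_{(g,1)})$ rewritten using Shoji. The principal-block coefficients agree tautologically after fixing standard normalisations; isolating the unique exceptional term attached to $\ck\in\CI$ and comparing forces
\[\zeta_F = \zeta_\CI(-1)^{\dim\supp\ci_\bL}\eta_\bL,\]
the factor $\eta_\bL$ appearing via Corollary \ref{D(chi_E)} when commuting $D$ past the \lf, while $\zeta_\CI$ and $(-1)^{\dim\supp\ci_\bL}$ come respectively from the definition $\wt\Gamma_\ci = a_\ci^{-1}\zeta_\CI\Gamma_\ci$ and the shift appearing in Proposition \ref{prop:projunigamma}. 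The main obstacle is the book-keeping in (ii): pinning down the precise label $(x_0,\chi_0)\in\CM(\CG)$ that Lusztig's parameterisation assigns to the exceptional non-principal block pair, and matching all signs and roots of unity through Shoji's theorem, the $j$-induction description of the wavefront, and the nonstandard duality signs $\Delta_{(x,\chi)}$ operative in the exceptional families of $E_7$ and $E_8$.
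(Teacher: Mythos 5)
Your overall skeleton for (ii) --- Mellin inversion \eqref{gammau}, factor $\Proj_\CF=\Proj_\uni\circ\Proj_{\wf((u))}$, apply Proposition \ref{prop:projunigamma} termwise, convert \lf s to almost characters via Shoji --- matches the paper's, but there are two genuine gaps. First, your labelling of the exceptional pair is wrong: the pair $\ck=((u),\eta)$ in the non-principal block corresponds to a unipotent character sheaf with label $(1,\eta)$, \emph{not} to a label $(x_0,\chi_0)$ with $x_0\neq 1$. This is precisely the content of part (i), which the paper gets from Lusztig's restriction theorem \cite[Th.\ 2.4(b)]{RCS}: the character sheaves whose shifted restriction to $(u)$ is a local system are exactly those labelled $(1,\chi)$, $\chi\in\Irr(\CG)=\Irr(A(u))$. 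With a label $x_0\neq 1$ the identity $\mu_{(g,1)}=\Delta_\CF\sum_\chi\chi(g)R_{(1,\chi)}$ could not be matched against the sum over local systems on $(u)$, and your computation of (ii) would not close up. (Your alternative route to (i) via Lemma \ref{lem:unip} and the Appendix can show the relevant blocks lie in the unipotent series, but it does not deliver this labelling, which is what (ii) actually consumes; note also that Lemma \ref{lem:unip}(ii) is stated in the converse direction to the one you invoke --- the implication you need is Lemma \ref{lem:series}(i).)

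Second, and more seriously, your plan ``establish (ii), then deduce (iii) by comparing coefficients'' is circular in the non-principal case. When you ``invoke Shoji's theorem'' for the exceptional term you only know $\chi_{E,\phi_E}=\zeta_F R_{(1,\eta)}$ for an \emph{unknown} root of unity $\zeta_F$; you cannot ``identify the collected sum with $\Delta_\CF D(\mu_{(g,1)})$'' without already knowing $\zeta_F$, which is statement (iii). The paper's resolution is the key missing idea: set $\zeta=\zeta_\CI\inv\zeta_F(-1)^{\dim\supp\ci_\bL}\eta_\bL$ and obtain
$\Proj_\CF(\Gamma_{u_g})=\Delta_\CF D(\mu_{(g,1)})+(\zeta-1)\eta(g\inv)D(R_{(1,\eta)})$;
then argue that the left-hand side and $\mu_{(g,1)}$ are proper characters while $R_{(1,\eta)}$ has rational coefficients, forcing $\zeta=\pm1$; finally exclude $\zeta=-1$ by explicit properties of the Fourier transform matrix (non-vanishing of the coefficients of $\rho_{(y,\chi)}$ in $R_{(1,\eta)}$ for $y$ a $3$-cycle when $A(u)\in\{\Sym_3,\Sym_4,\Sym_5\}$, and denominators $2^{k-1}$ when $A(u)\simeq(\BZ/2\BZ)^k$, $k\geq 2$). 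This positivity/rationality argument is what simultaneously proves (ii) and (iii); without it neither follows. You also omit the input needed in the principal-block case, namely $\chi_{E,\phi_E}=(-1)^{\rank\bG}R_{\wt E}$ from \cite[2.18]{Shoji1} and the identification of labels via \cite[Cor.~0.5]{FS}, which is why the principal-block coefficients ``agree'' --- it is not tautological.
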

\begin{proof}
In the bijection of \cite[Th.\ 2.4 (b)]{RCS} between unipotent
character sheaves of $\CF$ and $\CM(\CG)$, the shifted restriction to the
class $(u)$ of the character sheaf with label $(1,\chi)$ is
the local system corresponding to $\chi$,
so that all local systems on $(u)$ appear in this bijection, whence (i).

Now consider  the case  when all  local systems  on $(u)$  are in the
principal block. Then by \cite[2.18]{Shoji1}, for all $F$-invariant
$E\in \Irr(W)$
we have $\chi_{E,\phi_E}=(-1)^{\rank\bG}R_{\wt E}$ when $\sigma_E$ in
subsection \ref{ss:cf} has
been chosen such that it defines the extension $\tilde E$,
in particular $\zeta_F=(-1)^{\rank\bG}$.
By Proposition \ref{prop:projunigamma}, and the definition of $\hat\ck$ above
Lemma \ref{lem:wf-cs}, if $E=\vp_{\ck}$
we have
$\Proj_\CF(\Gamma_\ck)=a_\ck\ve_\ck R_{\wt{E\otimes\ve}}$
(here we use the fact that
$\zeta_\CI=1$ for the principal block and that  $\dim\supp\ci_\bL=\rank\bG$
in Proposition \ref{prop:projunigamma}).
This can be written as $\Proj_\CF(\Gamma_\ck)=D(a_\ck R_{\wt E})$
since the Alvis-Curtis dual of $R_{\wt E}$ is
$R_{\wt E\otimes\wt\ve}=\ve_\ck R_{\wt{E\otimes\ve}}$ (see \cite[6.8.6]{livre}).
By \cite[Cor.~0.5]{FS}, if $\kappa=((u),\chi)$ then
$\wt E$ is parameterised by $(1,\chi)\in\CM(\CG)$, hence we have
$\Proj_\CF(\Gamma_\ck)=D(a_\ck R_{(1,\chi)})$.
By \ref{gammau} we have
$$
\Gamma_{u_g}=a_u\inv\sum_{\psi\in\Irr(A(u))}\ol{\psi(g)}\Gamma_{((u),\psi)}.
$$
Applying $\Proj_\CF$ to both sides of this relation, and using the above
we obtain
\begin{equation}\label{proj Gamma}
\Proj_\CF(\Gamma_{u_g})=D(\sum_{\psi\in\Irr(A(u))}
\ol{\psi(g)}R_{(1,\psi)}).
\end{equation}

By \ref{def:mellin} (ii), the right side of \eqref{proj Gamma} is equal to
$\Delta_\CF D(\mu_{(g,1)})$
except in the exceptional families of $E_7$ and $E_8$ where it is
$\epsilon(g)D(\mu_{(g,1)})$.

In  the case where  there is a  pair $\ck=((u),\eta)$ not  in the principal
block  but  in  a  block  $\CI$  with  cuspidal  data  $(\bL,\ci_\bL)$ then
Proposition \ref{prop:projunigamma}, using Corollary \ref{D(chi_E)} becomes
$\Proj_\CF(\Gamma_\ck)=a_\ck\zeta_\CI\inv(-1)^{\dim\supp\ci_\bL}   \eta_\bL
D(\chi_{E,\phi_E})$  where $A_E$ is determined  by $E=\varphi_\ck$. As explained
in  the  beginning  of  the  proof,  the  character  sheaf  $A_E$ has label
$(1,\eta)$. Now applying Proposition \ref{prop:shoji} we get
$\Proj_\CF(\Gamma_\ck)=a_\ck\zeta_\CI\inv(-1)^{\dim\supp\ci_\bL}   \eta_\bL
\zeta_F D(R_{(1,\eta)})$, and writing
$\zeta=\zeta_\CI\inv\zeta_F(-1)^{\dim\supp\ci_\bL}\eta_\bL$, we obtain
$$
\begin{aligned}
\Proj_\CF(\Gamma_{u_g})&=D(\sum_{\overset{\psi\in\Irr(A(u))}{\psi\neq\eta}}
\ol{\psi(g)}R_{(1,\psi)})+\zeta\ol{\eta(g)}
D(R_{(1,\eta)})\\
&=D(\sum_{\psi\in\Irr(A(u))}{\psi(g\inv)}R_{(1,\psi)})
+(\zeta-1)\eta(g\inv)D(R_{(1,\eta)})\\
&=\Delta_\CF D(\mu_{(g,1)})+(\zeta-1)\eta(g\inv)D(R_{(1,\eta)}),\\
\end{aligned}
$$
where $\mu$ is the Mellin transform as explained in the case where all local
systems were in the principal block (here we use the fact that we are not in an
exceptional family).

But  the  left  hand  side  is  a  proper  character  of $\GF$, and since
$\mu_{(g,1)}$  is a proper character and $R_{(1,\eta)}$ has rational
coefficients in the basis of  characters, it is  immediate that $\zeta=\pm
1$.  Case by  case inspection  shows that  if there  is a  single pair with
support $(u)$ not in the principal block, then
$A(u)\in\{\Sym_3,\Sym_4,\Sym_5\}$  or $A(u)\simeq (\BZ/2\BZ)^k$ with $k\geq
2$ (that is, since we assume $\bG$ adjoint, $A(u)\not\simeq\BZ/2\BZ$);
indeed, for adjoint exceptional groups, both local systems are in the
principal block when $\CG=A(u)=\BZ/2\BZ$; for adjoint classical groups, if
$\CG=A(u)=\BZ/2\BZ$, one checks that the only  pair $(x,\chi)\in\CM(\CG)$
which parameterises a local system which is not in the principal block has
$x\neq 1$ hence by \cite[Th.\ 2.4 (b)]{RCS}
does not correspond to a local system on $(u)$.

In the  non-abelian case $A(u)\in\{\Sym_3,\Sym_4,\Sym_5\}$,
the formula  in \cite{DM1}  for the Fourier
transform    shows   that   the   coefficient   of   $\rho_{(y,\chi)}$   in
$R_{(1,\eta)}$  is non-zero for any $\chi$ if $y$ is a $3$-cycle. If
$\zeta=-1$,  not all of  the negative terms  can be cancelled  in the right
side  of  the  above  equation.

In  the  abelian  case $A(u)=(\BZ/2\BZ)^k$ with $k>1$,  if  $\zeta=-1$ the
coefficient of $\rho_{(y,\chi)}$ in $R_{(1,\eta)}$ has a denominator
$2^{k-1}$ with $k$ as above. Hence in both cases we conclude that $\zeta=1$
and $\Proj_\CF(\Gamma_{u_g})=D(\mu_{(g,1)})$.
\end{proof}
The statement (ii) of Theorem \ref{prop:projgamma-ac}  proves implicitly
that $\Delta_\CF D(\mu_(g,1))$ (resp.\ $\epsilon(g)D(\mu(g,1))$) is an actual
character. We remark that this is also a consequence of the result
\cite[6.20]{livre} of Lusztig.

\subsection{Values of unipotent characters at unipotent elements, and concluding remarks}
In the three groups covered by Remark \ref{lem:assume}, as well as in groups
of type $E_6$, $\lexp 2E_6$  and $E_7$, all the
families  of unipotent characters other than  one family $\CF$ satisfying the
assumptions of Theorem \ref{prop:projgamma-ac}
are in  the principal  series, i.e.  are of  the form
$(\Qlbar,c)$.  Hence their values at unipotent  elements are given by Green
functions, which may be taken as known by Lusztig's algorithm.

Thus  to determine all values of  all unipotent characters in $\Irr(\GF)$
at  unipotent  elements,  it  suffices  to  determine  the  values  of  the
characters  in  $\CF$.  But  by  Proposition  \ref{prop:shoji}, for this it
suffices  to determine the root  of unity $\zeta_F$, since  the values of the
\lf  s $\chi_{(x,\phi)}$  ($(x,\phi)\in\CM(\CF)$) are  known. Hence  in these
cases,  Theorem \ref{prop:projgamma-ac}(ii) completes the determination
of the values of unipotent characters at unipotent classes. The constant
$\zeta_F$ was already determined by Lusztig \cite[8.6]{CV} and implicitely
by Kawanaka \cite[4.2.2]{K2}, using different methods which each yield
different restrictions on the applicable values of $p$ and $q$.

We  remark finally  that empirically,  the values  of the Mellin transforms
$\mu_{(x,y)}$ at unipotent classes appear to be ``simpler'' than the values
of  the irreducible characters, when expressed  as polynomials in $q$. This
remark is based on computations in the exceptional groups, and currently we
have no theoretical justification for it.

\appendix

\section{Classification of cuspidal character sheaves}\label{s:classcs}

Let  $\bG$ be a  connected reductive algebraic  group over an algebraically
closed field of characteristic $p$ (we allow $p=0$).

We  summarise  here  the  classification  of  cuspidal character sheaves on
$\bG$,  which is essentially due to  Lusztig (see \cite{CS}). In particular
we  give a list,  conveniently arranged, of  cuspidal character sheaves for
each isogeny type of quasi-simple group.

As explained in \ref{ss:cs}, a cuspidal character sheaf $A$ is the perverse
extension  of an  irreducible cuspidal  local system  whose support  is the
inverse  image in  $\bG$ of  a conjugacy  class of $\bG/Z^\circ_\bG$, where
$Z_\bG$  is the centre  of $\bG$. If  $x$ is in  the support of $A$ then by
\cite[2.8]{ICC}   the  group  $C_\bG^0(x)/Z^\circ_\bG$   is  unipotent;  in
particular  if $x_sx_u$ is the Jordan  decomposition of $x$ the semi-simple
part   $x_s$  is  isolated.   The  class  of   $x_u$  is  distinguished  in
$C_\bG^0(x_s)$ and $Z_\bG/Z^\circ_\bG$ injects into
$A_\bG(x)=C_\bG(x)/C_\bG^0(x)$.  By  the  cleanness  of  cuspidal character
sheaves (see \cite{Lu12}) $A$ vanishes outside the support of $\CE$.

As  explained in \ref{ss:wff},  a cuspidal character  sheaf has a ``Lusztig
series'',  parameterised by the conjugacy class of some semi-simple element
$s$ of the Langlands dual group $\bG^*$ to $\bG$; the sheaf further defines
a  label  inside  a  family  attached  to the group $W'(s)=W_{\bG^*}(s)$.
Specifically,  the  group  $W'(s)$  is  of the form $W(s)\rtimes\Omega$ where
$W(s)=W(C_{\bG^*}^0(s))$  and where $\Omega\simeq A_{\bG^*}(s)$; a family of
$W'(s)$ with a label attached to a cuspidal character sheaf is determined by
an  $\Omega$-stable family of $W(s)$; if  the small finite group giving rise
to  the labels of this  last family is $\CG$,  the group giving rise to the
labels of the family of $W'(s)$ is $\CG\rtimes\Omega$ (see
\cite[17.1--17.8]{CS}).  There is at most one family in a given group $W(s)$
arising  as above  (\cite[Chapter 8]{livre}).  The labels  for the cuspidal
character   sheaves  in  the  family  are  pairs  $(x,\chi)$  taken  up  to
$\CG\rtimes\Omega$-conjugacy    where    $x\in\CG\rtimes\Omega$   and
$\chi\in\Irr(C_{\CG\rtimes\Omega}(x))$. We will call cuspidal labels the
labels which can be labels for cuspidal character sheaves.

To  each  irreducible  local  system  on  the  class of $x$ is associated a
central  character, coming from the  action of  $Z_\bG$ on  fibres;
it is also the restriction to $Z_\bG$ of the character of $A_\bG(su)$
associated to the local system.

We will use the following facts

\begin{itemize}
\item  The cuspidal character sheaves on a direct product of groups are the
external  tensor product  of cuspidal  character sheaves  on each component
(\cite[17.11]{CS}).
\item  The cuspidal character  sheaves on $\bG$  are obtained from those on
$\bG/Z_\bG^\circ$ by inverse image and tensoring by a local system $\CS$ of
rank  1 on  $\bG$ which  is the  inverse image  of a  Kummer system  on the
abelianisation  $\bG/[\bG,\bG]$. The effect on the Lusztig series of tensoring
by  $\CS$ is to multiply the semi-simple  element of $\bG^*$ by the central
element corresponding to $\CS$ (see Lemma \ref{lem:series} or \cite[17.9, 17.10]{CS}).
\item  Let $\tilde\bG\xrightarrow\pi\bG$  be a  surjective morphism  with a
finite  central kernel;  then, given  an irreducible  cuspidal local system
$\CS$  on $\tilde\bG$,  the direct  image $\pi_*\CE$  is a  local system if
$\CE$  is $\ker\pi$-invariant (equivalently  its central character vanishes
on  $\ker\pi$);  in  this  latter  case,  the  components of $\pi_*\CE$ are
irreducible  cuspidal local systems and all cuspidal local systems on $\bG$
are  obtained this way \cite[2.10]{ICC}. If $x$ is in the support of $\CE$,
the  direct image $\pi_*$ corresponds to the induction through the natural
morphism $A_{\tilde\bG}(x)\to A_\bG(\pi(x))$. If
$\tilde\bG^*\xleftarrow{\check\pi}  \bG^*$ is the dual map, and $\CE'$ is a
component  of $\pi_*  \CE$ with  Lusztig series  $s$, then the Lusztig
series of $\CE$ is $\check\pi(s)$ \cite[17.16]{CS}.
\end{itemize}

The  above  facts  in  principle   reduce the classification to the case of
quasi-simple  and simply connected groups. But in practice the list is much
easier  to use if  the classification is given  for each isogeny type  of quasi-simple group,
and this is what we will do.

We assume now $\bG$ quasi-simple.  We use the additional facts
\begin{itemize}
\item  The  central  character  associated  to  a  cuspidal character sheaf
determines the part ``element of $\Omega$'' of its label \cite[23.0]{CS}.
\item For $z\in Z_\bG$, the associated translation operator $t_z$ preserves
the  Lusztig series \cite[17.17.2]{CS} and acts  on the part ``character of
$\Omega$''  of the label as follows: if  the label of $A$ is $(x,\chi)$ and
the  central character of $A$ is  faithful, and $\sigma_z$ is the character
of   $\Omega$  determined  by   $z$,  then  the   label  of  $t^*_z  A$  is
$(x,\chi\sigma_z)$ \cite[23.1 (c)]{CS}.
\end{itemize}

To describe cuspidal character sheaves whose support contains $x$,
we  give in order a description of $x_s$, of $x_u$ and of $A_\bG(x)$ (often
it  is sufficient for  this to describe  $C_\bG(x_s)$, $A_\bG(x_s)$ and the
class  of  $x_u$  in  $C_\bG(x_s)^0$),  and finally describe the associated
character of $A_\bG(x)$.
Note  that the class of  $x_u$ is determined by  the class of $x_s$ and the
fact  that the class of $x$ supports a cuspidal local system if and only if
the  class  of  $x_u$  in  $C_G(x_s)^0$ supports a
cuspidal local system (see \cite[2.10]{ICC}).

We also need the following
\begin{lemma}\label{G tilde to G}
Let  $\pi:\tilde \bG\rightarrow  \bG$ be  surjective morphism  with central
kernel;  let $\tilde  x\in \tilde\bG$  and let  $x=\pi(\tilde x)$; there is
then an exact sequence $$1\rightarrow\ker\pi/(\ker\pi\cap
C_{\tilde\bG}(\tilde        x)^\circ)\rightarrow       A_{\tilde\bG}(\tilde
x)\xrightarrow{\overline\pi}A_\bG(x)\xrightarrow\eta\ker\pi,$$        where
$\overline\pi$  is induced by $\pi$ and where, if $g\in C_\bG(x)$ is both a
representative  of  $\overline  g\in  A_\bG(x)$  and  the  image of $\tilde
g\in\tilde\bG$,  we define $\eta(\overline  g)=z$ where $z$  is the element
defined by $\lexp{\tilde g}{\tilde x}=\tilde xz$. The map $\overline\pi$ is
surjective when $x$ is unipotent.

If  in addition the $\bG$- conjugacy class  of $x$ affords a cuspidal local
system then $(\ker\pi\cap C_{\tilde\bG}(\tilde        x)^\circ)=1$,
\end{lemma}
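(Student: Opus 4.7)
The plan is to establish the exact sequence by constructing the connecting homomorphism $\eta$, then treat the surjectivity statement when $x$ is unipotent, and finally deduce the cuspidal refinement from the structural description of cuspidally-supported classes recalled at the start of the appendix.

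To define $\eta$, I would choose arbitrary lifts $\tilde g, \tilde x$ of $g \in C_\bG(x)$ and $x$, and set $\eta(g) := \tilde g \tilde x \tilde g^{-1} \tilde x^{-1}$. This element lies in $\ker\pi$ because $\pi$ sends it to $gxg^{-1}x^{-1} = 1$; centrality of $\ker\pi$ shows the definition is independent of the chosen lifts, makes $\eta$ a homomorphism, and ensures that $\eta(g)$ commutes with $\tilde x$. To see that $\eta$ factors through $A_\bG(x)$, one checks that $\pi$ restricts to a surjection $C_{\tilde\bG}(\tilde x)^\circ \twoheadrightarrow C_\bG(x)^\circ$ (the image is a connected subgroup of $C_\bG(x)$ of the correct dimension, since $\ker\pi$ is finite), so every $c \in C_\bG(x)^\circ$ lifts to a centraliser element in $\tilde\bG$. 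Exactness of the sequence then follows formally: the first map is injective by construction, exactness at $A_{\tilde\bG}(\tilde x)$ reduces to the same surjection, and exactness at $A_\bG(x)$ is immediate from the observation that $\eta(\overline g) = 1$ precisely when some lift of $g$ centralises $\tilde x$.

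For the surjectivity of $\overline\pi$ when $x$ is unipotent, I would use Jordan decomposition to refine the choice of $\tilde x$. Given an arbitrary lift $\tilde x = \tilde x_s \tilde x_u$, applying $\pi$ and using $x_s = 1$ forces $\tilde x_s \in \ker\pi$; replacing $\tilde x$ by $\tilde x_u$ produces a unipotent lift of $x$. For this $\tilde x$ and any $g \in C_\bG(x)$, the element $\tilde g \tilde x \tilde g^{-1} = \tilde x \cdot \eta(g)$ is a conjugate of a unipotent element, hence unipotent; since $\eta(g)$ is central, matching Jordan decompositions on both sides forces $\eta(g)$ to be unipotent. Under the paper's standing assumptions on the characteristic, $\ker\pi$ contains no non-trivial unipotent elements, so $\eta \equiv 1$ and $\overline\pi$ is surjective.

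For the cuspidal assertion, I would invoke the fact recalled at the start of the appendix, via \cite[2.8]{ICC}, that if the class of $x$ affords a cuspidal local system then $C_\bG(x)^\circ / Z_\bG^\circ$ is unipotent. Using $\pi(Z_{\tilde\bG}^\circ) = Z_\bG^\circ$ together with the finite-kernel surjection $C_{\tilde\bG}(\tilde x)^\circ \twoheadrightarrow C_\bG(x)^\circ$, the analogous property transfers upstairs: $C_{\tilde\bG}(\tilde x)^\circ / Z_{\tilde\bG}^\circ$ is unipotent, so $Z_{\tilde\bG}^\circ$ is the unique maximal torus of $C_{\tilde\bG}(\tilde x)^\circ$. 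The finite group $\ker\pi \cap C_{\tilde\bG}(\tilde x)^\circ$ consists of semisimple central elements, hence lies in this unique maximal torus, so in $Z_{\tilde\bG}^\circ \cap \ker\pi$, which is trivial under the standing assumption that $\tilde\bG$ is quasi-simple. The main obstacle I anticipate is precisely this transfer of the ``unipotent modulo connected centre'' property from $\bG$ to $\tilde\bG$, which requires a careful dimension count together with the observation that a connected central extension of a unipotent group by a finite group of multiplicative type has no semisimple elements outside its centre.
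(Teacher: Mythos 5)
Your proof is correct and follows essentially the same route as the paper's: the commutator-valued map $\eta$, exactness via $\pi(C_{\tilde\bG}(\tilde x)^\circ)=C_\bG(x)^\circ$, Jordan decomposition for the unipotent case, and the unipotence (modulo the connected centre) of the centraliser from \cite[2.8]{ICC} for the cuspidal case. The only cosmetic differences are that the paper obtains the last point by applying \cite[2.8]{ICC} directly to $\tilde\bG$ (using that cuspidal systems on $\bG$ are pushed forward from $\tilde\bG$ by \cite[2.10]{ICC}) rather than transferring the property through the isogeny of centralisers, and that your appeal to characteristic hypotheses in the surjectivity step is unnecessary, since elements of the central $\ker\pi$ are automatically semisimple.
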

\begin{proof}
We  first  observe  that  the  formula  for  $\eta(\overline  g)$  gives  a
well-defined element $z\in \ker\pi$, whence a morphism
$C_\bG(x)\rightarrow\ker\pi$    compatible    with    the    quotient    by
$C_\bG(x)^\circ$   and  whose  kernel   is  by  definition   the  image  of
$C_{\tilde\bG}(\tilde  x)$, whence an exact sequence $$A_{\tilde\bG}(\tilde
x)\xrightarrow{\overline\pi}A_\bG(x)\xrightarrow\eta\ker\pi.$$           As
$\pi(C_{\tilde\bG}(\tilde    x)^\circ)=C_\bG(x)^\circ$,   the   kernel   of
$\overline\pi$ is $\ker\pi/\ker\pi\cap C_{\tilde\bG}(\tilde x)^\circ$.
If $x$ is unipotent, taking the semi-simple parts on both sides of the
equality $\lexp{\tilde g}{\tilde x}=\tilde xz$ we find $z=1$.

When  the class  of $\tilde  x$ affords  a cuspidal  local system the group
$C_{\tilde\bG}(\tilde x)^\circ$    is   unipotent   which   implies   that
$\ker\pi\cap C_{\tilde\bG}(\tilde x)^\circ=1$, whence the result.
\end{proof}

We  use Lemma \ref{G tilde to G} as  follows: given a local system $\CE$ on
the  class of  $\tilde x$  lifting a  local system  on the class of $x$, or
equivalently   a  representation  of  $A_{\tilde\bG}(\tilde  x)$  factoring
through  a representation $\rho$ of $\overline\pi(A_{\tilde\bG}(\tilde x))$,
then $\pi_*\CE$ corresponds to the induced of $\rho$ to $A_\bG(x)$. We will
use two special cases: if the image of $\eta$ is trivial, then $\pi_*(\CE)$
is  irreducible; if the  image of $\eta$  is of prime  cardinality $r$ then
either   $\pi_*\CE$  is  irreducible  or  has  $r$  irreducible  components
(depending whether $\rho$ is invariant or not by $A_\bG(x)$).


\section{Classical groups}\label{s:classcl}
In  a classical Weyl group, the groups $\CG$ attached to families are of
the  form $(\BZ/2\BZ)^n$, and the labels in  a family are in bijection with
Lusztig's  ``symbols'' (pairs  of increasing  sequences of natural integers
taken  up to shift, see \cite[Chapter  4]{livre}); we will call cuspidal a
symbol corresponding to a cuspidal label.

A  useful preliminary is  the description of  cuspidal symbols for $W(B_n)$
and  $W(D_n)$. For  $W(B_n)$ where  $n=d^2+d$ the  only cuspidal  symbol is
$(\{0,1,\ldots,2d\},\emptyset)$;    It    corresponds    to    the    label
$(x,\chi)=((-1,1,-1,1,\ldots),(-1,-1,-1,-1,\ldots))$ in $\CM(\CG)$ where
$\CG=(\BZ/2\BZ)^d$.  For  $W(D_n)$  with  $n=(d+1)^2$  the only cuspidal
symbol is $(\{0,1,\ldots,2d+1\},\emptyset)$; $\CG$ and $(x,\chi)$ are as
above.

\subsection*{Type $A_{n-1}$}
The  only group of type $A_{n-1}$  which affords cuspidal character sheaves
is  $\bG=\SL_n$ when  $p$ does  not divide  $n$. There  are $n\phi(n)$ such
sheaves,  parameterised  by  the  pairs  $(z,\chi)$  where $z\in Z_\bG$ and
$\chi$  is  an  injective  character  of $\chi\in\Irr(Z_\bG)$. The cuspidal
local  system associated  to $(z,\chi)$  has support  $zC$ where $C$ is the
regular  unipotent class, and $\chi$ can  be identified to the character of
$A_\bG(x)=Z_\bG$  where  $x\in  zC$  associated  to the local system. These
character sheaves are all in the Lusztig series defined by a quasi-isolated
$s\in\bG^*$  such  that  $W'(s)$  contains  a  Coxeter  element of
$W(\bG^*)$;  we have  $W(s)=1$ and  $\Omega=W'(s)$ is  in bijection
with $\Irr(Z_\bG)$. Then $z\in Z_\bG$ corresponds to $\zeta\in\Irr(\Omega)$
and  $\chi$ to  a generator  $x\in\Omega$, and  $(x,\zeta)$ is the label in
$\CM(\Omega)$ of the considered sheaf (see \cite[18.5]{CS}).

\subsection*{Types $B_n$ and $C_n$ for $p=2$}
The  description is  the same  in both  cases; there are cuspidal character
sheaves  only when $n/2$ is a triangular number, in which case there is one
\cite[22.2]{CS}. Its support is the class of a unipotent $x_u$ of Jordan type
(in   $\Sp_{2n}$,  see\cite[2.7]{LS})  given   by  $(4,8,12,\ldots)$;  this
partition  has $d$ parts where  $n=d(d+1)$. We have $A_\bG(x_u)=(\BZ/2\BZ)^d$
and the local system corresponds to the character $(-1,1,-1,1,\ldots)$. The
symbol  (in the sense  of \cite[2.7 and  1.2e]{LS}) of the  local system is
$(\{0,4,\ldots,4d\},\emptyset)$   when   $d$   is   odd   and  $(\emptyset,
\{2,\ldots,2+4(d-1)\})$  when $d$ is  even. The Lusztig  series is $\Qlbar$
(the  unipotent series)  and the  label is  the cuspidal symbol of $W(B_n)$
(see. \cite[22.4, 22.6]{CS}).

\subsection*{Type $C_n$ for $p\ne 2$}
It  is useful to  first describe the  cuspidal local systems with unipotent
support.  For $C_n^\ad$  there exists  such a  system when  $n$ is  an even
triangular  number. For $C_n^\sco$ there  is also such a  system for $n$ an
odd  triangular number, which  has a nontrivial  central character. In each
case  the support is described by the Jordan type $(2,4,6,\ldots,2d)$ where
$n=d(d+1)/2$  and $A_\bG(x_u)=(\BZ/2\BZ)^{d-1}$; the local system corresponds
to  the character $(-1,1,-1,1,\ldots)$ of $A_\bG(x_u)$  and the symbol of the
local  system  is  $(\{0,2,4,\ldots,2d\},\emptyset)$  if  $d$  is  even and
$(\emptyset,   \{1,3,5,\ldots,2d+1\})$   otherwise   (see   \cite[11.5  and
12.4]{ICC}).

\subsubsection*{Case $C_n^\ad=\PSp_{2n}$}
$\PSp_{2n}$  affords cuspidal local systems if and  only if $n$ is even and
of  the form $n=N_1+N_2$  where $N_1$ and  $N_2$ are triangular numbers (of
the  same parity).  If $N_1\ne  N_2$ there  exists a  unique cuspidal local
system with support the class of $x$ where $C_\bG(x_s)\simeq
\Sp_{2N_1}\times^{\BZ/2\BZ}\Sp_{2N_2}$. If $N_1=N_2$ there are two cuspidal
local systems on the class of $x$ where
$C_\bG(x_s)\simeq(\Sp_{2N_1}\times^{\BZ/2\BZ}\Sp_{2N_1})\rtimes    \BZ/2\BZ$;
the  non-trivial element of $A_\bG(x_s)$  exchanges the two $\Sp$ components,
and  the two local  systems correspond to  the two characters of $A_\bG(x_s)$
(see  \cite[23.2(a)]{CS}; by \cite[2.10]{ICC}  the local system corresponds
to  a  unipotently  supported  cuspidal  local  system  on $C_\bG(x_s)^0$, of
central  character $\Id\boxtimes\Id$ when $N_1$ and  $N_2$ are even, and of
central  character $\ve\boxtimes\ve$  when $N_1$  and $N_2$
are  odd). If $N_1=t(t+1)/2$ and $N_2=r(r+1)/2$  with $t,r\geq 0$, then the
Lusztig    series   is   given   by   $s\in\Spin_{2n+1}$   such   that
$C_{\bG^*}(s)\simeq  \Spin_{2A}\times^{\BZ/2\BZ}\Spin_{4B+1}$ where if
$t\not\equiv  r\pmod 2$  we have  $4A=(t+r+1)^2$ and  $8B+1=(t-r)^2$ and if
$t\equiv r\pmod 2$ we have $4A=(t-r)^2$ and $8B=(t+r+2)(t+r)$ (see \cite[p.
976]{remarks}  and \cite[23.16]{CS}). The above  describes a unique element
$s$  if $t\ne  r$ and  two (central)  elements if  $t=r$. The label is
given  by  the  cuspidal  symbol  of the group $W(s)\simeq W(D_A)\times
W(B_{2B})$ (with the convention that $W(D_1)$ is the trivial group).

\subsubsection*{Case $C_n^\sco=\Sp_{2n}$}
Again  we  must  have  $n=N_1+N_2$  where  $N_1$  and  $N_2$ are triangular
numbers.  If $n$  is even  and $N_1\ne  N_2$ there  are two  local cuspidal
systems, inverse images of the one in $C_n^\ad$, where the semi-simple part
of  the support is respectively $x_s$ and  $x_sz$, where $z$ is the non-trivial
element  of $Z\bG$. If $N_1=N_2$ the element  $x_s$ is conjugate to $x_sz$, and
$C_\bG(x_s)$  is  connected,  which  leads  to  a  single  local  system.  If
$N_1\not\equiv N_2 \pmod 2$ there are also two cuspidal local systems, with
a nontrivial central character (see \cite[23.2 (b)]{CS}). In every case one
can  index the cuspidal local systems by ordered pairs $(N_1,N_2)$; we have
$C_\bG(x_s)\simeq\Sp_{2N_1}\times\Sp_{2N_2}$. If $t,r,A,B$ are defined by the
same  formula as in the previous subsection, the Lusztig series is given by
an element $s$ such that
$C_{\bG^*}(s)\simeq\Orth_{2A}\times\SO_{4B+1}$.  The  group  $\Omega$  is
thus  $\BZ/2\BZ$ excepted if $N_1=N_2$ (the element of $\Omega$ part of the
label  corresponds to the semisimple part $x_s$ or $x_sz$ of the support of the
local  system). The  part ``in  $W(s)$'' of  the label  is the cuspidal
symbol of the group $W(s)\simeq W(D_A)\times W(B_{2B})$.

\subsection*{Type $B_n$ for $p\ne 2$}
It  is useful to  first describe the  cuspidal local systems with unipotent
support.  For $B_n^\ad$ there  is a (unique)  such system when  $2n+1$ is a
square  (see  \cite[13.4]{ICC}).  For  $B_n^\sco$,  there  is an additional
system  for $2n+1$ a triangular number, with a nontrivial central character
\cite[14.6]{ICC}.  In  the  first  case  the  Jordan type of the support is
$(1,3,5,\ldots)$  and if $d$ is the number  of parts of this partition then
$A_{\SO}(x_u)=(\BZ/2\BZ)^{d-1}$    \cite[10.6]{ICC};    the    local   system
corresponds  to  the  character  $(-1,1,-1,\ldots)$  of this group. We have
$n=d^2$  and the  symbol (in  the sense  of \cite[13.4]{ICC})  of the local
system  is  $(\{0,2,4,\ldots,2d-2\},\emptyset)$.  In  the  second  case the
Jordan  type of  the support  is $(1,5,9,\ldots)$  or $(3,7,11,\ldots)$. We
have $A_\bG(x_u)=(\BZ/2\BZ)^d$ where $d$ is the number of parts of the Jordan
type (see \cite[10.6]{ICC} and Lemma \ref{G tilde to G}).

\subsubsection*{Case $B_n^\ad=\SO_{2n+1}$}
We  must have $2n+1=N_1+N_2$ where $N_1$ is an even square and $N_2$ an odd
square.  There is  one cuspidal  local system,  unipotently supported, when
$N_1=0$, and two cuspidal local systems if $N_1\ne 0$ (see
\cite[23.2(c)]{CS});   in   this   last   case   we   have  $C_\bG(x_s)\simeq
\Orth_{N_1}\times  \SO_{N_2}$. Let  $N_1=r^2$ and  $N_2=t^2$; then the Lusztig
series is defined by $s$ of centraliser
$\Sp_{((r+t)^2-1)/2}\times\Sp_{((r-t)^2-1)/2}$.

\subsubsection*{Case $B_n^\sco=\Spin_{2n+1}$}
Concerning systems coming from $B_n^\ad$ (with a trivial central character)
if  $N_1$ and $N_2$ as  above are distinct and  both nonzero, the preimages
$x_s$  and $x_sz$ are conjugate  and we have a  single local system. If $N_1=0$
there  are two systems, one with unipotent support, the other translated by
the  non-trivial  element  of  the  centre.  When  the central character is
nontrivial  (see \cite[23.2  (e)]{CS}), we  must have  $2n+1=N_1+N_2$ where
$N_1$  is an  even triangular  numbers and  $N_2$ an odd triangular number.
Each  such pair gives rise  to two cuspidal complexes.  When $N_1\ne 0$ and
$N_2\ne  1$,  they  share  the  same  support,  the  class  of  $x$  where
$C_\bG(x_s)\simeq\Spin_{N_1}\times^{\BZ/2\BZ}\Spin_{N_2}$  ($x_s$ and  $x_sz$ are
conjugate);  $\Spin_{N_1}$ has two  central characters which  restrict to a
nontrivial character of $Z_\bG$, corresponding to the two local systems. If
$N_2=1$  we have $C_\bG(x_s)\simeq\Spin_{N_1}$ and if  $N_1=0$ one of them is
unipotently  supported and the other  translated by the non-trivial element
of the centre. Let $\{r,t\}$ be positive integers such that
$\{N_1,N_2\}=\{r(r+1)/2,t(t+1)/2\}$.  If $r$ have $t$ different parity, let
$t$  be the even one. Then we must have $r\equiv t+1\pmod 4$ (for $N_1+N_2$
to  be odd), and we  let $A=(r+t+3)(r+t-1)/16$ and $B=(r-t-1)(r-t+1)/8$. If
$r$  and $t$ have same parity we must  have $r\equiv t+2\pmod 4$ and we let
$A=(r-t-2)(r-t+2)/16$  and  $B=(r+t)(r+t+2)/8$.  The  Lusztig  series  (see
\cite[1.11]{remarks})   is  then  defined  by  $s\in\bG^*$  such  that
$C_{\bG^*}(s)\simeq((\Sp_{2A}\times\Gl_B\times
\Sp_{2A})/(\BZ/2\BZ))\rtimes\BZ/2\BZ$   where  the  nontrivial  element  of
$A_{\bG^*}(s)$  exchanges  the  two  $\Sp$  components and induces the
transpose inverse automorphism of the $\Gl$ component.

\subsection*{Type $D_n$ for  $p=2$}
There is at most one cuspidal character sheaf,  and it occurs when $n$ is
an even square \cite[22.3]{CS}.  The support is unipotent of
Jordan type  $(2,6,10,\ldots,4d-2)$; this  partition  has
$d$  parts   where  $n=d^2$  (see  \cite[3.3  and   1.2e]{LS});  we have
$A_\bG(x_u)\simeq(\BZ/2\BZ)^{d-1}$ and the symbol in the  sense  of
{\sl loc.  cit.}  of the local system  is $(\{0,4,\ldots,4d\},\emptyset)$.
The Lusztig series is $\Qlbar$ (unipotent series) and the label is the
cuspidal symbol of $W(D_n)$ (see \cite[22.7]{CS}).

\subsection*{Type $D_n$ for $p\ne 2$}
It  is useful to  first describe the  cuspidal local systems with unipotent
support. For $D_n^\ad$ there is such a system exactly when $2n$ is a square
and  $n/2$ is even. For $\SO_{2n}$, there an additional system when $2n$ is
a  square  and  $n/2$  is  odd,  with  a  nontrivial central character. For
$\Spin_{2n}=D_n^\sco$  there  is  additionally  a  system  when  $2n$  is a
triangular number, attached to each of the two central characters which are
nontrivial  on an  element of  the kernel  of $\Spin_{2n}\to  \SO_{2n}$. If
$2n=d^2$  the support $x_u$  has Jordan type  $(1,3,5,\ldots, 2d-1)$, we have
$A_{\SO}(x_u)=(\BZ/2\BZ)^{d-1}$  (\cite[10.6]{ICC})  and  the  symbol  in the
sense   of  \cite[13.4]{ICC}   of  the   local  system  is  $(0,2,4,\ldots,
2d-2,\emptyset)$. If $4n=d(d+1)$ with $d$ odd (resp.\ even), the Jordan type
of  the  support  is  $(1,5,9,\ldots,2d-1)$ (resp.\ $(3,7,11,\ldots,2d-1)$),
(see  \cite[4.9]{LS}).  In  these  latter  cases  $A_\bG(x_u)$  is a nonsplit
central extension by $\BZ/2$ of $A_{\SO}(x_u)=(\BZ/2\BZ)^{[\frac{d-1}2]}$ and
the  two cuspidal local systems correspond  to the two characters of degree
$2^{[\frac{d-1}4]}$   of  this  group  (\cite[14.3,  14.4]{ICC})(note  that
$d\equiv  0$ or $3$ $\pmod 4$, thus $[\frac{d-1}2]$ is odd). Note also that
when  $n$ is even  the unipotently supported  cuspidal local system are the
preimage of those of $\frac12\Spin_{2n}$.

\subsubsection*{Case $D_n^\ad=\PSO_{2n}$}

$\PSO_{2n}$ affords cuspidal local systems if and only if $n$ is a multiple
of  $4$ and  $2n=N_1+N_2$ where  $(N_1,N_2)$ is  an unordered  pair of even
squares.  If  $N_1$  or  $N_2$  is  zero  it  affords  one cuspidal system,
otherwise  if  $N_1$  and  $N_2$  are  nonzero  and  distinct there are two
cuspidal  systems. If  $N_1=N_2\neq 0$  there are  4 cuspidal  systems (see
\cite[23.2 (c)]{CS}). These systems all have the same support, the class of
$x$  where $C_\bG(x_s)^\circ\simeq  \SO_{N_1}\times^{\BZ/2\BZ}\SO_{N_2}$. If
$N_1$  or $N_2$  is zero  then $x_s=1$,  otherwise if  $N_1\neq N_2$  we have
$A_\bG(x_s)=\BZ/2\BZ$,  acting by the outer automorphism on both factors, and
if $N_1=N_2\neq 0$ we have $A_\bG(x_s)\simeq\BZ/2\BZ\times\BZ/2\BZ$ where the
second   factor  $\BZ/2\BZ$   exchanges  the   two  $SO_{N_1}$  factors  of
$C_\bG(x_s)^\circ$.

The  Lusztig series  is defined  by $s$  such that $C_{\bG^*}(s)$
affords  the double cover  $\Spin_{2a^2}\times\Spin_{2b^2}$ where $a\geq 0$
and   $b\geq  0$  are   given  by  $N_1=(a+b)^2$   and  $N_2=(a-b)^2$  (see
\cite[1.12]{remarks}  or \cite[23.19 (c$_2$)]{CS}). There  are as many such
semi-simple  classes as cuspidal  local systems, and  each system lies in a
different  series. The label of a cuspidal system is the cuspidal symbol of
the group $W(D_{a^2})\times W(D_{b^2})$.

\subsubsection*{Case $\SO_{2n}$}

$\SO_{2n}$ affords cuspidal local systems if and only if $2n=N_1+N_2$ where
$N_1$  and $N_2$  are even  squares. If  $n\equiv 0  \pmod 4$  the cuspidal
systems  are a preimage from $D_n^\ad$ and have a trivial central character
(using  Lemma  \ref{G  tilde  to  G}).  If  $n\equiv  2\pmod 4$ they have a
nontrivial central character (see \cite[23.2 (d) and 23.19 (d)]{CS}).

A description which covers both cases is as follows:
for each ordered pair $(N_1,N_2)$
there  is a cuspidal system if $N_1=0$  or $N_2=0$ and two cuspidal systems
otherwise. The supports are the class of $x$ such that
$C_\bG(x_s)^\circ\simeq\SO_{N_1}\times\SO_{N_2}$:  there are two such classes
if  $N_1\neq N_2$ (which is always the case if $n\equiv 2\pmod 4$), that we
parameterise repectively by $(N_1,N_2)$ and $(N_2,N_1)$, and only one class
if  $N_1=N_2$. If $N_1$ and $N_2$  are nonzero $A_\bG(x_s)$ is $\BZ/2$ acting
by  the simultaneous exterior automorphism  of both components. If $N_1\neq
N_2$  the translation by  the centre exchanges  $N_1$ and $N_2$ (exchanging
the supports). If $N_1=N_2$, the centre acts trivially (then $n\equiv0\pmod
4$).  We have  $x_s=1$ if  $N_1=0$ and  $x_s$ a  nontrivial central  element if
$N_2=0$.   The  Lusztig   series  is   described  by   $s$  such  that
$C_{\bG^*}^\circ(s)\simeq  \SO_{2a^2}\times\SO_{2b^2}$  where  $a$ and
$b$  are defined by the  same formulae as in  the $\PSO_{2n}$ case excepted
that  $b$ dmay have an arbitrary sign. Note that if $N\equiv 2\pmod 4$, $a$
and  $b$ must be odd. There is two such classes excepted if $a=\pm b$, that
is $N_1=0$ or $N_2=0$. If $a$ and $b$ are nonzero we have
$A_{\bG^*}(s)\simeq  \BZ/2\BZ$  acting  by  the  simultaneous exterior
automorphism of $\SO_{2a^2}$ and $\SO_{2b^2}$.

\subsubsection*{Case $\frac12\Spin_{2n}$}

There  exist cuspidal  systems with  a non-trivial  central character (see
\cite[23.2  (f)]{CS})  if  and  only  if  $n\geq  6$  is  odd,  of the form
$2n=N_1+N_2$  where $N_1$ and  $N_2$ are even  triangular numbers. For each
such  non ordered pair $(N_1,N_2)$ there  are two cuspidal local systems if
$N_1\neq  N_2$ and four  otherwise (see \cite[23.2  (f)]{CS}). If $N_1$ and
$N_2$  are non zero  the systems have  all support the  class of $x$ where
$C_\bG(x_s)^\circ$ is isogenous to $\SO_{N_1}\times\SO_{N_2}$. If $N_1=0$ one
of the two systems has unipotent support and the other is translated by the
nontrivial  element of the centre. In any  case the action of the centre is
free on the 2 or 4 systems. Let $N_1=\frac{r(r+1)}2$ and
$N_2=\frac{t(t+1)}2$ with $r,t\geq 0$. Then $r$ and $t$ are equal to $0$ or
$3 \pmod 4$.

If  $r\not\equiv t\pmod 4$  the Lusztig series  is defined by $s$ such
that  $C_{\bG^*}(s)^\circ$  is  isogenous  to
$$
\SO_{\frac{(r+t+1)^2}8}
\times\Gl_{\frac{(r-t)^2-1}8}\times\SO_{\frac{(r+t+1)^2}8};
$$
if  $r\equiv
t\pmod   4$  the   Lusztig  series   is  defined   by  $s$  such  that
$C_{\bG^*}(s)^\circ$   is  isogenous  to
$$
\SO_{\frac{(r-t)^2}8}\times
\Gl_{\frac{(r+t)(r+t+2)}8}\times\SO_{\frac{(r-t)^2}8}
$$
(see   \cite[23.19
(f)]{CS}). There is a unique such class, with
$A_{\bG^*}(s)\simeq\BZ/2\BZ$   acting   by   interchanging   the   two
$\SO$ components.

The  other cuspidal local systems have a trivial central character and come
from  $\PSO_{2n}$ (we use here Lemma \ref{G tilde to G}). Then $2n=N_1+N_2$
with  $N_1$ and $N_2$ even squares. There  is one cuspidal local system for
each unordered pair such that $N_1\neq N_2$ and two systems when $N_1=N_2$.
The  support is  the class  of $x$  where $C_\bG(x_s)^\circ$ is isogenous to
$\SO_{N_1}\times\SO_{N_2}$. If $N_1$ or $N_2$ is zero then $x_s=1$, otherwise
if  $N_1\neq  N_2$  then  $A_\bG(x_s)=\{1\}$  and  if  $N_1=N_2\neq  0$  then
$A_\bG(x_s)\simeq\BZ/2\BZ$  acting by the exchange  of the two $\SO$ factors.
The  Lusztig series is defined by $s$ such that $C_{\bG^*}(s)$ is
isogenous  to $\SO_{2a^2}\times\SO_{2b^2}$ where $a$ and $b$ are defined as
in  the $\PSO$  case; there  is a  unique such  class. If  $a$ and  $b$ are
distinct   (equivalently  $N_1$  and  $N_2$   non  zero),  the  centraliser
$C_{\bG^*}(s)$  is  connected.  If  $N_1=0$  (that is $a=b$) the group
$A_{\bG^*}(s)$ is $\BZ/2$, acting by exchanging the two components.

\subsubsection*{Case $\Spin_{2n}$}

There are cuspidal local systems coming from $\SO_{2n}$ or $\PSO_{2n}$ (see
Lemma  \ref{G tilde to G}) only if  $n$ is even. As above, let $2n=N_1+N_2$
where  $N_1$ and $N_2$ are even  squares. For each ordered pair $(N_1,N_2)$
with  nonzero $N_1$ and $N_2$, there is  a unique cuspidal local system. It
comes  from  $\PSO_{2n}$  if  $n\equiv  0\pmod  4$  and  from $\SO_{2n}$ if
$n\equiv  2\pmod 4$. If $N_1=0$  or if $N_2=0$ there  are two systems, each
with  the semisimple  part of  the support  central. The  lusztig series is
defined by $s$ such that
$C_{\bG^*}^\circ(s)\simeq\SO_{2a^2}\times^{\BZ/2\BZ}\SO_{2b^2}$, where
$N_1=(a+b)^2$  and $N_2=(a-b)^2$ with $a\in\BN  0$ and $b\in\BZ$. The group
$A_{\bG^*}(s)$  is trivial  if $N_1=N_2$,  is of  order 2 if $N_1$ and
$N_2$ are distinct nonzero and is of order 4 if $N_1$ or $N_2$ is zero.

We  now  look  at  systems  coming  from  $\frac12\Spin_{2n}$  and not from
$\PSO_{2n}$  (see Lemma  \ref{G tilde  to G}):  they exist  if $n\geq 6$ is
even,  of the form  $2n=N_1+N_2$ where $N_1$  and $N_2$ are even triangular
numbers. For each ordered pair $(N_1,N_2)$, if $N_1$ and $N_2$ are non zero
there  are two cuspidal  local system with  support the class  of $x$ such
that  $C_\bG(x_s)$ affords $\Spin_{N_1}\times\Spin_{N_2}$  as a double cover;
there  are two such classes if $N_1\neq N_2$ and only one if $N_1=N_2$. The
cases  $N_1=0$  and  $N_2=0$  correspond  to  4  cuspidal local systems with
semisimple  part of  the support  each one  of the  4 central elements. The
Lusztig  series is defined  by $s$ such  that, if $N_1=\frac{r(r+1)}2$
and      $N_2=\frac{t(t+1)}2$,      then     $C_{\bG^*}(s)^\circ\simeq
(\SO_{\frac{(r+t+1)^2}8}\times\Gl_{\frac{(r-t)^2-1}8}
\times\SO_{\frac{(r+t+1)^2}8})/\{\pm1\}$  or  $(\SO_{\frac{(r-t)^2}8}\times
\Gl_{\frac{(r+t)(r+t+2)}8}\times\SO_{\frac{(r-t)^2}8}) /\{\pm1\}$ depending
on  the values of $r$ and $t\pmod 4$ (as in the case of $\frac12\Spin$). If
$N_1\neq  N_2$ there is a single  such class; the group $A_{\bG^*}(s)$
is  noncyclic of order 4  generated by two elements  $x$ and $x'$ where $x$
acts  by the simultaneous  outer automorphism of  both $\SO$ components and
$x'$  acts  by  exchanging  the  two  $\SO$  components and doing the outer
automorphism  on  the  $\Gl$  component.  If  $N_1=N_2$, there are two such
classes;  the group $A_{\bG^*}(s)$  is of order  2 acting by the outer
automorphism of $\Gl$.

Finally  we look at  the cuspidal local  systems whose central character is
injective  on $Z\bG$ (see  \cite[23.2 (e)]{CS}): they  exist only if $n$ is
odd,  in which case $Z\bG$ has two  injective characters. For each of these
and  for each ordered pair $(N_1,N_2)$ of even triangular numbers such that
$2n=N_1+N_2$  there are two cuspidal local systems (note that $N_1\neq N_2$
and that $N_1$ and $N_2$ are nonconsecutive triangular numbers since $n$ is
odd).  The 4 complexes attached to $(N_1,N_2)$ and $(N_2,N_1)$ are obtained
from  each other by translating  by the various elements  of the centre. If
$N_1>0$  and $N_2>0$ the  two systems attached  to $(N_1,N_2)$ have support
the class of $x$ such that $C_\bG(x_s)$ affords
$\Spin_{N_1}\times\Spin_{N_2}$  as a  double cover.  If $N_1=0$  one of the
systems  has  unipotent  support,  the  other  as  well  as the two systems
parameterised by $(N_2,0)$ are deduced by translation by the centre.

The  Lusztig series of a cuspidal local system parameterised by $(N_1,N_2)$
is   defined   by   $s$   such   that,   if  $N_1=\frac{r(r+1)}2$  and
$N_2=\frac{t(t+1)}2$, then $C_{\bG^*}(s)^\circ\simeq
(\SO_{\frac{(r+t+1)^2}8}\times\Gl_{\frac{(r-t)^2-1}8}
\times\SO_{\frac{(r+t+1)^2}8})/\{\pm1\}$  or  $(\SO_{\frac{(r-t)^2}8}\times
\Gl_{\frac{(r+t)(r+t+2)}8}\times\SO_{\frac{(r-t)^2}8}) /\{\pm1\}$ depending
on  the  values  of  $r$  and  $t\pmod  4$  as  in  the  previous case (see
\cite[1.12]{remarks} or \cite[23.19 (e$_2$)]{CS}; note that $r\neq t$ since
$N_1\neq  N_2$  and  that  $|r-t|\neq  1  $  since  $N_1$ and $N_2$ are not
consecutive). We have $A_{\bG^*}(s)\simeq\BZ/4\BZ$ where the generator
acts  by  exchanging  the  two  $\SO$  components  and  twisting  the $\Gl$
components,  its square twisting the two $\SO$ components.

\section{Exceptional groups}\label{s:classex}
In  the following tables the labels for  the cuspidal local systems are, as
in  \cite{CS}, elements of  $\CM(\CG)$ for some  groups $\CG$. We fix
notations  for  the  labels  $(x,\chi)$  in  these  groups  as  follows: in
$\Sgot_n$  with  $n=2,3,4,5$,  we  let  $g_i$  for  $i=2,3,4,5$  denote  an
$i$-cycle;  $g'_2$ denotes the product  of two commuting transpositions and
$g_6$ is an element of order 6 of $\Sgot_5$. We denote by $\theta,\theta^2$
(resp.\  $i,-i$,  resp.\  $-\theta,-\theta^2$)  the  injective  characters of
$\BZ/3\BZ$    (resp.\   $\BZ/4\BZ$,   resp.\   $\BZ/6\BZ$).   Finally,   when
$C_\CG(x)$   is  a  Coxeter  group  (which  for  instance  happens  when
$\CG=\Sgot_4$  and $x\in\{1,g_2,g'_2\}$) we  denote by $\ve$ the
sign character of this group.

The  semi-simple part of the  support of the cuspidal  local system will be
specified  by giving  the isomorphism  type of  its centraliser  (if needed
stating  also  the  number  of  such  conjugacy  classes) or by an explicit
description.  In the case when this centraliser is a product of quasisimple
groups  with cyclic center, amalgamated by  part of their center, we denote
$z_i$  a generator of the center of  the $i$-th factor in order to describe
the amalgamation.

It  is proved in \cite{Lu12} that  all cuspidal local systems are ``clean''
in  the  sense  of  Lusztig.  In  bad  characteristic we could not find the
classification  in the litterature but the reader can check that our tables
are complete by using the knowledge of unipotently supported cuspidal local
systems, of isolated semi-simple elements and the argument
\cite[2.10.1]{ICC} of Lusztig.

The  parameterisation by  labels in  families of  character sheaves has been
worked  out in \cite{CS}, \cite{Shoji1} ar  \cite{Shoji2} except in a few
cases  marked by ``??''  in the tables.  We have completed this parameterisation
so that in every case we have:

\begin{property}
{\rm  (see \cite[6.2]{Shoji1},  \cite[4.6]{Shoji2} and  \cite{Os})} The
eigenvalue  of  Shintani  (``twisting''  operator  of  Shoji) on the local
system parameterised by $(x,\chi)$ is $\chi(x)/\chi(1)$.
\end{property}
In  the cases  marked ``??'',  it is  unknown whether the multiplicity property
given in \cite[17.8.3]{CS} holds.
\vfill\eject
\begin{sideways}
\begin{minipage}{\textheight}
\subsection*{Cuspidal local systems for type $G_2$}
The references are \cite[20.6]{CS} when the characteristic is good and
\cite[7.2--7.6]{Shoji1} when characteristic is  $2$ or $3$.

All  cuspidal  local  systems  are  in  the unipotent Lusztig series. their
labels  are in $\CM(\CG)$ with  $\CG=\Sgot_3$. The unipotent class of
$C_G(x_s)$ is always the subregular class of the whole group.

\hfill\break\vskip 1cm

\begin{tabular}{|l|c|c|c|c|c|c|}
\hline
Label & Conditions & $C_G(x_s)$ or $x_s$&
\begin{tabular}{c}Unipotent class\\in $C_G(x_s)$\end{tabular}
&$A_G(x)$& Local system & Eigenvalue of $\Sh$ \\
\hline
$(g_2,\ve)$&
\begin{tabular}{c}$p\neq 2$\\$p=2$\end{tabular}&
\begin{tabular}{c}$\SL_2\times^{\BZ/2\BZ} \SL_2$\\ $x_s=1$\end{tabular}
& $\reg$&$\BZ/2\BZ$&$\ve$&$-1$\\
\hline
$(g_3,\theta^i)$, $i=1,2$&
\begin{tabular}{c} $p\neq 3$\\$p=3$\end{tabular}
&
\begin{tabular}{c}$\SL_3$\\$x_s=1$\end{tabular}
&$\reg$&$\BZ/3\BZ$&$\theta^i$&$\theta^i$\\
\hline
$(1,\ve)$&
\begin{tabular}{c}$ p\neq 3$\\$p=3$\end{tabular}
&$x_s=1$& subregular&
\begin{tabular}{c}$\Sgot_3$\\$\BZ/2\BZ$\end{tabular}&$\ve$&$1$\\
\hline
\end{tabular}
\end{minipage}
\end{sideways}

\newcommand\stack[2]{{\genfrac{}{}{0pt}{0}{#1}{#2}}}

\begin{sideways}
\begin{minipage}{\textheight}
\subsection*{Cuspidal local systems for type $E_6^\ad$}
The  reference  is  \cite[20.3  (a)]{CS}.  When the characteristic is good,
cuspidal  local systems are in three Lusztig series which correspond to the
3   central  elements  $z^i$,  $i=0,1,2$,  of  the  dual  group.  When  the
characteristic  is  3,  all  cuspidal  local  systems  are in the unipotent
Lusztig  series. The labels are in $\CM(\CG)$ with $\CG=\Sgot_3$. The
$i$  in the table corresponds  to the series $z^i$.  The unipotent class of
$C_G(x_s)$  in good characteristic is the  class denoted by $D_4(A_1)$, whose
weighted diagram is $\begin{smallmatrix}
&&0&&\\0&0&2&0&0\end{smallmatrix}$.

\hfill\break\vskip 1cm
\newsavebox{\firsttablebox}
\begin{lrbox}{\firsttablebox}
\begin{tabular}{|l|c|c|c|c|c|c|}
\hline
Label & Conditions &$C_G(x_s)$ or $x_s$&
\begin{tabular}{c}Unipotent class\\in $C_G(x_s)$\end{tabular}
&$A_G(x)$& Local system&
\begin{tabular}{c}Eigenvalue\\of $\Sh$\end{tabular}
\\
\hline
$\stack{(g_3,\theta^j),}{j=1,2}$&
\begin{tabular}{c}$ p\neq 3$\\ \\$p=3$\end{tabular}&
\begin{tabular}{c}
$\dfrac{(\SL_3)^3}{z_1=z_2=z_3}\rtimes\BZ/3\BZ$\\($\BZ/3\BZ$ permutes
cyclically\\ the components)\\$x_s=1$\end{tabular}&
$\reg$&
\begin{tabular}{c}$\stack{(\BZ/3\BZ)^2}{=<x>\times\BZ/3\BZ}$\\ \\$\BZ/3\BZ$\end{tabular}&
\begin{tabular}{c}$\theta^j\boxtimes \theta^i$\\ \\$\theta^j$\end{tabular}
&$\theta^j$\\
\hline
\end{tabular}
\end{lrbox}
\scalebox{0.9}{\usebox{\firsttablebox}}
\end{minipage}
\end{sideways}

\begin{sideways}
\begin{minipage}{\textheight}
\subsection*{Cuspidal local systems for type $E_6^\sco$}
The  references are \cite[20.3  (b)]{CS} if $p\neq  2$ and \cite{Spalt} for
$p=2$. We assume $p\neq 3$, otherwise the classification is the same as for
the adjoint type.

\subsubsection*{Unipotent Lusztig series}

There  are two  cuspidal local  systems in  the unipotent  series. They are
lifted   from  the  adjoint  group;   the  labels  are  $(g_3,\theta)$  and
$(g_3,\theta^2)$,  (in $\CM(\CG)$ with  $\CG=\Sgot_3$). All the lifts
of  the $x$  of $E_6^\ad$  are conjugate.  Let $\tilde  x$ be  one of these
lifts;   we  have   $A_\bG(\tilde  x)\simeq(\BZ/3\BZ)^2=<\tilde  x>\times
Z_\bG$.

\subsubsection*{Non unipotent Lusztig series}
The  other cuspidal  systems are  in the  Lusztig series corresponding to a
semi-simple  element of the dual group  whose centraliser has its connected
component  of type $D_4$ and component  group $\BZ/3\BZ$; the labels are in
$\CM(\CG)$  with $\CG=\Omega\times\BZ/2\BZ$, where  $\BZ/2\BZ$ is the
group $\CG$ of a family of $D_4$ and $\Omega$ is a cyclic group of order
3. When the characteristic is good, the unipotent class is always the class
$A_5+A_1$  in the notation  of \cite{S}, which  is denoted by $E_6(a_3)$ in
\cite{C}; its weighted diagram is $\begin{smallmatrix}
&&0&&\\2&0&2&0&2\end{smallmatrix}$.  The  parameterisation  is unknown when
the  characteristic is 2. It is known up to swapping two lines in the table
if $p\neq 2$. The parameterisation that we give satisfies property ($*$) of
the beginning of this section.

\hfill\break\vskip 1cm
\newsavebox{\secondtablebox}
\begin{lrbox}{\secondtablebox}
\begin{tabular}{|l|c|c|c|c|c|c|}
\hline
Label
& Conditions & $C_G(x_s)$ or $x_s$&
\begin{tabular}{c}Unipotent class\\in $C_G(x_s)$\end{tabular}
&$A_G(x)$&
\begin{tabular}{c}Local\\system\end{tabular}&
\begin{tabular}{c}Eigenvalue\\of $\Sh$\end{tabular}\\
\hline
\begin{tabular}{c}
$(-\omega^j,\theta^i\boxtimes\ve)$ (??)\\
$j=1,2$;$i=0,1,2$\end{tabular}&
\begin{tabular}{c}$p\neq2$\\ \\$p=2$\end{tabular}&
\begin{tabular}{c}
$\SL_2\times^{\BZ/2\BZ}\SL_6$\\
($x_s=s_0z^i$ with $s_0$ of order 2)\\$x_s=z^i$\end{tabular}&
$\reg$ &$\stack{\BZ/6\BZ}{=<s_0z>}$&$-\theta^j$&$-\theta^{ij}$\\
\hline
\begin{tabular}{c}$(\omega^j,\theta^i\boxtimes\ve)$ (??)\\
$j=1,2$;$i=0,1,2$\\
\end{tabular}&
&
$x_s=z^i$&
\begin{tabular}{c}\cite{S}: $A_5+A_1$,\\ \cite{C}: $E_6(a_3)$\end{tabular}
&$\BZ/6\BZ$&$-\theta^j$&$\theta^{ij}$\\
\hline
\end{tabular}
\end{lrbox}
\scalebox{0.85}{\usebox{\secondtablebox}} 
\end{minipage}
\end{sideways}

\begin{sideways}
\begin{minipage}{\textheight}
\subsection*{Cuspidal local systems for type $E_7^\ad$.}
The reference is \cite[20.3 (c)]{CS}.

When the characteristic is good the cuspidal local systems are in 2 Lusztig
series  corresponding to the 2 central  elements $z^k$, $k=0,1$ of the dual
group.  When the characteristic is 2 they  are all in the unipotent Lusztig
series.  They are  in the  exceptional family  of the Weyl group $W(E_7)$.
When  the characteristic  is good  the unipotent  class is always the class
whose weighted diagram is
$\begin{smallmatrix}&&0&&&\\1&0&1&0&1&0\end{smallmatrix}$.  The $k$  in the
table corresponds to the series $z^k$.

\hfill\break\vskip 1cm
\newsavebox{\thirdtablebox}
\begin{lrbox}{\thirdtablebox}
\begin{tabular}{|l|c|c|c|c|c|c|}
\hline
Label & Conditions & $C_G(x_s)$ or $x_s$&
\begin{tabular}{c}Unipotent class \\in $C_G(x_s)$\end{tabular}
&$A_G(x)$&
\begin{tabular}{c}Local\\system\end{tabular}&
\begin{tabular}{c}Eigenvalue\\of $\Sh$\end{tabular}\\
\hline
$\stack{(-1,(-1)^j),}{j=0,1}$&
\begin{tabular}{c}$p\neq 2$\\ \\$p=2$\end{tabular}&
\begin{tabular}{c}
$\dfrac{\SL_4^2\times\SL_2}{z_1=z_2,\,z_1^2=z_3}\rtimes\BZ/2\BZ$
\\($\BZ/2\BZ$ swaps the two $\SL_4$)\\ $x_s=1$
\end{tabular}&
\begin{tabular}{c}$\reg\times\reg\times\reg$\\ \\$\reg$\end{tabular}&
\begin{tabular}{c}$\BZ/4\BZ\times\BZ/2\BZ$\\$=<x>\times\BZ/2\BZ$ \\$\BZ/4\BZ$\end{tabular}&
\begin{tabular}{c}$i^{2j+1}\boxtimes \ve^k$\\ \\
$i^{2j+1}$\end{tabular}&$(-1)^j$\\
\hline
\end{tabular}
\end{lrbox}
\scalebox{0.85}{\usebox{\thirdtablebox}} 
\end{minipage}
\end{sideways}

\begin{sideways}
\begin{minipage}{\textheight}
\subsection*{Cuspidal local systems for type $E_7^\sco$.}
We  assume $p\neq 2$, otherwise  the classification is the  same as in the
adjoint  group.  The  references  are  \cite[20.5]{CS}  for  $p\ne  3$  and
\cite{Spalt} for $p=3$.

\subsubsection*{Unipotent Lusztig series}
There  are 2 cuspidal local systems lifted from $E_7^\ad$; they are both in
the  unipotent Lusztig series. The two lifts  $\tilde x$ and $\tilde xz$ of
the $x$ of $E_7^\ad$ are conjugate. We have $A_\bG(\tilde
x)=\BZ/4\BZ\times\BZ/2\BZ=<\tilde x>\times Z_\bG$.

\subsubsection*{Non unipotent Lusztig series}
The other cuspidal local systems are in the Lusztig series corresponding to
an isolated semi-simple element of the dual group whose centraliser has its
connected  component of type $E_6$  and component group $\Omega$ isomorphic
to $\BZ/2\BZ$, acting as the outer automorphism of $E_6$. The labels of the
cuspidal local systems are in $\CM(\CG)$ where
$\CG=\Omega\times\Sgot_3$. When the characteristic is good the unipotent
class   is  always  $E_7(a_5)$  in  the  notation  of  \cite{C},  which  is
$u=D_6(a_2)+A_1$  in  the  notation  of  \cite{S};  its weighted diagram is
$\begin{smallmatrix}&&0&&&\\  0&0&2&0&0&2\end{smallmatrix}$.  We  denote by
$1$  and  $\omega$  the  elements  of  $\Omega$  and by $z$ the non-trivial
central element.

\hfill\break\vskip 1cm
\newsavebox{\fourthtablebox}
\begin{lrbox}{\fourthtablebox}
\begin{tabular}{|l|c|c|c|c|c|c|}
\hline
Label& Conditions & $C_G(x_s)$ or $x_s$&
\begin{tabular}{c}Unipotent class \\in $C_G(x_s)$\end{tabular}
&$A_G(x)$&
\begin{tabular}{c}Local\\system\end{tabular}&
\begin{tabular}{c}Eigenvalue\\of $\Sh$\end{tabular}\\
\hline
\begin{tabular}{c}$(\omega,\ve^i\boxtimes1),$\\$i=0,1$\\ (?? if $p=3$)
\end{tabular}&
&$x_s=z^i$&$D_6(a_2)+A_1$&$\Sgot_3\times\BZ/2\BZ$&
$\ve\boxtimes\ve$&$(-1)^i$\\
\hline
\begin{tabular}{c}$(\omega g_3,\ve^i\boxtimes \theta^j)$\\
$i=0,1$; $j=1,2$\\
(?? if $p=3$)
\end{tabular}&
\begin{tabular}{c}$p\neq 3$\\ \\$p=3$\end{tabular}&
\begin{tabular}{c}$\SL_6\times^{\BZ/3\BZ}\SL_3$\\
($x_s=s_0z^i$ with $s_0$ of order 3)\\
$x_s=z^i$\end{tabular}&
\begin{tabular}{c} $\reg\times\reg$\\ \\$\reg$\end{tabular}&
$\BZ/6\BZ$&$-\theta^j$&$(-1)^i\theta^j$\\
\hline
\end{tabular}
\end{lrbox}
\scalebox{0.85}{\usebox{\fourthtablebox}} 
\end{minipage}
\end{sideways}

\begin{sideways}
\begin{minipage}{\textheight}
\subsection*{Cuspidal local systems for type $E_8$}
The   references   are   \cite[21.12]{CS}   for   good  characteristic  and
\cite[section 5]{Shoji2} for bad characteristic. All  cuspidal local systems, for all  characteristics, are in the unipotent
Lusztig  series. The labels are  in $\CM(\Sgot_5)$. When the characteristic
is  good the unipotent class is, as observed by Lusztig in \cite[4.7
(a)]{Arcata} always the class $E_8(a_7)$ in the notation
of  \cite{C} ($2A_4$ in the notation  of \cite{S}).
\bigskip
\newsavebox{\fifthtablebox}
\begin{lrbox}{\fifthtablebox}
\begin{tabular}{|l|c|c|c|c|c|c|}
\hline
Label& Conditions & $C_G(x_s)$ or $x_s$& \begin{tabular}{c} Unipotent class\\
in $C_G(x_s)$\end{tabular}&
$A_G(x)$&
\begin{tabular}{c}Local\\system\end{tabular}&
\begin{tabular}{c}Eigenvalue\\of $\Sh$\end{tabular}\\
\hline
$(1,\ve)$& &$x_s=1$&
\begin{tabular}{c}\cite{S}: $2A_4$,\\\cite{C}: $E_8(a_7)$,\\
$\begin{smallmatrix}&&0&&&&\\0&0&0&2&0&0&0\end{smallmatrix}$\end{tabular}
&
$\Sgot_5$&$\ve$&$1$\\
\hline
$(g_2,-\ve)$&
\begin{tabular}{c}$p\neq 2$\\$p=2$\\ \\ \\\end{tabular}&
\begin{tabular}{c}$\SL_2\times E_7^\sco$\\$x_s=1$\\ \\ \\\end{tabular}&
\begin{tabular}{c}$\text{reg}\times(D_6(a_2)+A_1)$\\
{}\cite{S}: $E_7(a_2)+A_1$, \\
\cite{C}: $E_7(a_5)$\\
$\begin{smallmatrix}&&0&&&&\\0&0&2&0&0&2&2\end{smallmatrix}$\end{tabular}&
$\Sgot_3\times\BZ/2\BZ$&$\ve\boxtimes\ve$&$-1$\\
\hline
$(g_3,\ve\theta^j)$, $j=1,2$&
\begin{tabular}{c}$p\neq 3$\\$p=3$\end{tabular}&
$\SL_3\times E_6^\sco$&
\begin{tabular}{c}$\reg\times(A_5+A_1)$\\
$\reg\times\reg$
$\begin{smallmatrix}&&0&&&&\\0&0&2&0&2&2&2\end{smallmatrix}$\end{tabular}&
\begin{tabular}{c}$\BZ/6\BZ$\\$\BZ/3\BZ$\end{tabular}&
\begin{tabular}{c}$-\theta^j$\\$\theta^j$\end{tabular}&$\theta^j$\\
\hline
$(g_4,i^{2k+1})$, $k=0,1$&
\begin{tabular}{c}$p\neq 2$\\$p=2$\end{tabular}&
\begin{tabular}{c}
$\Spin_{10}\times^{\BZ/4\BZ}\SL_4$\\$x_s=1$
\end{tabular}&
\begin{tabular}{c}$(3,7)\times\reg$\\$E_8(a_1)$
$\begin{smallmatrix}&&2&&&&\\2&2&0&2&2&2&2\end{smallmatrix}$\end{tabular}&
$\BZ/4\BZ$&$i^{2k+1}$&$i^{2k+1}$\\
\hline
$(g_5,\zeta^j)$, $j=1,2,3,4$&
\begin{tabular}{c}$p\neq 5$\\$p=5$\end{tabular}&
\begin{tabular}{c}$\SL_5\times^{\BZ/5\BZ}\SL_5$\\$x_s=1$\end{tabular}&
$\reg$&$\BZ/5\BZ$&$\zeta^j$&$\zeta^j$\\
\hline
$(g_6,\ve\theta^j)$, $j=1,2$&
\begin{tabular}{c}$p\neq 2,3$\\ \\$p=2$\\$p=3$\\ \\ \\\end{tabular}&
\begin{tabular}{c}
$\dfrac{(\SL_2\times \SL_3)\times^{\BZ/6\BZ}\SL_6}
{z_3^2=z_2,\, z_3^3=z_1}$ \\
$\SL_2\times \SL_3\times^{\BZ/3\BZ}\SL_6$\\
$x_s=1$\\ \\ \\
\end{tabular}&
\begin{tabular}{c} $\reg$\\ \\
$\reg$\\{}\cite{S}: $ E_7+A_1$,\\ \cite{C}: $E_8(a_3)$\\
$\begin{smallmatrix}&&0&&&&\\2&0&2&0&2&2&2\end{smallmatrix}$ \end{tabular}&
$\BZ/6\BZ$&$-\theta^j$&$-\theta^j$\\
\hline
$(g'_2,\ve)$&
\begin{tabular}{c} $p\neq 2$\\$p=2$\\ \\ \\ \end{tabular}&
\begin{tabular}{c}
$\Spin_{16}$\\$x_s=1$\\ \\ \\ \end{tabular}&
\begin{tabular}{c} $ (1,3,5,7)$\\{}\cite{S}: $D_8(a_1)$, \\
\cite{C}: $E_8(a_5)$\\
$\begin{smallmatrix}&&0&&&&\\2&0&2&0&0&2&0\end{smallmatrix}$ \end{tabular}&
$W(B_2)$&
$\ve$&$1$\\
\hline
\end{tabular}
\end{lrbox}
\scalebox{0.85}{\usebox{\fifthtablebox}} 
\end{minipage}
\end{sideways}

\begin{sideways}
\begin{minipage}{\textheight}
\subsection*{Cuspidal local systems for type $F_4$}
The  references are \cite[21.3, 21.13]{CS}  when the characteristic is good
and \cite[7.2--7.6]{Shoji1} for $p=2,3$.

All  cuspidal local  systems are  in the  unipotent Lusztig  series. In all
characteristic the labels are in $\CM(\Sgot_4)$. When the characteristic is
good  the unipotent class of $C_\bG(x_s)$ is in $\bG$ the class $F_4(a_3)$.

\hfill\break\vskip 1cm
\newsavebox{\sixthtablebox}
\begin{lrbox}{\sixthtablebox}
\begin{tabular}{|l|c|c|c|c|c|c|}
\hline
Label& Conditions & $C_G(x_s)$ or $x_s$&  \begin{tabular}{c}Unipotent
class\\in $C_G(x_s)$\end{tabular}
&$A_G(x)$& Local system&
\begin{tabular}{c}Eigenvalue\\of $\Sh$\end{tabular}\\
\hline
$(1,\ve)$&
\begin{tabular}{c} $p\neq 2$\\$p=2$ \end{tabular}&
$x_s=1$&
$F_4(a_3)$,
$\begin{smallmatrix}0&2&0&0&\end{smallmatrix}$
&
\begin{tabular}{c}$\Sgot_4$ \\ $\Sgot_3$\end{tabular}&
$\ve$&$1$\\
\hline
$(g_2,\ve)$&
\begin{tabular}{c}$p\neq 2$\\$p=2$\end{tabular}&
\begin{tabular}{c}$ \Sp_6\times^{\BZ/2\BZ}\SL_2$\\$x_s=1$\end{tabular}&
\begin{tabular}{c}$(2,4)\times\reg$, $\begin{smallmatrix} 2&0&2\end{smallmatrix}$
\\subregular $F_4(a_1)$
$\begin{smallmatrix} 2&2&0&2\end{smallmatrix}$\end{tabular}& $\BZ/2\BZ$
& $\ve$ &$-1$ \\
\hline

$(g'_2,\ve)$&
\begin{tabular}{c} $p\neq 2$\\$p=2$ \end{tabular}&
\begin{tabular}{c}$ \Spin_9$\\$x_s=1$\end{tabular}&
\begin{tabular}{c}$(1,3,5)$, $\begin{smallmatrix} 0&2&0&2\end{smallmatrix}$
\\$F_4(a_2)$, $\begin{smallmatrix} 0&2&0&2\end{smallmatrix}$\end{tabular}&
$W(B_2)$&
\begin{tabular}{c}$\ve$, coming from $\SO_9 $\\
$\ve$\end{tabular}&$1$\\
\hline
$\stack{(g_3,\theta^j), }{j=1,2}$&
\begin{tabular}{c} $p\neq 3$\\$p=3$ \end{tabular}&
\begin{tabular}{c}$ \SL_3\times^{\BZ/3\BZ}\SL_3$\\$x_s=1$\end{tabular}&
\begin{tabular}{c}$\reg\times\reg$\\ $\reg$\end{tabular}&
$\BZ/3\BZ$&$\theta^j$&$\theta^j$\\
\hline
$\stack{(g_4,i^{2k+1}), }{k=0,1}$&
\begin{tabular}{c} $p\neq 2$\\$p=2$ \end{tabular}&
\begin{tabular}{c}$ \SL_4\times^{\BZ/2\BZ}\SL_2$\\$x_s=1$\end{tabular}&
\begin{tabular}{c}$\reg\times\reg$\\ $\reg$\end{tabular}&
$\BZ/4\BZ$&$i^{2k+1}$&$i^{2k+1}$\\
\hline
\end{tabular}
\end{lrbox}
\scalebox{0.9}{\usebox{\sixthtablebox}} 
\end{minipage}
\end{sideways}

\bigskip

\noindent Fran\c cois~Digne\\
Laboratoire Ami\'enois de Math\'ematique Fondamentale et Appliqu\'ee,
CNRS UMR 7352, Universit\'e de Picardie-Jules Verne,
80039 Amiens Cedex France.\\
E-mail: digne@u-picardie.fr\\

\medskip

\noindent Gustav~Lehrer\\
School of Mathematics and Statistics,
University of Sydney, NSW 2006, Australia.\\
E-mail: gustav.lehrer@sydney.edu.au\\

\medskip

\noindent Jean~Michel\\
Institut de Math\'ematiques de Jussieu -- Paris rive
gauche, Universit\'e Denis Diderot, B\^atiment Sophie Germain,
75013, Paris France.\\
E-mail: jean.michel@imj-prg.fr

\end{document}